\theoremstyle{definition}
\newtheorem{thm}{Theorem}
\newtheorem{lem}[thm]{Lemma}
\newtheorem{cor}[thm]{Corollary}
\newtheorem{exa}[thm]{Example}
\newtheorem{dfn}[thm]{Definition}
\newtheorem{rem}[thm]{Remark}
\newtheorem{que}[thm]{Question}
\newtheorem{fac}[thm]{Fact}
\newtheorem{con}[thm]{Construction}
\newtheorem{ass}[thm]{Assumption}
\newtheorem*{acknowledgements}{Acknowledgements}
\newcounter{enuAlph}
\newtheorem{teorema}[enuAlph]{Theorem}
\newtheorem{dfnfac}[thm]{Definition and Fact}
\numberwithin{thm}{section} 
\numberwithin{equation}{section}
\renewcommand{\a}{\alpha}
\renewcommand{\b}{\beta}
\newcommand{\p}{\mathbb{P}}
\newcommand{\q}{\mathbb{Q}}
\newcommand{\br}{\mathbb{R}}
\newcommand{\qd}{\dot{\mathbb{Q}}}
\DeclareMathAlphabet{\mymathbb}{U}{BOONDOX-ds}{m}{n}
\newcommand{\zero}{\mymathbb{0}}
\newcommand{\oo}{{\omega}^\omega}
\newcommand{\ooo}{[{\omega}]^\omega}
\newcommand{\sq}{2^{<\omega}}
\newcommand{\on}{\mathpunct{\upharpoonright}}
\newcommand{\bb}{\mathfrak{b}}
\newcommand{\cc}{\mathfrak{c}}
\newcommand{\dd}{\mathfrak{d}}
\newcommand{\ee}{\mathfrak{e}}
\newcommand{\pr}{\mathfrak{pr}}
\newcommand{\m}{\mathcal{M}}
\newcommand{\n}{\mathcal{N}}
\DeclareMathOperator{\non}{non}
\DeclareMathOperator{\cov}{cov}
\DeclareMathOperator{\add}{add}
\DeclareMathOperator{\cof}{cof}
\newcommand{\covm}{\cov(\mathcal{M})}
\newcommand{\nonm}{\non(\mathcal{M})}
\newcommand{\addm}{\add(\mathcal{M})}
\newcommand{\cofm}{\cof(\mathcal{M})}
\newcommand{\covn}{\cov(\mathcal{N})}
\newcommand{\nonn}{\non(\mathcal{N})}
\newcommand{\addn}{\add(\mathcal{N})}
\newcommand{\cofn}{\cof(\mathcal{N})}
\newcommand{\covi}{\cov(I)}
\newcommand{\noni}{\non(I)}
\newcommand{\addi}{\add(I)}
\newcommand{\cofi}{\cof(I)}
\newcommand{\R}{\mathbf{R}}
\renewcommand{\lq}{\preceq_T}
\DeclareMathOperator{\dom}{dom}
\DeclareMathOperator{\ran}{ran}
\DeclareMathOperator{\cf}{cf}
\newcommand{\eeb}{\mathfrak{e}^*}
\newcommand{\preb}{\mathfrak{pr}^*}
\newcommand{\prp}{\mathbb{PR}}
\newcommand{\pst}{\p}
\newcommand{\forces}{\Vdash}
\renewcommand{\p@enumi}{}
\DeclareMathOperator{\pf}{pf}
\newcommand{\bari}{\bar{I}=\langle I_k\rangle_{k<\omega}}
\newcommand{\lebb}{\mathbb{LE}}
\newcommand{\din}{d^{-1}(\{1\})}
\newcommand{\newd}{(d^\prime)^{-1}(\{1\})\setminus d^{-1}(\{1\})}
\DeclareMathOperator{\New}{New}
\newcommand{\ufi}{\mathrm{UFI}}
\newcommand{\ufic}{\mathrm{UFIC}}
\DeclareMathOperator{\dns}{\dot{d}}
\newcommand{\lebf}{\mathbf{LE}}
\newcommand{\ind}{i}
\newcommand{\none}{\non(\mathcal{E})}
\newcommand{\cove}{\cov(\mathcal{E})}
\newcommand{\adde}{\add(\mathcal{E})}
\newcommand{\cofe}{\cof(\mathcal{E})}
\DeclareMathOperator{\Pred}{Pred}
\newcommand{\prbf}{\mathbf{PR}}
\newcommand{\inG}{\dot{L}}
\title{Cicho\'n's maximum with cardinals of the closed null ideal}%
\author{Takashi Yamazoe}
\address{Graduate School of System Informatics, Kobe University,
	Rokko--dai 1--1, Nada--ku, 657--8501 Kobe, Japan}
\email{212x502x@cloud.kobe-u.jp}
\begin{document}
	\baselineskip=17pt
	
	\setlength{\abovedisplayskip}{5pt} 
	\setlength{\belowdisplayskip}{5pt}

	\begin{abstract}
		Let $\mathcal{E}$ denote the $\sigma$-ideal generated by closed null sets on the reals. We show that the uniformity and the covering of $\mathcal{E}$ can be added to Cicho\'n's maximum with distinct values. More specifically, it is consistent that $\aleph_1<\mathrm{add}(\mathcal{N})<\mathrm{cov}(\mathcal{N})<\mathfrak{b}<\mathrm{non}(\mathcal{E})<\mathrm{non}(\mathcal{M})<\mathrm{cov}(\mathcal{M})<\mathrm{cov}(\mathcal{E})<\mathfrak{d}<\mathrm{non}(\mathcal{N})<\mathrm{cof}(\mathcal{N})<2^{\aleph_0}$ holds. 
	\end{abstract}
	
	\maketitle
	
	\section{Introduction}\label{sec_intro}
	\subsection{The ideal $\mathcal{E}$}
	Let $\mathcal{E}$ denote the $\sigma$-ideal generated by closed null sets on the reals\footnote{It is known that when dealing with cardinal invariants of $\mathcal{E}$, it does not matter in which space $\mathcal{E}$ is defined, the real line $\br$, the unit interval $[0,1]$, the Baire space $\oo$ or the Cantor space $2^\omega$ (see e.g. \cite[Remark 4.1(3)]{CM23}).\label{footnote:E}}.
	Closed null sets are nowhere dense, so $\mathcal{E}\subseteq\m\cap\n$. Moreover, 
	Bartoszy\'{n}ski and Shelah \cite{BS92} proved $\add(\mathcal{E})=\addm$ and $\cof(\mathcal{E})=\cofm$. 
	Thus, the four cardinal invariants $\add(\mathcal{E})$, $\non(\mathcal{E})$, $\cov(\mathcal{E})$ and $\cof(\mathcal{E})$ are embedded into Cicho\'n's diagram as in Figure \ref{fig_Cd_only_E}.
	
	\begin{figure}[h]
		\begin{tikzpicture}
			\tikzset{
				textnode/.style={text=black}, 
			}
			\tikzset{
				edge/.style={color=black, thin}, 
			}
			\tikzset{cross/.style={preaction={-,draw=white,line width=9pt}}}
			\newcommand{\w}{2.4}
			\newcommand{\h}{1.65}
			
			\node[textnode] (addN) at (0,  0) {$\addn$};
			\node[textnode] (covN) at (0,  \h*2) {$\covn$};
			
			\node[textnode] (nonE) at (0.6*\w,  1.4*\h) {$\none$}; 
			\node[textnode] (covE) at (2.4*\w,  0.6*\h) {$\cove$}; 
			
			
			\node[textnode] (addM) at (\w,  0) {$\addm$};
			\node[textnode] (addE) at (\w,  -0.7*\h) {$\adde$};
			\node[textnode] (b) at (\w,  \h) {$\bb$};
			\node[textnode] (nonM) at (\w,  \h*2) {$\nonm$};
			
			\node[textnode] (covM) at (\w*2,  0) {$\covm$};
			\node[textnode] (d) at (\w*2,  \h) {$\dd$};
			\node[textnode] (cofE) at (\w*2,  2.7*\h) {$\cofe$};
			\node[textnode] (cofM) at (\w*2,  \h*2) {$\cofm$};

			\node[textnode] (nonN) at (\w*3,  0) {$\nonn$};
			\node[textnode] (cofN) at (\w*3,  \h*2) { $\cofn$};
			
			\node[textnode] (aleph1) at (-\w,  0) {$\aleph_1$};
			\node[textnode] (c) at (\w*4,  \h*2) {$2^{\aleph_0}$};
			
			\draw[->, edge] (nonE) to (nonM);
			\draw[->, edge] (nonE) to (nonN);
			\draw[->, edge] (addM) to (nonE);
			
			\draw[->, edge] (covM) to (covE);
			\draw[->, edge] (covN) to (covE);
			\draw[->, edge] (covE) to (cofM);
			
			\draw[->, edge] (addN) to (covN);
			\draw[->, edge] (addN) to (addM);
			\draw[->, edge] (covN) to (nonM);	
			\draw[->, edge] (addM) to (b);
			\draw[->, edge] (b) to (nonM);
			\draw[->, edge] (addM) to (covM);
			\draw[->, edge] (nonM) to (cofM);
			\draw[->, edge] (covM) to (d);
			\draw[->, edge] (d) to (cofM);
			\draw[->, edge] (b) to (d);
			\draw[->, edge] (covM) to (nonN);
			\draw[->, edge] (cofM) to (cofN);
			\draw[->, edge] (nonN) to (cofN);
			\draw[->, edge] (aleph1) to (addN);
			\draw[->, edge] (cofN) to (c);
			
			\draw[double distance=2pt, edge] (addM) to (addE);
			\draw[double distance=2pt, edge] (cofM) to (cofE);

		\end{tikzpicture}
		\caption{Cicho\'n's diagram with the four cardinal invariants of $\mathcal{E}$.}\label{fig_Cd_only_E}
	\end{figure}

	\subsection{Cicho\'n's maximum}
	Cicho\'n's maximum means a ``maximal'' separation constellation of Cicho\'n's diagram, where all the cardinal invariants in the diagram have distinct values, except for the two dependent numbers $\cofm=\max\{\dd,\nonm\}$ and $\addm=\min\{\bb,\covm\}$.
	Goldstern, Kellner and Shelah \cite{GKS} constructed a model of Cicho\'n's maximum (assuming four strongly compact cardinals) whose separation order is as in Figure \ref{fig_CM_GKS}.
	Later, they and Mej\'{\i}a \cite{GKMS} 
	constructed models of Cicho\'n's maximum for the two constellations from \cite{GKS} and \cite{KST19}, assuming the consistency of ZFC alone. 
	\begin{figure}
		\centering
		\begin{tikzpicture}
			\tikzset{
				textnode/.style={text=black}, 
			}
			\tikzset{
				edge/.style={color=black, thin, opacity=0.5}, 
			}
			\newcommand{\w}{2.4}
			\newcommand{\h}{1.5}
			
			\node[textnode] (addN) at (0,  0) {$\addn$};
			\node (t1) [fill=lime, draw, text=black, circle,inner sep=1.0pt] at (-0.25*\w, 0.5*\h) {$\theta_1$};
			
			\node[textnode] (covN) at (0,  2*\h) {$\covn$};
			\node (t2) [fill=lime, draw, text=black, circle,inner sep=1.0pt] at (0.25*\w, 1.5*\h) {$\theta_2$};

			\node[textnode] (addM) at (\w,  0) {$\cdot$};
			\node[textnode] (b) at (\w,  1*\h) {$\bb$};
			\node (t3) [fill=lime, draw, text=black, circle,inner sep=1.0pt] at (0.75*\w, 1.2*\h) {$\theta_3$};
			
			\node[textnode] (nonM) at (\w,  2*\h) {$\nonm$};
			\node (t4) [fill=lime, draw, text=black, circle,inner sep=1.0pt] at (1.25*\w, 2.4*\h) {$\theta_4$};
			
			\node[textnode] (covM) at (\w*2,  0) {$\covm$};
			\node (t5) [fill=lime, draw, text=black, circle,inner sep=1.0pt] at (1.75*\w, -0.4*\h) {$\theta_5$};
			
			\node[textnode] (d) at (\w*2,  1*\h) {$\dd$};
			\node (t6) [fill=lime, draw, text=black, circle,inner sep=1.0pt] at (2.25*\w, 0.8*\h) {$\theta_6$};
			
			\node[textnode] (cofM) at (\w*2,  2*\h) {$\cdot$};

			\node[textnode] (nonN) at (\w*3,  0) {$\nonn$};
			\node (t7) [fill=lime, draw, text=black, circle,inner sep=1.0pt] at (2.75*\w, 0.5*\h) {$\theta_7$};
			
			\node[textnode] (cofN) at (\w*3,  2*\h) {$\cofn$};
			\node (t8) [fill=lime, draw, text=black, circle,inner sep=1.0pt] at (3.25*\w, 1.5*\h) {$\theta_{8}$};
			
			\node[textnode] (aleph1) at (-\w,  0) {$\aleph_1$};
			\node[textnode] (c) at (\w*4,  2*\h) {$2^{\aleph_0}$};
			\node (tcc) [fill=lime, draw, text=black, circle,inner sep=1.0pt] at (3.7*\w, 2.5*\h) {$\theta_\cc$};

			\draw[->, edge] (addN) to (covN);
			\draw[->, edge] (addN) to (addM);
			\draw[->, edge] (covN) to (nonM);	
			\draw[->, edge] (addM) to (b);
			
			\draw[->, edge] (addM) to (covM);
			\draw[->, edge] (nonM) to (cofM);
			
			\draw[->, edge] (d) to (cofM);
			\draw[->, edge] (b) to (d);
			\draw[->, edge] (b) to (nonM);
			\draw[->, edge] (covM) to (nonN);
			\draw[->, edge] (covM) to (d);
			\draw[->, edge] (cofM) to (cofN);
			\draw[->, edge] (nonN) to (cofN);
			\draw[->, edge] (aleph1) to (addN);
			\draw[->, edge] (cofN) to (c);

			\draw[blue,thick] (-0.5*\w,1*\h)--(0.5*\w,1*\h);
			\draw[blue,thick] (2.5*\w,1*\h)--(3.5*\w,1*\h);

			\draw[blue,thick] (0.5*\w,1.5*\h)--(1.5*\w,1.5*\h);
			\draw[blue,thick] (1.5*\w,0.5*\h)--(2.5*\w,0.5*\h);

			\draw[blue,thick] (1.5*\w,-0.5*\h)--(1.5*\w,2.5*\h);
			\draw[blue,thick] (2.5*\w,-0.5*\h)--(2.5*\w,2.5*\h);

			\draw[blue,thick] (0.5*\w,-0.5*\h)--(0.5*\w,2.5*\h);
			\draw[blue,thick] (-0.5*\w,-0.5*\h)--(-0.5*\w,2.5*\h);
			\draw[blue,thick] (3.5*\w,-0.5*\h)--(3.5*\w,2.5*\h);

		\end{tikzpicture}
		\caption{Cicho\'n's maximum constructed in \cite{GKS}. $\aleph_1<\theta_1<\cdots<\theta_8<\theta_\cc$ are regular cardinals. $\addm$ and $\cofm$ are omitted as dots ``$\cdot$'' since they have dependent values.}\label{fig_CM_GKS}
	\end{figure}
	
	Now, let us consider a maximal constellation of Figure \ref{fig_Cd_only_E}. 
	Since $\adde=\addm$ and $\cofe=\cofm$ have dependent values, 
	a (possible) maximal constellation is \textit{Cicho\'n's maximum with $\none$ and $\cove$} of distinct values.
	Due to a result of Cardona and Mej\'{\i}a, in the model of Figure \ref{fig_CM_GKS} $\none=\nonm$ and $\cove=\covm$ hold (\cite[Claim 4.11]{Car24}). 
	However, as seen in e.g. \cite[Question 6.1(3), 6.3]{Car23} and \cite[Problem 4.22 (1)]{Car24},  it was open whether there exists a model which realizes such a maximal constellation.  
	\subsection{Main result}
	We solve the question positively. That is, $\none$ and $\cove$ can be added to Cicho\'n's maximum with distinct values: 
	
	\begin{teorema}(Theorem \ref{thm_p7fin})
		\label{teo_CM_nonE}
		Let $\aleph_1\leq\theta_1\leq\cdots\leq\theta_{10}$ be regular cardinals and $\theta_\cc$ an infinite cardinal with $\theta_\cc\geq\theta_{10}$ and $\theta_\cc^{\aleph_0}=\theta_\cc$.
		Then, the separation constellation described in Figure \ref{fig_CM_nonE_covE} consistently holds.
	\end{teorema}

	
	\begin{figure}
		\centering
		\begin{tikzpicture}
			\tikzset{
				textnode/.style={text=black}, 
			}
			\tikzset{
				edge/.style={color=black, thin, opacity=0.5}, 
			}
			\newcommand{\w}{2.4}
			\newcommand{\h}{1.65}
			
			\node[textnode] (addN) at (0,  0) {$\addn$};
			\node (t1) [fill=lime, draw, text=black, circle,inner sep=1.0pt] at (-0.25*\w, 0.4*\h) {$\theta_1$};
			
			\node[textnode] (covN) at (0,  2*\h) {$\covn$};
			\node (t2) [fill=lime, draw, text=black, circle,inner sep=1.0pt] at (-0.25*\w, 1.5*\h) {$\theta_2$};

			\node[textnode] (addM) at (\w,  0) {$\cdot$};
			\node[textnode] (b) at (\w,  1*\h) {$\bb$};
			\node (t3) [fill=lime, draw, text=black, circle,inner sep=1.0pt] at (1.2*\w, 0.7*\h) {$\theta_3$};
			
			\node[textnode] (nonE) at (0.6*\w,  1.4*\h) {$~~~~~\none$};

			\node[textnode] (nonM) at (\w,  2*\h) {$\nonm$};
			\node (t5) [fill=lime, draw, text=black, circle,inner sep=1.0pt] at (1.2*\w, 2.5*\h) {$\theta_5$};
			
			\node[textnode] (covM) at (\w*2,  0) {$\covm$};
			\node (t6) [fill=lime, draw, text=black, circle,inner sep=1.0pt] at (1.8*\w, -0.5*\h) {$\theta_6$};
			
			\node[textnode] (covE) at (2.4*\w,  0.6*\h) {$\cove~~~~~$};

			\node[textnode] (d) at (\w*2,  1*\h) {$\dd$};
			\node (t8) [fill=lime, draw, text=black, circle,inner sep=1.0pt] at (1.8*\w, 1.3*\h) {$\theta_8$};
			
			\node[textnode] (cofM) at (\w*2,  2*\h) {$\cdot$};

			\node[textnode] (nonN) at (\w*3,  0) {$\nonn$};
			\node (t9) [fill=lime, draw, text=black, circle,inner sep=1.0pt] at (3.25*\w, 0.5*\h) {$\theta_9$};
			
			\node[textnode] (cofN) at (\w*3,  2*\h) {$\cofn$};
			\node (t10) [fill=lime, draw, text=black, circle,inner sep=1.0pt] at (3.25*\w, 1.6*\h) {$\theta_{10}$};
			
			\node[textnode] (aleph1) at (-\w,  0) {$\aleph_1$};
			\node[textnode] (c) at (\w*4,  2*\h) {$2^{\aleph_0}$};
			\node (tcc) [fill=lime, draw, text=black, circle,inner sep=1.0pt] at (3.7*\w, 2.5*\h) {$\theta_{\mathfrak{c}}$};

			\draw[->, edge] (addN) to (covN);
			\draw[->, edge] (addN) to (addM);
			\draw[->, edge] (covN) to (nonM);	
			\draw[->, edge] (addM) to (b);
			
			\draw[->, edge] (addM) to (covM);
			\draw[->, edge] (nonM) to (cofM);
			
			\draw[->, edge] (d) to (cofM);
			\draw[->, edge] (b) to (d);
			\draw[->, edge] (b) to (nonM);
			\draw[->, edge] (covM) to (nonN);
			\draw[->, edge] (covM) to (d);
			\draw[->, edge] (cofM) to (cofN);
			\draw[->, edge] (nonN) to (cofN);
			\draw[->, edge] (aleph1) to (addN);
			\draw[->, edge] (cofN) to (c);
			
			\draw[->, edge] (nonE) to (nonM);
			\draw[->, edge] (nonE) to (nonN);
			\draw[->, edge] (addM) to (nonE);
			
			\draw[->, edge] (covM) to (covE);
			\draw[->, edge] (covN) to (covE);
			\draw[->, edge] (covE) to (cofM);

			\draw[blue,thick] (-0.5*\w,1*\h)--(0.35*\w,1*\h);
			\draw[blue,thick] (2.65*\w,1*\h)--(3.5*\w,1*\h);

			\draw[blue,thick] (0.35*\w,1.65*\h)--(1.5*\w,1.65*\h);
			\draw[blue,thick] (1.5*\w,0.35*\h)--(2.65*\w,0.35*\h);
			
			\draw[blue,thick] (0.35*\w,1.15*\h)--(1.5*\w,1.15*\h);
			\draw[blue,thick] (1.5*\w,0.85*\h)--(2.65*\w,0.85*\h);

			\draw[blue,thick] (1.5*\w,-0.5*\h)--(1.5*\w,2.5*\h);
			\draw[blue,thick] (2.65*\w,-0.5*\h)--(2.65*\w,2.5*\h);

			\draw[blue,thick] (0.35*\w,-0.5*\h)--(0.35*\w,2.5*\h);
			\draw[blue,thick] (-0.5*\w,-0.5*\h)--(-0.5*\w,2.5*\h);
			\draw[blue,thick] (3.5*\w,-0.5*\h)--(3.5*\w,2.5*\h);

			\node (t4) [fill=lime, draw, text=black, circle,inner sep=1.0pt] at (1.2*\w, 1.4*\h) {$\theta_4$};
			\node (t7) [fill=lime, draw, text=black, circle,inner sep=1.0pt] at (1.8*\w, 0.6*\h) {$\theta_7$};

		\end{tikzpicture}
		\caption{Cicho\'n's maximum with $\none$ and $\cove$.}\label{fig_CM_nonE_covE}
	\end{figure}

	Moreover, we additionally focus on the evasion number $\ee$ and its variants:
	
	\begin{dfn}
		\begin{enumerate}
			
			\item A pair $\pi=(D,\{\pi_n:n\in D\})$ is a predictor if $D\in\ooo$ and each $\pi_n$ is a function $\pi_n\colon\omega^n\to\omega$. $\Pred$ denotes the set of all predictors.
			\item A predictor $\pi=(D,\{\pi_n:n\in D\})$ predicts $f\in\oo$ if $f(n)=\pi_n(f\on n)$ for all but finitely many $n\in D$. 
			$f$ evades $\pi$ if $\pi$ does not predict $f$.
			\item The prediction number $\pr$ and the evasion number $\ee$ are defined as follows\footnote{While the name ``prediction number'' and the notation ``$\pr$'' are not common, we use them in this paper as in \cite{Yam25}.}:
			\begin{gather*}
				\mathfrak{pr}\coloneq\min\{|\Pi|:\Pi\subseteq \Pred,\forall f\in\oo~\exists\pi\in \Pi~ \pi \text{ predicts } f\},\\
				\mathfrak{e}\coloneq\min\{|F|:F\subseteq\oo,\forall \pi\in \Pred~\exists f\in F~ f\text{ evades }\pi\}.
			\end{gather*}

			\item A predictor $\pi$ \textit{bounding-predicts} $f\in\oo$ if $f(n)\leq\pi(f\on n)$ for all but finitely many $n\in D$.
			$\preb$ and $\eeb$ denote the prediction/evasion number respectively with respect to the bounding-prediction.
			\item Let $g\in\left(\omega+1\setminus2\right)^\omega$. ( ``$\setminus2$'' is required to exclude trivial cases.)
				\textit{$g$-prediction} is the prediction where the range of functions $f$ is restricted to $\prod_{n<\omega}g(n)$ and $\pr_g$ and $\ee_g$ denote the prediction/evasion number respectively with respect to $g$-prediction.
				Namely,
				\begin{gather*}
					\mathfrak{pr}_g\coloneq\min\{|\Pi|:\Pi\subseteq \Pred,\forall f\in\textstyle{\prod_{n<\omega}g(n)}~\exists\pi\in \Pi~\pi \text{ predicts } f\},\\
					\mathfrak{e}_g\coloneq\min\{|F|:F\subseteq\textstyle{\prod_{n<\omega}g(n)},\forall \pi\in \Pred~\exists f\in F~ f\text{ evades }\pi\}.
				\end{gather*}
				Define:
				\begin{gather*}
					\mathfrak{pr}_{ubd}\coloneq\sup\left\{\mathfrak{pr}_g:g\in\left(\omega\setminus2\right)^\omega\right\},\\
					\ee_{ubd}\coloneq\min\left\{\ee_g:g\in\left(\omega\setminus2\right)^\omega\right\}.
				\end{gather*}

			\end{enumerate}
			
		\end{dfn}

		We obtain another separation model including these numbers to answer \cite[Question 5.4]{Yam25} and \cite[Problem 4.22 (4)]{Car24}:
		
		\begin{teorema}(Theorem \ref{thm_p7fin})
			\label{teo_CM_e_nonE}
			Let $\aleph_1\leq\theta_1\leq\cdots\leq\theta_{12}$ be regular cardinals and $\theta_\cc$ an infinite cardinal with $\theta_\cc\geq\theta_{12}$ and $\theta_\cc^{\aleph_0}=\theta_\cc$.
			Then, the separation constellation described in Figure \ref{fig_CM_many} consistently holds.
		\end{teorema}
		
		\begin{figure}
			\centering
			\begin{tikzpicture}
				\tikzset{
					textnode/.style={text=black}, 
				}
				\tikzset{
					edge/.style={color=black, thin, opacity=0.4}, 
				}
				\newcommand{\w}{2.4}
				\newcommand{\h}{2.0}
				
				\node[textnode] (addN) at (0,  0) {$\addn$};
				\node (t1) [fill=lime, draw, text=black, circle,inner sep=1.0pt] at (-0.25*\w, 0.8*\h) {$\theta_1$};
				
				\node[textnode] (covN) at (0,  \h*3) {$\covn$};
				\node (t2) [fill=lime, draw, text=black, circle,inner sep=1.0pt] at (0.15*\w, 3.3*\h) {$\theta_2$};

				\node[textnode] (addM) at (\w,  0) {$\cdot$};
				\node[textnode] (b) at (\w,  1.3*\h) {$\bb$};
				\node (t3) [fill=lime, draw, text=black, circle,inner sep=1.0pt] at (0.68*\w, 1.2*\h) {$\theta_3$};
				
				\node[textnode] (nonM) at (\w,  \h*3) {$\nonm$};
				\node (t6) [fill=lime, draw, text=black, circle,inner sep=1.0pt] at (1.35*\w, 3.3*\h) {$\theta_6$};
				
				\node[textnode] (covM) at (\w*2,  0) {$\covm$};
				\node (t7) [fill=lime, draw, text=black, circle,inner sep=1.0pt] at (1.65*\w, -0.3*\h) {$\theta_7$};
				
				\node[textnode] (d) at (\w*2,  1.7*\h) {$\dd$};
				\node (t10) [fill=lime, draw, text=black, circle,inner sep=1.0pt] at (2.32*\w, 1.8*\h) {$\theta_{10}$};
				\node[textnode] (cofM) at (\w*2,  \h*3) {$\cdot$};

				\node[textnode] (nonN) at (\w*3,  0) {$\nonn$};
				\node (t11) [fill=lime, draw, text=black, circle,inner sep=1.0pt] at (2.85*\w, -0.3*\h) {$\theta_{11}$};
				
				\node[textnode] (cofN) at (\w*3,  \h*3) {$\cofn$};
				\node (t12) [fill=lime, draw, text=black, circle,inner sep=1.0pt] at (3.25*\w, 2.2*\h) {$\theta_{12}$};
				
				\node[textnode] (aleph1) at (-\w,  0) {$\aleph_1$};
				\node[textnode] (c) at (\w*4,  \h*3) {$2^{\aleph_0}$};
				\node (t10) [fill=lime, draw, text=black, circle,inner sep=1.0pt] at (3.67*\w, 3.4*\h) {$\theta_\cc$};
				
				\node[textnode] (e) at (0.5*\w,  1.7*\h) {$\mathfrak{e}$}; 
				\node[textnode] (estar) at (\w,  1.7*\h) {$\ee^*$};
				\node (t4) [fill=lime, draw, text=black, circle,inner sep=1.0pt] at (0.15*\w, 1.7*\h) {$\theta_4$};

				\node[textnode] (pr) at (2.5*\w,  1.3*\h) {$\mathfrak{pr}$}; 
				\node[textnode] (prstar) at (\w*2,  1.3*\h) {$\mathfrak{pr}^*$};
				\node (t9) [fill=lime, draw, text=black, circle,inner sep=1.0pt] at (2.85*\w, 1.3*\h) {$\theta_9$};
				
				\node[textnode] (eubd) at (\w*0.2,  \h*2.1) {$\ee_{ubd}$};
				\node[textnode] (nonE) at (\w*0.75,  \h*2.55) {$\non(\mathcal{E})$};
				\node (t5) [fill=lime, draw, text=black, circle,inner sep=1.0pt] at (0.5*\w, 2.1*\h) {$\theta_5$};
				
				\node[textnode] (prubd) at (\w*2.8,  \h*0.9) {$\mathfrak{pr}_{ubd}$};
				\node[textnode] (covE) at (\w*2.25,  \h*0.45) {$\cov(\mathcal{E})$};
				\node (t8) [fill=lime, draw, text=black, circle,inner sep=1.0pt] at (2.47*\w, 0.9*\h) {$\theta_8$};

				\draw[->, edge] (addN) to (covN);
				\draw[->, edge] (addN) to (addM);
				\draw[->, edge] (covN) to (nonM);	
				\draw[->, edge] (addM) to (b);
				\draw[->, edge] (addM) to (covM);
				\draw[->, edge] (nonM) to (cofM);
				\draw[->, edge] (d) to (cofM);
				\draw[->, edge] (b) to (prstar);
				\draw[->, edge] (covM) to (nonN);
				\draw[->, edge] (cofM) to (cofN);
				\draw[->, edge] (nonN) to (cofN);
				\draw[->, edge] (aleph1) to (addN);
				\draw[->, edge] (cofN) to (c);
				
				\draw[->, edge] (e) to (covM);
				\draw[->, edge] (addN) to (e);
				
				\draw[->, edge] (covM) to (prstar);
				\draw[->, edge] (nonM) to (pr);
				\draw[->, edge] (pr) to (cofN);
				
				\draw[->, edge] (e) to (estar);
				\draw[->, edge] (b) to (estar);C
				\draw[->, edge] (estar) to (nonM);
				\draw[->, edge] (estar) to (d);
				\draw[->, edge] (e) to (eubd);
				
				\draw[->, edge] (prstar) to (d);
				\draw[->, edge] (prstar) to (pr);
				
				\draw[->, edge] (prstar) to (pr);
				\draw[->, edge] (prubd) to (pr);
				\
				
				\draw[->, edge] (eubd) to (nonE);
				\draw[->, edge] (addM) to (nonE);
				\draw[->, edge] (nonE) to (nonM);
				
				\draw[->, edge] (covE) to (prubd);
				\draw[->, edge] (covE) to (cofM);
				\draw[->, edge] (covM) to (covE);
				
				\draw[blue,thick] (-0.5*\w,1.5*\h)--(3.5*\w,1.5*\h);
				\draw[blue,thick] (1.5*\w,-0.5*\h)--(1.5*\w,3.5*\h);
				
				\draw[blue,thick] (-0.5*\w,-0.5*\h)--(-0.5*\w,3.5*\h);
				\draw[blue,thick] (3.5*\w,-0.5*\h)--(3.5*\w,3.5*\h);
				
				\draw[blue,thick] (0.5*\w,-0.5*\h)--(0.5*\w,1.5*\h);
				\draw[blue,thick] (2.5*\w,1.5*\h)--(2.5*\w,3.5*\h);
				
				\draw[blue,thick] (-0.1*\w,1.9*\h)--(1.5*\w,1.9*\h);
				\draw[blue,thick] (-0.1*\w,2.7*\h)--(1.5*\w,2.7*\h);
				\draw[blue,thick] (-0.1*\w,1.5*\h)--(-0.1*\w,2.7*\h);
				\draw[blue,thick] (0.5*\w,2.7*\h)--(0.5*\w,3.5*\h);
				
				\draw[blue,thick] (3.1*\w,1.1*\h)--(1.5*\w,1.1*\h);
				\draw[blue,thick] (3.1*\w,0.3*\h)--(1.5*\w,0.3*\h);
				\draw[blue,thick] (3.1*\w,1.5*\h)--(3.1*\w,0.3*\h);
				\draw[blue,thick] (2.5*\w,0.3*\h)--(2.5*\w,-0.5*\h);

				\draw[->, edge] (nonE) to (nonN);
				\draw[->, edge] (covN) to (covE);

			\end{tikzpicture}
			\caption{Cicho\'n's maximum with $\ee$, $\ee^*$, $\ee_{ubd}$ , $\none$ and their duals.}\label{fig_CM_many}
		\end{figure}

		\subsection{Methods}
		The construction of Cicho\'n's maximum consists of two steps: first we separate the left side of the diagram and then separate the right. The second step is not the main part of this paper, so let us focus on the first one. The main work of this step is to keep the bounding number $\bb$ small through the forcing iteration, since the other cardinal invariants are kept small without special care.
		To control $\bb$, in \cite{GKS} they used the \textit{ultrafilter-limit} (abbreviated as UF-limit in this paper) method, which was first introduced by Goldstern, Mej\'{\i}a and Shelah in \cite{GMS16} who showed  (see also Table \ref{table_limits}):
		\begin{thm}(\cite[Main Lemma 4.6]{GMS16})
			UF-limits keep $\bb$ small.
		\end{thm}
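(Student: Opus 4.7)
The plan is to prove that if $\p$ admits UF-limits with respect to a free ultrafilter $\ult$ on $\omega$, then $\p$ preserves the unboundedness of any small ground-model unbounded family, so $\bb$ stays small in any $\p$-generic extension. The heart of the argument is a \emph{reading lemma}: for every $\p$-name $\dot f$ for an element of $\oo$ and every $p\in\p$, there exist a ground-model $g\in\oo$ and an extension $p_*\leq p$ with $p_*\forces g\not\leq^*\dot f$.

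To prove the reading lemma I would first, for each $n<\omega$, pick a condition $p_n\leq p$ deciding $\dot f(n)=k_n$, and set $g(n):=k_n+1$. The UF-limit construction applied to the sequence $\langle p_n:n<\omega\rangle$ produces a condition $p_*\leq p$ with the key property that, on a $\ult$-large set of indices $n$, the behaviour of $p_*$ ``agrees'' with that of $p_n$. Unwinding this, $p_*$ forces the set $\{n:\dot f(n)=k_n\}$ to be $\ult$-large, in particular infinite, so $p_*\forces g(n)>\dot f(n)$ for infinitely many $n$, i.e.\ $p_*\forces g\not\leq^*\dot f$.

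To promote this single-name statement to preservation of $\bb$, I would work inside an elementary submodel $M\prec H(\chi)$ of size $\kappa$ (the intended ceiling for $\bb$) containing $\p$, $\ult$, and all relevant parameters. Every $\p$-name for an element of $\oo$ reflects into such an $M$, and the reading step carried out inside $M$ produces a witness $g\in M\cap\oo$. Thus the ground-model family $M\cap\oo$, of cardinality $\kappa$, remains unbounded in the generic extension, giving $\bb\leq\kappa$ there.

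The main obstacle is lifting this single-step analysis to the full iteration used to build Cicho\'n's maximum. One must verify that the UF-limit property propagates through the iteration framework (the matrix/finite-support construction of \cite{GMS16}), that a coherent choice of ultrafilters can be made across stages, and that the pool of admissible conditions at each stage is closed under the relevant limit operation, so that the UF-limit $p_*$ produced at a late stage still lies in the iteration. Handling this coherence, and in particular arranging that decisions about $\dot f$ made at various stages can be amalgamated into a single $\p$-condition, is the technical core of Main Lemma 4.6 in \cite{GMS16}.
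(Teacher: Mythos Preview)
Your single-step reading lemma is essentially right, though you gloss over one point: the map $\lim^D$ is only defined on sequences lying in a \emph{single} linked component $Q$, so before taking the limit you must thin the $p_n$'s (by pigeonhole over the countably many components) to land in one $Q_i$. This is harmless but should be said.

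The real gap is in the passage from the single step to the iteration, and here your proposal goes wrong in a way that cannot be repaired along the lines you suggest. You propose to use the ground-model family $M\cap\oo$ as the unbounded witness, with $M\prec H(\chi)$ of size $\kappa$. But in the iterations actually under consideration (see Assumption~\ref{ass_first_Cohen} and Construction~\ref{con_P7}) the iterands include Hechler-type stages which add dominating reals over prescribed small families; there is no reason for $M\cap\oo$ to remain unbounded in the final extension. Relatedly, your claim that ``every $\p$-name for an element of $\oo$ reflects into $M$'' is unjustified: the iteration $\p_\gamma$ has size $\lambda\gg\kappa=|M|$, so a nice name for a real may use conditions lying entirely outside $M$, and there is nothing for elementarity to grab onto.

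The paper (following \cite{GMS16}) takes a genuinely different route. The unbounded family is not ground-model at all: it is the family of Cohen reals $\langle c_\b:\b<\lambda\rangle$ added at the first $\lambda$ stages of the iteration. One shows that for any $\p_\gamma$-name $\dot f$, only ${<}\kappa$-many $c_\b$ can be forced $\leq^*\dot f$; this yields $C_{[\lambda]^{<\kappa}}\lq\mathbf{D}$ and hence $\bb\leq\kappa$. The proof is where the guardrail machinery earns its keep: assuming $\kappa$-many $p_\a$ force $c_{\b_\a}\leq^*\dot f$, one refines to a uniform $\Delta$-system with root $\nabla$ all following a single guardrail $h\in H$ (possible since $|H|<\kappa$), arranges $\b_\a\notin\nabla$ so that the Cohen coordinates are ``free'', extends each $p_\a$ at its Cohen position $\b_\a$ to force $c_{\b_\a}$ to take a prescribed large value at a chosen input, and only then takes the UF-limit $p^\infty=\lim^h$ along the iteration. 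The point is that the Cohen coordinates, lying outside the root, can be modified independently while the sequence still follows a common (new) guardrail --- this is exactly what fails if one tries to use fixed ground-model reals. Compare the proof of Theorem~\ref{thm_UFI_keeps_nonE_small} in this paper, which follows the identical template with $\none$ in place of $\bb$.
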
 
		Roughly speaking, by using this method we can take a \textit{limit condition} of a sequence of countably many conditions and the limit condition tells us some information of the original sequence and consequently makes it possible to argue the smallness of $\bb$.
		
		There are several other kinds of limit methods: Kellner, Shelah and T{\u{a}}nasie \cite{KST19} introduced the \textit{FAM-limit} method (focusing on finitely additive measures on $\omega$), which stems from Shelah's work \cite{She00} to force $\covn$ to have countable cofinality, and consequently they constructed another model of Cicho\'n's maximum with a different order.
		The author introduced in \cite{Yam25} a new limit notion called \textit{closed-UF-limit}, which is a specific kind of the original UF-limit, and proved (note $\ee\leq\ee^*$):
		\begin{thm}(\cite[Main Lemma 3.26]{Yam25})
			Closed-UF-limits keep $\ee^*$ small.
		\end{thm}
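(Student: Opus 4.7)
The plan is to show that a bounding-evading family of size $<\kappa$ constructed at an early stage of the iteration persists as a witness for $\eeb \leq \kappa$ in the final extension, in parallel to the way UF-limits are used to keep $\bb$ small. Fix a name $\dot\pi=(\dot D,\langle\dot\pi_n:n\in\dot D\rangle)$ for an arbitrary bounding-predictor added by the iteration and a condition $p$ forcing $\dot\pi\in\Pred$. I would find, below a suitable extension of $p$, some $f$ in a fixed ground-model family $F$ with $|F|<\kappa$ that bounding-evades $\dot\pi$; a density argument then yields $\eeb\leq|F|$ in the extension.

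\textbf{Limit step.} Build a sequence $\langle p_k:k<\omega\rangle$ of conditions strengthening $p$ in which the $p_k$ decide longer and longer initial segments of $\dot\pi$, pinning down the values $\dot\pi_n(s)$ for more and more pairs $(n,s)$. Take the closed-UF-limit $p^*$ of this sequence with respect to an ultrafilter $\ult$ on $\omega$. The defining feature of a closed-UF-limit (as opposed to an arbitrary UF-limit) is a boundedness control on the values forced by $p^*$: for each $s\in\seq$, the collection of natural numbers that the $p_k$'s assign to $\dot\pi_{|s|}(s)$ is dominated by a ground-model number, so the $\ult$-majority is a well-defined natural number $\pi^*_{|s|}(s)\in\omega$ definable in the ground model. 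This produces a ground-model bounding-predictor $\pi^*$ such that $p^*$ forces $\dot\pi_n(s)\leq\pi^*_n(s)$ on a $\ult$-large set of coordinates $n$.

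\textbf{Transferring evasion.} By the inductive hypothesis on $F$, some $f\in F$ bounding-evades $\pi^*$, i.e.\ $f(n)>\pi^*_n(f\on n)$ for infinitely many $n$. Since the set of coordinates where $\dot\pi_n$ is dominated by $\pi^*_n$ is $\ult$-large, these two infinite sets intersect in an infinite set, and $p^*$ consequently forces $f(n)>\dot\pi_n(f\on n)$ for infinitely many $n\in\dot D$. Thus $f$ bounding-evades $\dot\pi$ below $p^*$, as required.

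\textbf{Main obstacle.} The crux is the limit step: one must guarantee that the $\ult$-majority producing $\pi^*_{|s|}(s)$ is a genuine natural number rather than an unbounded symbol. Arbitrary UF-limits fail precisely here, because the set of candidate values forced on $\dot\pi_{|s|}(s)$ by the approximating sequence can be cofinal in $\omega$, so the $\ult$-limit escapes to infinity. The \emph{closedness} clause in the definition of the closed-UF-limit is the combinatorial requirement tailored to force the relevant sets of candidate values to remain bounded in $\omega$. The hard technical work is then to verify that closed-UF-limits with this property can actually be constructed inside the concrete forcings used in Cicho\'n's maximum, and that their construction is compatible with the iteration framework of \cite{GKS,GKMS}.
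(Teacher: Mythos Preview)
Your proposal misidentifies both the witness family and the meaning of ``closed'' in closed-UF-limits, and as written it cannot succeed.

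\textbf{The witness family.} You propose a ground-model family $F$ with $|F|<\kappa$ and argue that every predictor $\dot\pi$ in the extension is evaded by some $f\in F$. This would give $\eeb\leq|F|<\kappa$, contradicting the other half of the construction (the $\prp$ iterands are there precisely to force $\eeb\geq\kappa$). The correct witnesses are the $\lambda$-many Cohen reals $\langle c_\alpha:\alpha<\lambda\rangle$ added in the first block of the iteration, and the goal is the Tukey relation $C_{[\lambda]^{<\kappa}}\lq\mathbf{BPR}$: for each predictor $\pi$ in the extension, the set $\{\alpha<\lambda:\pi\text{ bounding-predicts }c_\alpha\}$ has size $<\kappa$. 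This is not the same as exhibiting a small evading family, and it does not use any ground-model family at all.

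\textbf{Structure of the argument.} The actual proof is by contradiction: assume some $\dot\pi$ is forced by $p$ to bounding-predict $\kappa$-many $c_\alpha$, choose $p_\alpha\leq p$ witnessing this for each such $\alpha$, refine to an $(h,\bar\varepsilon)$-uniform $\Delta$-system following a single guardrail (using $|H|<\kappa$), and take the closed-UF-limit $p^\infty$ of the first $\omega$ many. The Cohen coordinates $\alpha_m$ lie outside the root, and one extends the Cohen part at $\alpha_m$ step by step so as to violate bounding-prediction, while simultaneously deciding the relevant values $\dot\pi_n(c_{\alpha_m}\on n)$. Your decreasing sequence of conditions below a single $p$ that ``decides more and more of $\dot\pi$'' is not this structure and does not interface with the guardrail/$\Delta$-system machinery that the iteration framework provides.

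\textbf{What closedness actually means.} You describe closedness as a ``boundedness control on the values forced by $p^*$'', but that is not the definition: $Q$ is closed-$D$-lim-linked when $\ran(\lim^D)\subseteq Q$ (Definition~\ref{dfn_UF_linked}). The point is that the limit condition lands back in the \emph{same} linked component, so when one alternates ``extend the Cohen part / decide a value of $\dot\pi$ / take a limit'' along $\omega$, the process stays inside a fixed component and the construction does not drift. The boundedness of the relevant values of $\dot\pi$ is obtained not from closedness per se but from the fact that at each finite stage only finitely many values have been decided; closedness is what lets you iterate this and still have a genuine condition at the end.
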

		Moreover, he \textit{mixed} two limit methods: he performed a forcing iteration which has both UF-limits and closed-UF-limits with different sizes, controlled both $\bb$ and $\ee$ at the same time, and forced them to take distinct values according to the sizes of the two limits. Finally he constructed a model of \textit{Cicho\'n's maximum with evasion number}.
		
		Thus, one may naturally consider if we can mix UF-limits and FAM-limits and actually this question has been considered recently. In 2021, Goldstern, Kellner, Mej\'{\i}a and Shelah in \cite{GKMS21} found that FAM-limits keep $\ee$ small and considered mixing UF-limits and FAM-limits to obtain Cicho\'n's maximum with $\ee$, though they found a gap in the mixing argument later. In 2023, Cardona and Mej\'{\i}a improved this result as follows (note $\ee\leq\none$): 
		\begin{thm}(\cite[Theorem 10.4]{CMU24})
			\label{thm_fam_keep_none_small}
			FAM-limits keep $\none$ small.
		\end{thm}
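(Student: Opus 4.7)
My plan is to invoke a combinatorial characterization of $\non(\mathcal{E})$ as the bounding number $\bb(\lebf)$ of a relational system $\lebf$ (``Lebesgue escape''), whose dominating side consists of sequences $\bar F=\langle F_n:n<\omega\rangle$ of clopen subsets of $2^\omega$ with $\sum_n\mu(F_n)<\infty$, and where $x$ is dominated by $\bar F$ iff $x\in F_n$ for all but finitely many $n$. Such a characterization is classical, cf.\ Bartoszynski--Judah and the formulation in \cite{CMU24}: by Borel--Cantelli each $\limsup_n F_n$ is an $F_\sigma$ null set, and conversely every $F_\sigma$ null set is captured by some such $\bar F$ up to a null perturbation. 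Thus, producing a small $\lebf$-unbounded family in the final extension witnesses $\non(\mathcal{E})\leq|F|$, since such a family cannot be contained in any $F_\sigma$ null set.

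Next, to preserve $\lebf$-unboundedness through the iteration, I would fix a designated family $F=\{\dot x_\alpha:\alpha\in S\}$ of generic reals (with $|S|$ the intended small value of $\none$) and a name $\dot{\bar F}$ for a candidate dominator in the final extension, choose a decreasing sequence of conditions $\langle p_n:n<\omega\rangle$ deciding longer and longer initial segments of $\dot{\bar F}$, and apply the FAM-limit construction to produce a limit condition $p^*$ together with a finitely additive measure $\Xi$ on $\omega$ such that $p^*$ forces the averaging identity: for each $\alpha\in S$, the $\Xi$-measure of $\{n:x_\alpha\in\dot F_n\}$ equals $\lim_n\mu(\dot F_n)=0$. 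Since any reasonable $\Xi$ vanishes on finite sets, $\Xi(A)=0$ forces $A$ to be co-infinite, so $x_\alpha\notin\dot F_n$ infinitely often, i.e., $x_\alpha$ is not dominated by $\dot{\bar F}$.

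The hard part will be establishing the averaging identity uniformly across the $x_\alpha$. This hinges on aligning the interval/Boolean structure of $\dot{\bar F}$ with the partition of $\omega$ underlying $\Xi$, and exploiting that each $p_n$ decides only finitely many coordinates of $\dot{\bar F}$, so that the FAM-averages of the indicator functions $n\mapsto\mathbf{1}_{F_n}(x_\alpha)$ are actually computable in terms of the expected values $\mu(\dot F_n)$. One must also propagate FAM-limit preservation through both successor and limit stages of the iteration; the successor step is routine, using that the iterands admit FAM-limits as in \cite{KST19}, whereas FAM-limits composing across countable-support limits is the genuinely novel ingredient of \cite{CMU24} and requires a delicate inductive bookkeeping on the FAMs. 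With this preservation schema in place, specializing to an iteration whose designated generics form $\{x_\alpha\}_{\alpha\in S}$ yields $\non(\mathcal{E})\leq|S|$ in the extension.
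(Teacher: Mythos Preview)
Your characterization of $\non(\mathcal{E})$ via the $\liminf$-of-clopen-sets relational system is sound for the direction you need (every $F_\sigma$ null set sits inside such a $\liminf$), but the heart of your argument --- the ``averaging identity'' $\Xi(\{n:x_\alpha\in\dot F_n\})=\lim_n\mu(\dot F_n)$ --- is asserted, not proved, and it is not a general property of FAM-limits. What a FAM-limit gives you is a condition $p^*$ forcing that the $\Xi$-density of $\{n:p_n\in\dot G\}$ is close to $1$; it says nothing a priori about the $\Xi$-density of $\{n:x_\alpha\in\dot F_n\}$ for an arbitrary real $x_\alpha$ and arbitrary clopen sets $\dot F_n$. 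Turning one statement into the other requires an explicit coupling between the forcing conditions and the Lebesgue-measure structure of the $\dot F_n$, and you have not supplied one; the sentence ``the FAM-averages of the indicator functions are actually computable in terms of the expected values $\mu(\dot F_n)$'' is precisely the missing lemma, not an observation. (Two side remarks: a ``decreasing sequence of conditions deciding initial segments'' is not the shape of a FAM-limit input --- those are sequences indexed by an interval partition, not a descending chain --- and the iterations here are finite-support, not countable-support.)

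The paper's proof (given in full for the UFI special case, Theorem~\ref{thm_UFI_keeps_nonE_small}) supplies exactly this coupling, and it is instructive to see how. It works directly with the Cohen reals $c_\beta$: assuming $\kappa$-many of them land in a single closed null tree $[\dot T]$, one refines to an $(h,\bar\varepsilon)$-uniform $\Delta$-system $\langle p_l\rangle_{l<\omega}$ with common Cohen stem $s$, and then \emph{manually extends} each $p_l$ at its Cohen coordinate $\beta_l\notin\nabla$ so as to force $c_{\beta_l}\!\upharpoonright(|s|+k)=s^\frown\sigma(l)$, where $\sigma$ bijects each $I_k$ with $2^k$. The UFI-limit $q^\infty$ of the modified sequence then forces that for infinitely many $k$ a $(1-\varepsilon)$-fraction of $\{q_l:l\in I_k\}$ lie in the generic, hence a $(1-\varepsilon)$-fraction of the $2^k$ nodes at level $|s|+k$ lie in $\dot T$; this is literally the statement $\mu([\dot T]\cap[s])\geq(1-\varepsilon)\mu([s])>0$, contradicting that $[\dot T]$ is null. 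The bridge from ``density of conditions in the generic filter'' to ``Lebesgue measure of the tree'' is the hand-built bijection $\sigma$, not an abstract averaging principle --- and that is the idea your outline is missing.
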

		
		Therefore, it turned out that if the mix is possible, we can obtain Cicho\'n's maximum with $\none$ and $\cove$, but the possibility itself was still unclear.

		%
		
		To achieve the mix, we need a forcing notion as an iterand in the iteration, satisfying the following four properties:
		\begin{enumerate}
			\item It increases $\nonm$. \label{item_prop_1}
			\item It has UF-limits.\label{item_prop_2}
			\item It has FAM-limits. \label{item_prop_3}
		\end{enumerate}
		And for a technical reason on the construction of the forcing iteration (see Lemma \ref{lem_uf_const_lim} and also Remark \ref{rem_key_centered}), 
		\begin{enumerate}
			\setcounter{enumi}{3}
			\item It is \textit{$\sigma$-centered}. \label{item_prop_4}
		\end{enumerate}
		However, previously known posets do not satisfy all the properties, though there were several close candidates. 
		For example, the standard eventually different forcing $\mathbb{E}$ satisfies \eqref{item_prop_1},\eqref{item_prop_2},\eqref{item_prop_4}, but not \eqref{item_prop_3} (see e.g. \cite[Remark 6.12]{CMU24}). 
		On the other hand, $\tilde{\mathbb{E}}$, which is a variant of $\mathbb{E}$ introduced in \cite{KST19}, satisfies \eqref{item_prop_1},\eqref{item_prop_2},\eqref{item_prop_3}, but not \eqref{item_prop_4}.
		Moreover, what made it practically difficult to tackle the problem of the mix was that the theory of FAM-limits was so complicated that there were only two known (non-trivial) posets with FAM-limits, $\tilde{\mathbb{E}}$ and random forcing $\mathbb{B}$.
		
		In 2024, Mej\'{\i}a ``anatomized'' the structure of $\tilde{\mathbb{E}}$ in \cite{Mej24} and introduced a new limit notion \textit{UF-limit for intervals} (abbreviated as UFI-limit in this paper). He distilled from the theory of FAM-limit several essential properties sufficient for controlling $\none$, and translated them into the context of UF-limits. As a result, he defined UFI-limit and showed $\tilde{\mathbb{E}}$ has UFI-limits. Later, he, Cardona and Uribe \cite{CMU24} developed a general theory of FAM-limits including UF-limits by seeing an ultrafilter as a finitely additive measure with values in $\{0,1\}$ and consequently it turned out that the following theorem holds as a special case of Theorem \ref{thm_fam_keep_none_small}: 
		%
		\begin{thm}(\cite[Theorem 10.4]{CMU24}, Theorem \ref{thm_UFI_keeps_nonE_small})
			UFI-limits keep $\none$ small.
		\end{thm}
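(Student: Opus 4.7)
The plan is to derive this theorem as a specialization of Theorem \ref{thm_fam_keep_none_small}, exploiting the Cardona--Mej\'{\i}a--Uribe insight that an ultrafilter on $\omega$ is nothing but a $\{0,1\}$-valued finitely additive measure. Since UFI-limits are UF-limits ``for intervals'' (indexed by a fixed interval partition $\bar{I}=\langle I_k\rangle_{k<\omega}$ together with an ultrafilter $\mathcal{U}$ on $\omega$), the natural route is to exhibit every UFI-limit as a particular FAM-limit and then invoke Theorem \ref{thm_fam_keep_none_small} off the shelf.

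First, I would spell out the translation from a UFI-datum $(\mathcal{U},\bar I)$ to a FAM-datum. Define a $\{0,1\}$-valued finitely additive measure $\Xi$ on $\omega$ by declaring $\Xi(A)=1$ iff $\{k<\omega: I_k\subseteq A\}\in\mathcal{U}$, extended from the algebra generated by the intervals $I_k$ to all of $\mathcal{P}(\omega)$ in the canonical way; finite additivity is immediate because $\bar I$ is a partition and $\mathcal U$ is an ultrafilter. Then I would check that the defining clauses of a UFI-limit condition $q$ for a sequence $\langle p_n:n<\omega\rangle$---that membership of interval-indexed averaged data in $\mathcal{U}$ forces appropriate information into $q$---are precisely the FAM-limit clauses for $\Xi$ applied to the same sequence. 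In other words, UFI-limits sit inside the FAM-limit framework as exactly those FAM-limits whose underlying measure is $\{0,1\}$-valued and concentrated along an interval partition.

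Second, having realized UFI-limits as a subclass of FAM-limits, I apply Theorem \ref{thm_fam_keep_none_small}: the theorem asserts that any FAM-limit preserves the smallness of $\non(\mathcal{E})$, and UFI-limits, being a special case, inherit the conclusion. Concretely, the preservation proof in \cite{CMU24} produces, from a putative $\non(\mathcal{E})$-witness in the extension, a closed null set in the ground model covering it by averaging approximations against the measure; under our identification, this averaging collapses to the ultrafilter-choice of a single ``large'' coordinate, which is exactly what the UFI-limit delivers.

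The main obstacle I anticipate is in the bookkeeping of the first step: the UFI-limit definition in \cite{Mej24} is phrased interval-by-interval with additional side conditions (trunks, loss functions, and so on) tailored to the iterands at hand, while the FAM-limit definition in \cite{CMU24} is stated abstractly in terms of averaging against an arbitrary finitely additive measure. Verifying that the former implies the latter for the measure $\Xi$---so that the hypotheses of Theorem \ref{thm_fam_keep_none_small} are literally met, not merely morally---will require a careful unpacking of both definitions and the interval-averaging identities that link them. Once this dictionary is in place, the theorem follows as a corollary rather than requiring any new combinatorics specific to $\non(\mathcal{E})$.
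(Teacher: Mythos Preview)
Your strategy---reduce UFI-limits to FAM-limits and invoke Theorem~\ref{thm_fam_keep_none_small}---is exactly the route the paper \emph{mentions} is available (it says the result ``is obtained from \cite[Theorem 10.4]{CMU24} by just interpreting it in our framework''), but it is \emph{not} the proof the paper writes down. The paper instead gives a direct, self-contained argument: it shows that the first $\lambda$-many Cohen reals added in the iteration witness $C_{[\lambda]^{<\kappa}}\lq C_\mathcal{E}$. Assuming for contradiction that some closed null set $[\dot T]$ captures $\kappa$-many of the Cohen reals $\dot c_{\beta_\alpha}$, one refines to an $(h,\bar\varepsilon)$-uniform $\Delta$-system, modifies the Cohen coordinates so that the $\beta_l$-th value ranges over all of $2^k$ as $l$ ranges over $I_k$, and takes the UFI-limit $q^\infty$. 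Lemma~\ref{lem_UFI_principle} then forces that for infinitely many $k$ the tree $\dot T$ contains at least a $(1-\varepsilon)$-fraction of the nodes in $[s]\cap 2^{|s|+k}$, so $\mu([\dot T])>0$---contradicting nullness. No FAM machinery is imported; the only ingredients are the $\Delta$-system lemma, the density estimate of Lemma~\ref{lem_UFI_principle}, and an elementary measure computation.

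Your reduction, by contrast, has a concrete gap in the translation step. The measure you propose, $\Xi(A)=1$ iff $\{k:I_k\subseteq A\}\in\mathcal U$, is finitely additive only on the subalgebra generated by the $I_k$; any extension of it to $\mathcal P(\omega)$ \emph{cannot} remain $\{0,1\}$-valued (split each $I_k$ in half and the two halves must each get measure $\tfrac12$). The correct measure underlying the UFI framework is the real-valued $\Xi(A)=\lim_{\mathcal U}|A\cap I_k|/|I_k|$, and the $\{0,1\}$-valued identification the paper alludes to is for plain UF-limits (ultrafilter on $\omega$), not for UFI-limits (ultrafilter on the index set of intervals). With the corrected $\Xi$ the dictionary can be made to work---this is precisely what \cite{CMU24} does---but the verification that $(D,\varepsilon)$-lim-linked matches FAM-$(\Xi,\varepsilon)$-linked, and that the iteration-level hypotheses transfer, is the substantive content of that paper's Sections~5--7, not a bookkeeping exercise. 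So your proposal outsources the actual work to \cite{CMU24}, whereas the paper's proof is short and internal to the UFI formalism it has already built.
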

		
		Table \ref{table_limits} summarizes the relationship between the four limit methods and the corresponding cardinal invariants kept small by the methods:
		\begin{table}[h]
			\caption{Limit methods and corresponding numbers.}\label{table_limits}
			\begin{tabular}{c|c}
				Limit method &  keep small \\
				
				\hline
				
				UF-limit & $\bb$ \\
				closed-UF-limit   & $\ee^*$ \\
				FAM-limit    & $\none$ \\
				UFI-limit    & $\none$ \\
				\hline 
				
			\end{tabular}
		\end{table}

		Thus, the third item \eqref{item_prop_3} of the previous four properties can be replaced by UFI-limits (see also Table \ref{table_required_properties}). 
		Therefore, it became easier to find the desirable forcing notion for controlling $\none$, since UF-limit, of which UFI-limit is a variant (and even a generalization by considering the interval partition of $\omega$ into singletons), is much simpler than FAM-limit, 
		and evidently there were many known forcing notions with UF-limits (e.g., $\mathbb{E}$, $\tilde{\mathbb{E}}$, prediction forcing $\prp$ (see Definition \ref{dfn_PR}), etc). 
		We finally find a new forcing notion $\lebb$ satisfying all the replaced four properties. Table \ref{table_required_properties} illustrates the relationship between the four required properties and the three forcing notions, the two close candidates $\mathbb{E}$ and $\tilde{\mathbb{E}}$ and our new $\mathbb{LE}$:
		\begin{table}[h]
			\caption{Forcing notions and required properties.}\label{table_required_properties}
			\centering
			\begin{tabular}{c||ccc}
				properties &  $\mathbb{E}$  & $\tilde{\mathbb{E}}$ & $\mathbb{LE}$ \\
				
				\hline
				
				increase $\nonm$ & $\checkmark$ & $\checkmark$ &\textcolor{black}{$\checkmark$}\\
				UF-limit   & $\checkmark$  & $\checkmark$ &  \textcolor{black}{$\checkmark$} \\
				UFI-limit   & $\times$  & $\checkmark$ &  \textcolor{black}{$\checkmark$} \\ 
				$\sigma$-centered  & $\checkmark$  & $\times$ & \textcolor{black}{$\checkmark$} \\
				
				\hline 
				
			\end{tabular}
			
		\end{table}
		
		As a result, we obtain a model of Cicho\'n's maximum with $\none$ and $\cove$.
		
		
		

		\subsection{Structure of the paper}
		In Section \ref{sec_RS_PT} and \ref{sec_UF}, we review the general theory of relational systems and UF-limits respectively, following \cite[Section 2,3]{Yam25}. In Section \ref{sec_UFI}, first we present the theory of UFI-limits, which was originally introduced in \cite{Mej24} and formalized in \cite{CMU24}. Then, we introduce the forcing notion $\lebb$ satisfying all the required properties and this is the essential main contribution of this paper. In Section \ref{sec_separation}, we construct a model of Cicho\'n's maximum with $\none$, $\cove$ and the evasion numbers to prove Theorem \ref{teo_CM_nonE} and \ref{teo_CM_e_nonE}. Finally, we conclude this paper leaving some open questions presented in Section \ref{sec_question}.
		
		\section{Review of Relational systems} \label{sec_RS_PT} 
		In this section, we list the necessary items on relational systems and the preservation theory that will be used later, without an explanation of why they are necessary and how they will be used (see \cite[Section 2]{Yam25}).
		\begin{dfn}
			
			\begin{itemize}
				\item $\R=\langle X,Y,\sqsubset\rangle$ is a relational system if $X$ and $Y$ are non-empty sets and $\sqsubset \subseteq X\times Y$.
				\item We call an element of $X$ a \textit{challenge}, an element of $Y$ a \textit{response}, and ``$x\sqsubset y$''  ``$x$ is \textit{met by }$y$''.
				\item $F\subseteq X$ is $\R$-unbounded if no response meets all challenges in $F$.
				\item $F\subseteq Y$ is $\R$-dominating if every challenge is met by some response in $F$.
				\item $\R$ is non-trivial if $X$ is $\R$-unbounded and $Y$ is $\R$-dominating. For non-trivial $\R$, define
				\begin{itemize}
					\item $\bb(\R)\coloneq\min\{|F|:F\subseteq X \text{ is }\R\text{-unbounded}\}$, and
					\item $\dd(\R)\coloneq\min\{|F|:F\subseteq Y \text{ is }\R\text{-dominating}\}$.
				\end{itemize}

			\end{itemize}
			
		\end{dfn}
		In this section, we assume $\R$ is non-trivial.
		
		\begin{dfnfac}
			\label{dfnfac_RS}
			\begin{enumerate}
				\item For $\mathbf{D}\coloneq\langle \oo,\oo,\leq^*\rangle$, we get $\bb(\mathbf{D})=\bb, \dd(\mathbf{D})=\dd$.
				\item 
				Define $\mathbf{PR}\coloneq\langle\oo, \Pred,\sqsubset^\mathrm{p}\rangle$, where $f\sqsubset^\mathrm{p}\pi:\Leftrightarrow f$ is predicted by $\pi$.
				Also, define $\mathbf{BPR}\coloneq\langle\oo, \Pred,\sqsubset^\mathrm{bp}\rangle$, where $f\sqsubset^\mathrm{bp}\pi:\Leftrightarrow f$ is bounding-predicted by $\pi$ and $\mathbf{PR}_g\coloneq\langle\prod_{n<\omega}g(n), \Pred,\sqsubset^\mathrm{p}\rangle$  where $g\in(\omega+1\setminus2)^\omega$. We have $\bb(\mathbf{PR})=\ee, \dd(\mathbf{PR})=\pr$, $\bb(\mathbf{BPR})=\eeb, \dd(\mathbf{BPR})=\preb$, $\bb(\mathbf{PR}_g)=\ee_g, \dd(\mathbf{PR}_g)=\pr_g$.
				\item\label{item_dfnfac_RS_ideal}
				For an ideal $I$ on $X$ with $[X]^{<\omega}\subseteq I$, define two relational systems $\bar{I}\coloneq\langle I,I,\subseteq\rangle$ and $C_I\coloneq\langle X,I,\in\rangle$.
				We have $\bb(\bar{I})=\addi,\dd(\bar{I})=\cofi$ and $\bb(C_I)=\noni,~\dd(C_I)=\covi$.
				If $I$ is an ideal, then we will write $\R\lq I$ to mean $\R\lq\bar{I}$; and analogously for $\succeq_T$ and $\cong_T$ (see Definition 2.4).
			\end{enumerate}
			
		\end{dfnfac}
		
		\begin{dfn}
			$\R^\bot$ denotes the dual of $\R=\langle X,Y,\sqsubset\rangle$,
			i.e.,
			$\R^\bot\coloneq\langle Y,X,\sqsubset^\bot\rangle$ where $y\sqsubset^\bot x:\Leftrightarrow \lnot(x\sqsubset y)$.
		\end{dfn}

		\begin{dfn}
			
			For relational systems $\R=\langle X,Y,\sqsubset \rangle, \R^{\prime}=\langle X^{\prime},Y^{\prime},\sqsubset^{\prime}~\rangle$,
			$(\Phi_-,\Phi_+):\R\rightarrow\R^\prime$ is a Tukey connection from $\R$ into $\R^{\prime}$ if $\Phi_-:X\rightarrow X^{\prime}$ and $\Phi_+:Y^{\prime}\rightarrow Y$ are functions such that:
			\begin{equation*}
				\forall x\in X~\forall y^{\prime}\in Y^{\prime}~\Phi_-(x)\sqsubset^{\prime} y^{\prime}\Rightarrow x \sqsubset \Phi_{+} (y^{\prime}).
			\end{equation*}

			We write $\R\preceq_T\R^{\prime}$ if there is a Tukey connection from $\R$ into $\R^{\prime}$ and call $\preceq_T$ the Tukey order.
			Tukey equivalence $\R\cong_T\R^{\prime}$ is defined as: $\R\preceq_T\R^{\prime}$ and $\R^{\prime}\preceq_T\R$. 
			
		\end{dfn}
		
		\begin{fac}
			\label{Tukey order and b and d}
			\begin{enumerate}
				\item $\R\preceq_T\R^{\prime}$ implies $(\R^{\prime})^\bot\preceq_T\R^\bot$.
				\item $\R\preceq_T\R^{\prime}$ implies $\mathfrak{b}(\R^{\prime})\leq\mathfrak{b}(\R)$ and $\mathfrak{d}(\R)\leq\mathfrak{d}(\R^{\prime})$.
				\item $\bb(\R^\bot)=\dd(\R)$ and $\dd(\R^\bot)=\bb(\R^\bot)$.
			\end{enumerate}
		\end{fac}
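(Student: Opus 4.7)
The plan is to verify each of the three items directly from the definitions; none requires ingenuity, so the main task is simply to organize the dualization carefully.

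For item (1), given a Tukey connection $(\Phi_-,\Phi_+)\colon\R\to\R'$, I would propose the pair $(\Phi_+,\Phi_-)\colon(\R')^\bot\to\R^\bot$ as a Tukey connection in the reverse direction. What needs to be checked is that for every challenge $y'\in Y'$ of $(\R')^\bot$ and every response $x\in X$ of $\R^\bot$, the implication $\Phi_+(y')\sqsubset^\bot x \Rightarrow y'(\sqsubset')^\bot\Phi_-(x)$ holds. Unfolding the two dual relations, this reads $\lnot(x\sqsubset\Phi_+(y')) \Rightarrow \lnot(\Phi_-(x)\sqsubset' y')$, which is precisely the contrapositive of the Tukey condition defining $(\Phi_-,\Phi_+)$.

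For item (2), I would argue directly with witnesses. Given an $\R$-unbounded family $F\subseteq X$, I would claim that $\Phi_-[F]\subseteq X'$ is $\R'$-unbounded: if some $y'\in Y'$ met every element of $\Phi_-[F]$, then the Tukey condition would force $\Phi_+(y')$ to meet every element of $F$, contradicting unboundedness of $F$. Hence $\bb(\R')\leq|\Phi_-[F]|\leq|F|$, and taking the infimum gives $\bb(\R')\leq\bb(\R)$. For $\dd(\R)\leq\dd(\R')$, the symmetric argument applied to $\Phi_+$ works: if $G\subseteq Y'$ is $\R'$-dominating, then $\Phi_+[G]$ is $\R$-dominating, since any $x\in X$ is met by some $\Phi_+(y')$ with $\Phi_-(x)\sqsubset' y'$. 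Alternatively, this inequality can be derived by combining item (1) with item (3).

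For item (3), the proof is essentially a matter of unfolding the definition of the dual relation. A set $F\subseteq Y$ fails to be $\R^\bot$-unbounded iff some $x\in X$ satisfies $y\sqsubset^\bot x$ for all $y\in F$, i.e., $\lnot(x\sqsubset y)$ for all $y\in F$; thus $F$ is $\R^\bot$-unbounded iff $F$ is $\R$-dominating. Taking the minimum over such $F$ yields $\bb(\R^\bot)=\dd(\R)$, and the symmetric argument gives $\dd(\R^\bot)=\bb(\R)$. The only conceivable obstacle is notational: one must track scrupulously which coordinate of each relational system plays the role of challenge and which plays the role of response after dualization, but once this bookkeeping is carried out the three statements reduce to one-line verifications.
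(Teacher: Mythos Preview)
Your argument is correct in all three parts; the paper itself does not supply a proof of this item (it is stated as a \emph{Fact}, i.e., a standard and well-known result quoted without proof), so there is nothing to compare against. Note that the paper's item~(3) contains a typo---it reads $\dd(\R^\bot)=\bb(\R^\bot)$ where $\bb(\R)$ is clearly intended---and you have silently corrected this in your write-up.
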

		In the rest of this section, we fix an uncountable regular cardinal $\theta$ and a set $A$ of size $\geq\theta$.
		
		\begin{cor}
			\textcolor{white}{a}
			\begin{itemize}
				\item If $\R\lq C_{[A]^{<\theta}}$, then $\theta\leq\bb(\R)$ and $\dd(\R)\leq|A|$.
				\item If $C_{[A]^{<\theta}}\lq \R$, then $\bb(\R)\leq\theta$ and $|A|\leq\dd(\R)$.
			\end{itemize}
		\end{cor}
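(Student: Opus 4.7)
The plan is to turn this into a mechanical application of Fact \ref{Tukey order and b and d}(2), after computing the two relevant cardinals of $C_{[A]^{<\theta}}$ by hand. First I would verify that the symbol $C_{[A]^{<\theta}}$ is legitimate under Definition and Fact \ref{dfnfac_RS}(\ref{item_dfnfac_RS_ideal}) by observing that $[A]^{<\theta}$ is a proper ideal on $A$: it is closed under subsets and, by the regularity of $\theta$, under unions of fewer than $\theta$ of its members; and it is proper since $|A|\geq\theta$. Consequently $\bb(C_{[A]^{<\theta}})=\non([A]^{<\theta})$ and $\dd(C_{[A]^{<\theta}})=\cov([A]^{<\theta})$.

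Next I would evaluate these two invariants. For $\non([A]^{<\theta})$: a subset $B\subseteq A$ fails to lie in $[A]^{<\theta}$ iff $|B|\geq\theta$, and such a $B$ exists because $|A|\geq\theta$, so $\non([A]^{<\theta})=\theta$. For $\cov([A]^{<\theta})$: the cover of $A$ by its singletons gives $\cov([A]^{<\theta})\leq|A|$, while any family of $\kappa<|A|$ elements of $[A]^{<\theta}$ has union of cardinality at most $\max(\kappa,\theta)<|A|$ (invoking the regularity of $\theta$ when $\kappa<\theta$), so such a family cannot cover $A$; hence $\cov([A]^{<\theta})=|A|$.

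With these two equalities in hand, both halves of the corollary follow by a direct appeal to Fact \ref{Tukey order and b and d}(2). From $\R\lq C_{[A]^{<\theta}}$ we read off $\bb(C_{[A]^{<\theta}})\leq\bb(\R)$ and $\dd(\R)\leq\dd(C_{[A]^{<\theta}})$, i.e.\ $\theta\leq\bb(\R)$ and $\dd(\R)\leq|A|$; symmetrically, from $C_{[A]^{<\theta}}\lq\R$ we obtain $\bb(\R)\leq\theta$ and $|A|\leq\dd(\R)$.

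There is no genuine obstacle here; the only substantive point requiring a sentence is the use of the regularity of $\theta$ in the $\cov$ lower bound, and the only convention-checking needed is that $[A]^{<\theta}$ qualifies as an ideal so that the notation $C_{[A]^{<\theta}}$ is covered by Definition and Fact \ref{dfnfac_RS}(\ref{item_dfnfac_RS_ideal}). A two- or three-line proof suffices.
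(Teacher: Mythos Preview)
Your proof is correct and is exactly the argument the paper leaves implicit: the corollary is an immediate consequence of Fact~\ref{Tukey order and b and d}(2) once one knows $\bb(C_{[A]^{<\theta}})=\non([A]^{<\theta})=\theta$ and $\dd(C_{[A]^{<\theta}})=\cov([A]^{<\theta})=|A|$. One tiny wording issue: the displayed bound ``at most $\max(\kappa,\theta)<|A|$'' is literally false in the edge case $\theta=|A|$, $\kappa<\theta$; your parenthetical about regularity shows you have the right case split (union of size $<\theta$ when $\kappa<\theta$, size $\le\kappa$ when $\theta\le\kappa<|A|$), so just phrase it that way.
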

		
		\begin{fac}(\cite[Lemma 1.16.]{forcing_constellations})
			\label{fac_Tukey_order_equivalence_condition}
			Assume $|X|\geq\theta$ where $\R=\langle X,Y,\sqsubset\rangle$.
			\begin{enumerate}
				\item $\R\lq C_{[X]^{<\theta}}$ iff $\bb(\R)\geq\theta$. 
				\item \label{item_small_equiv}
				$C_{[A]^{<\theta}}\lq\R$ iff there exists $\langle x_a:a\in A\rangle$ such that every $y\in Y$ meets only $<\theta$-many $x_a$.
			\end{enumerate}
		\end{fac}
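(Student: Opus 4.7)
The plan is to establish both biconditionals by constructing explicit Tukey connections in each direction; the assertion is essentially formal once the definitions of Tukey reducibility and of the relational system $C_{[X]^{<\theta}}=\langle X,[X]^{<\theta},\in\rangle$ are unwound.

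For item (1), I would deduce the forward direction as a one-line consequence of the preceding corollary: $\R\lq C_{[X]^{<\theta}}$ implies $\theta=\bb(C_{[X]^{<\theta}})\leq\bb(\R)$, where the equality $\non([X]^{<\theta})=\theta$ uses exactly the hypothesis $|X|\geq\theta$. For the converse, I take $\Phi_-=\id_X$; given $F\in[X]^{<\theta}$, since $|F|<\theta\leq\bb(\R)$ the set $F$ is not $\R$-unbounded, so by the Axiom of Choice I pick some $y_F\in Y$ meeting every element of $F$ and set $\Phi_+(F)=y_F$. Then $\Phi_-(x)=x\in F$ forces $x\sqsubset y_F=\Phi_+(F)$, which is the Tukey condition.

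For item (2), the ``if'' direction is immediate: given $\langle x_a:a\in A\rangle$ witnessing the hypothesis, I define $\Phi_-:A\to X$ by $\Phi_-(a)=x_a$ and $\Phi_+:Y\to[A]^{<\theta}$ by $\Phi_+(y)=\{a\in A:x_a\sqsubset y\}$, which indeed lies in $[A]^{<\theta}$ by assumption. The Tukey condition $\Phi_-(a)\sqsubset y\Rightarrow a\in\Phi_+(y)$ is then literally the definition of $\Phi_+$. Conversely, starting from any Tukey connection $(\Phi_-,\Phi_+):C_{[A]^{<\theta}}\to\R$, I set $x_a:=\Phi_-(a)$; for any fixed $y\in Y$ the Tukey inequality yields $\{a\in A:x_a\sqsubset y\}\subseteq\Phi_+(y)\in[A]^{<\theta}$, so the set on the left has size $<\theta$.

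I do not anticipate any substantive obstacle: both directions of both items reduce to short unwindings of definitions, with the Axiom of Choice invoked only in (1). The only points worth flagging are that $|X|\geq\theta$ is precisely what pins down $\non([X]^{<\theta})=\theta$, and that in (2) the witnessing family $\langle x_a\rangle$ is not assumed injective—what the hypothesis and the Tukey connection both bound is the index set $\{a:x_a\sqsubset y\}$ rather than its image under $a\mapsto x_a$.
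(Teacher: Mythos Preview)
Your proof is correct. The paper itself does not include a proof of this statement: it is recorded as a \emph{Fact} with a citation to \cite[Lemma 1.16]{forcing_constellations}, so there is nothing in the paper to compare against. The argument you give is the standard one---direct construction of the Tukey maps from the definitions, using $|X|\geq\theta$ only to identify $\bb(C_{[X]^{<\theta}})=\theta$ in item~(1)---and it goes through without issue.
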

		
		\begin{fac}(\cite[Lemma 1.15.]{forcing_constellations}, \cite[Fact 3.8]{CM24})
			\label{fac_suff_eq_CI_and_I}
			If $|A|^{<\theta}=|A|$,
			then $C_{[A]^{<\theta}}\cong_T [A]^{<\theta}$.
		\end{fac}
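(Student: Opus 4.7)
The plan is to establish two Tukey reductions, $C_{[A]^{<\theta}}\preceq_T [A]^{<\theta}$ and $[A]^{<\theta}\preceq_T C_{[A]^{<\theta}}$. The first one is essentially free; only the second one uses the cardinal-arithmetic hypothesis $|A|^{<\theta}=|A|$.

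For the easy direction $C_{[A]^{<\theta}}\preceq_T [A]^{<\theta}$, define $\Phi_-:A\to [A]^{<\theta}$ by $\Phi_-(a):=\{a\}$ and take $\Phi_+$ to be the identity on $[A]^{<\theta}$. If $\{a\}\subseteq Y$ then $a\in Y$, so the Tukey property is immediate. Note that this direction needs no assumption on $|A|$ or $\theta$ beyond $\theta$ being an uncountable regular cardinal.

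For the substantive direction $[A]^{<\theta}\preceq_T C_{[A]^{<\theta}}$, I would exploit the hypothesis $|A|^{<\theta}=|A|$ to fix a bijection $\pi:[A]^{<\theta}\to A$. Set $\Phi_-:[A]^{<\theta}\to A$ to be $\pi$ itself, and for a response $Y\in[A]^{<\theta}$ (in the system $C_{[A]^{<\theta}}$) define
\[
\Phi_+(Y):=\bigcup\{\pi^{-1}(a):a\in Y\}=\bigcup\{X\in[A]^{<\theta}:\pi(X)\in Y\}.
\]
Then $\Phi_+(Y)$ is a union of fewer than $\theta$ many sets, each of size $<\theta$, and since $\theta$ is regular uncountable we get $\Phi_+(Y)\in[A]^{<\theta}$. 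Now if $\Phi_-(X)=\pi(X)\in Y$, then by construction $X\subseteq\Phi_+(Y)$, which is exactly the required Tukey property.

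The only step requiring mild care is verifying that $\Phi_+(Y)$ still lives in $[A]^{<\theta}$; this is where regularity of $\theta$ is crucial, together with the cardinal assumption, which supplies the bijection $\pi$. No real obstacle arises: once one sees that the bijection $\pi$ lets you code elements of $[A]^{<\theta}$ as points of $A$, the two maps write themselves. Combining both directions yields $C_{[A]^{<\theta}}\cong_T [A]^{<\theta}$.
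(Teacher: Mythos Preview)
Your proof is correct and is precisely the standard argument; the paper itself does not supply a proof of this fact, citing it instead from \cite{forcing_constellations} and \cite{CM24}, where the same bijection-and-union construction appears. There is nothing to add.
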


		\begin{fac}(\cite[Lemma 2.11.]{forcing_constellations})
			\label{fac_cap_V}
			Every ccc poset forces $[A]^{<\theta}\cong_T[A]^{<\theta}\cap V$ and $C_{[A]^{<\theta}}\cong_T C_{[A]^{<\theta}}\cap V$.
			Moreover, $\mathfrak{x}([A]^{<\theta})=\mathfrak{x}^V([A]^{<\theta})$ where $\mathfrak{x}$ represents ``$\add$'', ``$\cov$'', ``$\non$'' or ``$\cof$''.
		\end{fac}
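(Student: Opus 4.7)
My plan is to reduce everything to the standard ccc covering property of $V[G]$ over $V$, and to organize the proof around a single covering lemma that drives both the Tukey equivalences and the moreover clause. Throughout, let $P$ be the ccc poset and $G$ a $P$-generic filter over $V$.

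The key lemma I would establish is: every $B \in ([A]^{<\theta})^{V[G]}$ is contained in some $B' \in ([A]^{<\theta})^V$. Since $P$ is ccc, $\theta$ remains a regular uncountable cardinal in $V[G]$, so I may fix an ordinal $\gamma<\theta$ and a $P$-name $\dot{f}\in V$ forced to enumerate $B$ as $\langle\dot{f}(\alpha):\alpha<\gamma\rangle$. For each $\alpha<\gamma$, set
\[ C_\alpha \coloneq \{a\in A : \exists p\in P,\ p\Vdash \dot{f}(\alpha)=a\}, \]
which lies in $V$ and is countable because conditions forcing distinct values are pairwise incompatible, so ccc bounds their cardinality. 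Then $B'\coloneq \bigcup_{\alpha<\gamma}C_\alpha\in V$ satisfies $B\subseteq B'$ and $|B'|^V\leq |\gamma|\cdot\aleph_0<\theta$ by regularity of $\theta$.

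With the covering lemma in hand, both Tukey equivalences follow a single template. For $[A]^{<\theta}\cong_T [A]^{<\theta}\cap V$ in $V[G]$, in each direction one of the two maps is the inclusion $[A]^{<\theta}\cap V\hookrightarrow [A]^{<\theta}$, while the other sends each $X\in[A]^{<\theta}$ to a chosen cover $X'\in[A]^{<\theta}\cap V$ from the lemma. The Tukey condition $\Phi_-(X)\subseteq Y\Rightarrow X\subseteq \Phi_+(Y)$ then collapses to the two-step chain $X\subseteq X'\subseteq Y$ or $X\subseteq Y\subseteq Y'$. For $C_{[A]^{<\theta}}\cong_T C_{[A]^{<\theta}}\cap V$ the same scheme works with $\Phi_-$ kept as the identity on $A$ (the challenge side is unchanged) and $\Phi_+$ either the inclusion or the cover map on the response side.

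For the moreover clause, Fact \ref{Tukey order and b and d} converts each Tukey equivalence into equalities of $\bb$ and $\dd$ evaluated in $V[G]$, which via Definition and Fact \ref{dfnfac_RS}(\ref{item_dfnfac_RS_ideal}) translate into the required equalities of $\add,\cof,\non,\cov$ between $([A]^{<\theta})^{V[G]}$ and $([A]^{<\theta})^V$ as computed in $V[G]$. It remains to verify that these invariants of the $V$-valued system coincide in $V$ and $V[G]$. One direction is immediate because any witnessing (unbounded or dominating) family in $V$ remains witnessing in $V[G]$. For the reverse I would rerun the ccc mixing idea of the covering lemma: a $V[G]$-witnessing family of size $\kappa\geq\aleph_1$ can be refined to a $V$-family of size $\kappa\cdot\aleph_0=\kappa$ by replacing each name for an element by the countable set of its ground-model possibilities, and any bound/cover for the refined $V$-family would bound/cover the original. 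The genuine technical content is concentrated in the covering lemma; the rest is routine Tukey bookkeeping and standard ccc absoluteness, so the main obstacle amounts to a clean choice of names in the covering step.
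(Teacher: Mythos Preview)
The paper does not supply its own proof of this statement; it is recorded as a Fact with a citation to \cite{forcing_constellations} and used as a black box. Your argument is correct and is the standard route: the ccc covering property (every set in $([A]^{<\theta})^{V[G]}$ is contained in some set in $([A]^{<\theta})^V$) is precisely the engine behind both Tukey equivalences, and the maps you describe (identity/inclusion on one side, cover map on the other) verify the Tukey conditions as you indicate. Your handling of the moreover clause is also sound, including the observation that one must separately check that the invariants of the ground-model relational system agree in $V$ and $V[G]$; the ``replace each name by its countable set of possible values'' argument does this. One minor remark: the hypothesis $\kappa\geq\aleph_1$ in your last step could be relaxed to $\kappa\geq\aleph_0$ for the equation $\kappa\cdot\aleph_0=\kappa$, but this is harmless here since under the standing assumption that $\theta$ is regular uncountable all four invariants of $[A]^{<\theta}$ are at least $\theta\geq\aleph_1$ anyway.
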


		%
		\begin{fac}(\cite[Corollary 2.16, also Example 2.15]{Yam25})
			\label{fac_smallness_for_addn_and_covn_and_nonm}
			Let $\p$ be a fsi of ccc forcings of length $\gamma\geq\theta$.
			
			\begin{enumerate}
				\item Assume that each iterand is either:
				\begin{itemize}
					\item of size $<\theta$,
					\item a subalgebra of random forcing, or
					\item $\sigma$-centered.
				\end{itemize}
				Then, $\p$ forces $C_{[\gamma]^{<\theta}}\lq\n$,
				in particular, $\addn\leq\theta$.
				\item  Assume that each iterand is either:
				\begin{itemize}
					\item of size $<\theta$, or
					\item $\sigma$-centered.
				\end{itemize}
				Then, $\p$ forces $C_{[\gamma]^{<\theta}}\lq C_\n^\bot$,
				in particular, $\covn\leq\theta$.
				\item Assume that each iterand is:
				\begin{itemize}
					\item of size $<\theta$.
				\end{itemize}
				Then, $\p$ forces $C_{[\gamma]^{<\theta}}\lq C_\m$,
				in particular, $\nonm\leq\theta$.
			\end{enumerate}

		\end{fac}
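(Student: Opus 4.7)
The plan is to derive each item from the ``goodness'' framework for finite-support iterations of ccc forcings, in the style of Judah--Shelah, Brendle, and Mej\'ia. Recall that a ccc forcing $\p$ is $\theta$-$\R$-good for a relational system $\R$ of Polish type if every $\p$-name for a response in $\R$ is dominated, in the Tukey sense, by a ground-model family of fewer than $\theta$ challenges. Goodness is preserved by fsi of ccc forcings; and when the iteration has length $\gamma\geq\theta$ and one can extract a ``canonical generic challenge'' $\dot x_\alpha\in X$ at each stage, the sequence $\langle \dot x_\alpha:\alpha<\gamma\rangle$ witnesses the criterion of Fact \ref{fac_Tukey_order_equivalence_condition}(\ref{item_small_equiv}), forcing $C_{[\gamma]^{<\theta}}\lq\R$ in the final extension.

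Concretely, I would fix the target relational system for each item: $\R=\bar\n$ for (1), since $\bb(\bar\n)=\addn$; $\R=C_\n^\bot$ for (2), since $\bb(C_\n^\bot)=\dd(C_\n)=\covn$ by Fact \ref{Tukey order and b and d}(3); and $\R=C_\m$ for (3). I would then verify the required $\theta$-$\R$-goodness for each class of allowed iterand: forcings of size $<\theta$ are trivially good for any Polish-type $\R$ by a counting/reflection argument; subalgebras of random forcing are $\theta$-$\bar\n$-good via Bartoszynski's slalom characterization of $\addn$; and $\sigma$-centered forcings are both $\theta$-$\bar\n$-good and $\theta$-$C_\n^\bot$-good, proved by enumerating centered components and showing that each component is concentrated within a small family of null sets (respectively, that each centered-generic real is contained in small-family-many ground-model null sets). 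The canonical generic $\dot x_\alpha$ would then be the natural object added by $\dot\q_\alpha$---a random null set, a $\sigma$-centered-generic real, or a Cohen real---or a trivial constant when $\dot\q_\alpha$ is small.

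The main obstacle is the $\sigma$-centered case: one needs a combinatorial closure argument on centered pieces to show that every name for a null set (resp.\ for a real) in a $\sigma$-centered extension can be trapped by fewer than $\theta$ ground-model null sets (resp.\ reals). Everything else---the size-$<\theta$ verification, the random-$\addn$ case, and the fsi-preservation theorem---is standard but technical, and already carried out in \cite{Yam24}. Once goodness is established, the bounds $\addn,\covn,\nonm\leq\theta$ follow from the Tukey connection $C_{[\gamma]^{<\theta}}\lq\R$ together with $\bb(C_{[\gamma]^{<\theta}})=\theta$ and Fact \ref{Tukey order and b and d}(2).
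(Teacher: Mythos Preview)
The paper does not give its own proof of this statement; it is stated as a Fact and cited from \cite[Corollary 2.16, Example 2.15]{Yam24}. Your goodness-framework sketch is indeed the standard route and is what lies behind that reference, so the overall plan is correct.

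There is, however, one muddled point. You write that the canonical generic challenge $\dot x_\alpha$ should be ``the natural object added by $\dot\q_\alpha$---a random null set, a $\sigma$-centered-generic real, or a Cohen real---or a trivial constant when $\dot\q_\alpha$ is small.'' This is not how the argument runs: you do not choose heterogeneous witnesses depending on the iterand type, and a ``trivial constant'' at a small stage would contribute nothing to the Tukey criterion of Fact~\ref{fac_Tukey_order_equivalence_condition}(\ref{item_small_equiv}). The correct picture is that one first replaces each of $\bar\n$, $C_\n^\bot$, $C_\m$ by a Tukey-equivalent \emph{Polish} relational system $\R$ (e.g., Bartoszy\'nski's slalom system for $\addn$, an antilocalization or eventually-different system for $\covn$, and $C_\m$ itself for $\nonm$). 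For such $\R$, \emph{any} nontrivial fsi of ccc forcings adds, cofinally along $\gamma$, Cohen reals that are $\R$-unbounded over all earlier stages---regardless of what the individual iterands are. These Cohen reals (suitably coded as challenges of $\R$) are the uniform witnesses $\langle \dot x_\alpha:\alpha<\gamma\rangle$. The $\theta$-$\R$-goodness of each iterand is then exactly what guarantees that no single response in the final extension can meet $\theta$-many of them. Once you fix this, the rest of your outline---verifying goodness for small, random, and $\sigma$-centered posets for the appropriate $\R$, and invoking preservation under fsi---is right.
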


		\section{Review of standard UF-limits}\label{sec_UF}
		This section is a review of \cite[Section 3]{Yam25} and we include it for the sake of completeness and in order to clarify the notation.
		\begin{dfn}(\cite[Section 5]{Mej19})
			Let $\Gamma$ be a class for subsets of posets,
			i.e., $\Gamma\in\prod_{\p}\mathcal{P}(\mathcal{P}(\p))$, a (class) function. (E.g., $\Gamma=\Lambda({\text{centered}})\coloneq$ ``centered'' is an example of a class for subsets of posets and in this case $\Gamma(\p)$ denotes the set of all centered subsets of $\p$ for each poset $\p$.)
			\begin{itemize}

				\item A poset $\p$ is $\mu$-$\Gamma$-covered if $\p$ is a union of $\leq\mu$-many subsets in $\Gamma(\p)$.
				As usual, when $\mu=\aleph_0$, we use ``$\sigma$-$\Gamma$-covered'' instead of ``$\aleph_0$-$\Gamma$-covered''. Moreover, we often just say ``$\mu$-$\Gamma$'' instead of ``$\mu$-$\Gamma$-covered''.
				\item Abusing notation, we write ``$\Gamma\subseteq\Gamma^\prime$'' if $\Gamma(\p)\subseteq\Gamma^\prime(\p)$ holds for every poset $\p$.
			\end{itemize}
		\end{dfn}
		

		In this paper, an ``ultrafilter'' means a non-principal ultrafilter on $\omega$.
		\begin{dfn}
			\label{dfn_UF_linked} 
			Let $D$ be an ultrafilter and $\p$ be a poset.
			\begin{enumerate}
				\item $Q\subseteq \p$ is $D$-lim-linked ($\in\Lambda^\mathrm{lim}_D(\p)$) if there exist a $\p$-name $\dot{D}^\prime$ of an ultrafilter extending $D$ and a function $\lim^D\colon Q^\omega\to\p$ such that for any countable sequence 
				$\bar{q}=\langle q_m:m<\omega\rangle\in Q^\omega$, 
				\begin{equation}
					\textstyle{\lim^D\bar{q}} \Vdash \{m<\omega:q_m \in \dot{G}\}\in \dot{D}^\prime.
				\end{equation}

				Moreover, if $\ran(\lim^D)\subseteq Q$, we say $Q$ is c-$D$-lim-linked (closed-$D$-lim-linked, $\in\Lambda^\mathrm{lim}_{\mathrm{c}D}(\p)$).
				\item $Q$ is (c-)uf-lim-linked (short for (closed-)ultrafilter-limit-linked) if $Q$ is (c-)$D$-lim-linked for every ultrafilter $D$.
				\item $\Lambda^\mathrm{lim}_\mathrm{uf}\coloneq\bigcap_D\Lambda^\mathrm{lim}_D$ and 
				$\Lambda^\mathrm{lim}_\mathrm{cuf}\coloneq\bigcap_D\Lambda^\mathrm{lim}_{\mathrm{c}D}$.

				
			\end{enumerate}

			We often say ``$\p$ has (c-)uf-limits'' instead of ``$\p$ is $\sigma$-(c-)uf-lim-linked''.

		\end{dfn}

		\begin{exa}
			\label{exa_size_linked}
			Singletons are c-uf-lim-linked and hence every poset $\p$ is $|\p|$-c-uf-lim-linked. 
		\end{exa}


		\begin{dfn}(\cite[Definition 4.11]{Mej24Vienna})
			\label{dfn_Gamma_iteration}
			\begin{itemize}
				\item A $\kappa$-$\Gamma$-iteration is a fsi $\langle(\p_\eta,\qd_\xi):\eta\leq\gamma,\xi<\gamma\rangle $ of ccc forcings, with witnesses $\langle\p_\xi^-:\xi<\gamma\rangle$, $\langle\theta_\xi:\xi<\gamma\rangle$ and $\langle\dot{Q}_{\xi,\zeta}:\zeta<\theta_\xi,\xi<\gamma\rangle$ satisfying for all $\xi<\gamma$:
				\begin{enumerate}
					\item $\p^-_\xi\lessdot\p_\xi$.
					\item $\theta_\xi$ is a cardinal of size $<\kappa$.
					\item \label{item_Q_is_P_minus_name}
					$\qd_\xi$ and $\langle\dot{Q}_{\xi,\zeta}:\zeta<\theta_\xi\rangle$ are $\p^-_\xi$-names and $\p^-_\xi$ forces that 
					$\bigcup_{\zeta<\theta_\xi}\dot{Q}_{\xi,\zeta}=\qd_\xi$ and $\dot{Q}_{\xi,\zeta}\in\Gamma(\qd_\xi)$ for each $\zeta<\theta_\xi$.
				\end{enumerate}
				\item $\xi<\gamma$ is a trivial stage if $\Vdash_{\p^-_\xi}|\dot{Q}_{\xi,\zeta}|=1$ for all $\zeta<\theta_\xi$. $S^-$ is the set of all trivial stages and $S^+\coloneq\gamma\setminus S^-$.
				\item A guardrail for the iteration is a function $h\in\prod_{\xi<\gamma}\theta_\xi$.
				\item $H\subseteq\prod_{\xi<\gamma}\theta_\xi$ is complete if any countable partial function in $\prod_{\xi<\gamma}\theta_\xi$ is extended to some (total) function in $H$.
				\item $\p^h_\eta$ is the set of conditions $p\in\p_\eta$ following $h$, i.e., for each $\xi\in\dom(p)$, $p(\xi)$ is a $\p^-_\xi$-name 
				and $\Vdash_{\p^-_\xi}p(\xi)\in \dot{Q}_{\xi,h(\xi)}$.  	
			\end{itemize}
		\end{dfn}
		
		%
		%
		%
		
		
		\begin{lem}(\cite[Corollary 3.8]{Yam25},\cite{EK65})
			\label{lem_complete}
			Assume $\aleph_1\leq\mu\leq|\gamma|\leq2^\mu$ and $\mu^+=\kappa$. 
			Then, for any $\langle\theta_\xi<\kappa:\xi<\gamma\rangle$, there exists a complete set of guardrails  of size $\leq\mu^{\aleph_0}$ which works for each $\kappa$-$\Gamma$-iteration of length $\gamma$ using $\langle\theta_\xi:\xi<\gamma\rangle$.
		\end{lem}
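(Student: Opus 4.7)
The content of this lemma is essentially combinatorial and amounts to the Engelking--Kar{\l}owicz theorem (\cite{EK65}); the particular formulation here is that of \cite[Corollary 3.8]{Yam24}. The plan is to first produce a complete family of functions into the uniform product $\mu^\gamma$ and then project it down to $\prod_{\xi<\gamma}\theta_\xi$, which is possible because each $\theta_\xi<\kappa=\mu^+$ satisfies $\theta_\xi\leq\mu$.

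For the uniform case, using $|\gamma|\leq 2^\mu$, fix an injection $\langle s_\xi:\xi<\gamma\rangle\subseteq 2^\mu$. For each triple $(A,B,g)$ with $A\in[\mu]^{\leq\aleph_0}$, $B\in[2^A]^{\leq\aleph_0}$ and $g\colon B\to\mu$, define $h_{A,B,g}\in\mu^\gamma$ by $h_{A,B,g}(\xi)=g(s_\xi\on A)$ whenever $s_\xi\on A\in B$ and $h_{A,B,g}(\xi)=0$ otherwise, and let $\mathcal{F}$ be the collection of all such functions. Then
\[
|\mathcal{F}|\leq \mu^{\aleph_0}\cdot(2^{\aleph_0})^{\aleph_0}\cdot \mu^{\aleph_0}=\mu^{\aleph_0},
\]
using $\mu\geq\aleph_1\geq2$ which yields $2^{\aleph_0}\leq\mu^{\aleph_0}$.

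To verify completeness of $\mathcal{F}$, given a countable partial function $\phi\colon\gamma\rightharpoonup\mu$ with domain $\Xi$, pick for each pair $\xi\neq\xi'$ in $\Xi$ a coordinate $\alpha_{\xi,\xi'}\in\mu$ where $s_\xi$ and $s_{\xi'}$ disagree, and let $A$ be the (countable) set of all such coordinates. Then $\xi\mapsto s_\xi\on A$ is injective on $\Xi$; set $B\coloneq\{s_\xi\on A:\xi\in\Xi\}$ and define $g(s_\xi\on A)\coloneq\phi(\xi)$. The resulting function $h_{A,B,g}\in\mathcal{F}$ extends $\phi$.

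Finally, to descend to $\prod_{\xi<\gamma}\theta_\xi$, let $H\coloneq\{h^*:h\in\mathcal{F}\}$ where $h^*(\xi)\coloneq h(\xi)$ if $h(\xi)<\theta_\xi$ and $0$ otherwise. Then $|H|\leq\mu^{\aleph_0}$, and any countable partial function $\phi$ with $\phi(\xi)<\theta_\xi$ on its domain is in particular a partial function into $\mu$, so some $h\in\mathcal{F}$ extends it; because $\phi(\xi)<\theta_\xi$ on $\dom\phi$, the truncation $h^*$ still extends $\phi$. The only delicate point in the argument is the cardinality bound for $\mathcal{F}$: one must restrict $g$ to a \emph{countable} subset of $2^A$ rather than letting $g$ range over all of $\mu^{2^A}$, as the latter would produce the unworkable bound $\mu^{2^{\aleph_0}}$.
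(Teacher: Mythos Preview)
Your proof is correct and is precisely the standard Engelking--Kar{\l}owicz construction that the paper invokes by citation (the paper itself gives no proof, deferring to \cite{EK65} and \cite[Corollary 3.8]{Yam24}). The only cosmetic point is that you should perhaps note $\theta_\xi\geq1$ so that the truncation to $0$ lands in $\prod_{\xi<\gamma}\theta_\xi$, but this is implicit in the context.
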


		In this section, let $\Gamma_\mathrm{uf}$ represent $\Lambda^\mathrm{lim}_\mathrm{uf}$ or $\Lambda^\mathrm{lim}_\mathrm{cuf}$.
		
		\begin{dfn}(\cite[Definition 4.15]{Mej24Vienna})
			\label{dfn_UF_iteration}
			A $\kappa$-$\Gamma_\mathrm{uf}$-iteration has $\Gamma_\mathrm{uf}$-limits on $H$ if
			\begin{enumerate}
				\item $H\subseteq\prod_{\xi<\gamma}\theta_\xi$ is a set of guardrails.
				\item For $h\in H$, $\langle \dot{D}^h_\xi:\xi\leq\gamma \rangle$ is a sequence such that $\dot{D}^h_\xi$ is a $\p_\xi$-name of an ultrafilter. 
				\item If $\xi<\eta\leq\gamma$, then $\Vdash_{\p_\eta}\dot{D}^h_\xi\subseteq\dot{D}^h_\eta$.
				
				\item \label{item_D^-}
				For $ \xi\in S^+$, $\Vdash_{\p_\xi} (\dot{D}^h_\xi)^-\in V^{\p^-_\xi}$ where $(\dot{D}^h_\xi)^-\coloneq\dot{D}^h_\xi\cap V^{\p^-_\xi}$.
				\item Whenever $\langle \xi_m:m<\omega \rangle\subseteq \gamma$ and $\bar{q}=\langle \dot{q}_m:m<\omega\rangle$ satisfying 
				
				$\Vdash_{\p^-_{\xi_m}}\dot{q}_m\in\dot{Q}_{\xi_m,h(\xi_m)}$ for each $m<\omega$:
				\begin{enumerate}
					\item If $\langle \xi_m:m<\omega \rangle$ is constant with value $\xi$, then
					\begin{equation}
						\label{eq_constant}
						\Vdash_{\p_\xi}\textstyle{\lim^{(\dot{D}^h_\xi)^-}}\bar{q}\Vdash_{\qd_\xi}\{m<\omega:\dot{q}_m\in \dot{H}_\xi\}
						\in\dot{D}^h_{\xi+1}.
					\end{equation}
					($\dot{H}_\xi$ denotes the canonical name of $\qd_\xi$-generic filter over $V^{\p_\xi}$ and abusing notation, for $\xi\in S^-$ we use $\lim^{(\dot{D}^h_\xi)^-}\bar{q}$ to denote the constant value of $\bar{q}$.)
					\item If $\langle \xi_m:m<\omega \rangle$ is strictly increasing, then
					\begin{equation}
						\label{eq_increasing}
						\Vdash_{\p_\gamma}\{m<\omega:\dot{q}_m\in \dot{G}_\gamma\}\in\dot{D}^h_\gamma.
					\end{equation}
				\end{enumerate}
				
			\end{enumerate}
		\end{dfn}

		\begin{lem}(\cite[Lemma 3.11]{Yam25}, \cite[Theorem 4.18]{Mej24Vienna})
			\label{lem_uf_const_succ}
			Let $\p_{\gamma+1}$ be a $\kappa$-$\Gamma_\mathrm{uf}$-iteration (of length $\gamma+1$) and suppose $\p_\gamma=\p_{\gamma+1}\on\gamma$ has $\Gamma_\mathrm{uf}$-limits on $H$.
			If $\gamma\in S^-$, or if $\gamma\in S^+$ and:
			\begin{equation}
				\label{eq_minus}
				\Vdash_{\p_\gamma} (\dot{D}^h_\gamma)^-\in V^{\p^-_\gamma}\text{ for all }h\in H,
			\end{equation}
			then we can find $\{\dot{D}^h_{\gamma+1}:h\in H\}$ witnessing that $\p_{\gamma+1}$ has $\Gamma_\mathrm{uf}$-limits on $H$.
		\end{lem}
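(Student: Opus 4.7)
The plan is to split the argument according to whether $\gamma\in S^-$ or $\gamma\in S^+$.

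For $\gamma\in S^-$, every $\dot Q_{\gamma,\zeta}$ is forced to be a singleton, so any $\bar q=\langle\dot q_m:m<\omega\rangle$ with $\dot q_m\in\dot Q_{\gamma,h(\gamma)}$ is constant with value some $\dot q_0$, and $\{m:\dot q_0\in\dot H_\gamma\}=\omega$. I would take $\dot D^h_{\gamma+1}$ to be any $\p_{\gamma+1}$-name for an ultrafilter extending $\dot D^h_\gamma$. Then \eqref{eq_constant} is vacuous, and \eqref{eq_increasing} reduces to the inductive hypothesis at stage $\gamma$, since a strictly increasing sequence $\langle\xi_m:m<\omega\rangle$ of stages in $\gamma+1$ has at most one index equal to $\gamma$, so after discarding it the remaining indices are $<\gamma$ and the corresponding set lies in $\dot D^h_\gamma\subseteq\dot D^h_{\gamma+1}$; ultrafilter membership is unaffected by the finite modification.

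The real work is for $\gamma\in S^+$. Hypothesis \eqref{eq_minus} places $(\dot D^h_\gamma)^-$ in $V^{\p^-_\gamma}$, and $\dot Q_{\gamma,h(\gamma)}\in\Gamma_\mathrm{uf}(\qd_\gamma)$ in $V^{\p^-_\gamma}$, so applying uf-lim-linkedness to the ultrafilter $(\dot D^h_\gamma)^-$ yields a $\qd_\gamma$-name $\dot D'$ over $V^{\p^-_\gamma}$ for an ultrafilter extending $(\dot D^h_\gamma)^-$, together with a function $\lim^{(\dot D^h_\gamma)^-}$ witnessing the limit property. I would then let $\dot D^h_{\gamma+1}$ be any $\p_{\gamma+1}$-name for an ultrafilter extending the filter generated by $\dot D^h_\gamma\cup\dot D'$. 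Conditions \eqref{eq_constant} and \eqref{eq_increasing} follow immediately, the first from the defining property of $\dot D'$ (now absorbed into $\dot D^h_{\gamma+1}$), the second as in the trivial case.

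The hard part is proving that the generated filter is proper in $V^{\p_{\gamma+1}}$. The structural observation is that since $\qd_\gamma$ is a $\p^-_\gamma$-name, the iteration $\p_\gamma*\qd_\gamma$ is equivalent to $\p^-_\gamma*\bigl((\p_\gamma/\p^-_\gamma)\times\qd_\gamma\bigr)$, so $V^{\p_\gamma}$ and $V^{\p^-_\gamma*\qd_\gamma}$ are mutually generic extensions of $V^{\p^-_\gamma}$ inside $V^{\p_{\gamma+1}}$. Suppose for contradiction that $A\in\dot D^h_\gamma$ and $B\in\dot D'$ satisfy $A\cap B\subseteq[0,n_0)$ for some $n_0<\omega$. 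Choose $V^{\p^-_\gamma}$-names $\tau_A,\tau_B$ in the product $(\p_\gamma/\p^-_\gamma)\times\qd_\gamma$ and a condition $(p',q')$ forcing $\tau_A\cap\tau_B\subseteq[0,n_0)$. For each $n\geq n_0$ no product extension of $(p',q')$ can force both $n\in\tau_A$ and $n\in\tau_B$, so by a splitting argument in the product either $p'\Vdash n\notin\tau_A$ or $q'\Vdash n\notin\tau_B$. This produces a partition $[n_0,\omega)=X_1\cup X_2$ with $X_1,X_2\in V^{\p^-_\gamma}$, $p'\Vdash\tau_A\cap X_1=\emptyset$, and $q'\Vdash\tau_B\cap X_2=\emptyset$. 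Since $A$ and $B$ are $(\dot D^h_\gamma)^-$-positive (as elements of ultrafilters extending $(\dot D^h_\gamma)^-$), neither $X_1$ nor $X_2$ lies in $(\dot D^h_\gamma)^-$, hence both $\omega\setminus X_1$ and $\omega\setminus X_2$ do; but their intersection is $[0,n_0)$, contradicting non-principality. This closes the compatibility argument and completes the proof.
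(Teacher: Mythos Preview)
Your proof is correct. The paper itself does not prove this lemma (it is cited from \cite{Yam24} and \cite{Mej24Vienna}), but it does prove the parallel $\mathrm{UFI}$ version, Lemma~\ref{lem_ufi_const_succ}, and that proof follows the same outline as yours up to the point where the two ultrafilters $\dot D^h_\gamma$ and $\dot D'$ must be amalgamated. There the paper simply invokes Fact~\ref{lem_UF_upward_directed} (the abstract ``upward-directed'' lemma for ultrafilters over nested models), applied with $M=V^{\p^-_\gamma}$, $N=V^{\p_\gamma}$, $\p=\qd_\gamma$, $D_0=(\dot D^h_\gamma)^-$, $D_0'=\dot D^h_\gamma$, $\dot D_1=\dot D'$. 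Your product-forcing argument is essentially an inline proof of the special case of that fact needed here: you exploit the additional structure that $V^{\p_\gamma}$ is a \emph{generic} extension of $V^{\p^-_\gamma}$ to factor $\p_{\gamma+1}$ as $\p^-_\gamma*\bigl((\p_\gamma/\p^-_\gamma)\times\qd_\gamma\bigr)$ and then run a clean splitting argument in the ground model $V^{\p^-_\gamma}$. The paper's route is more modular; yours is more self-contained and avoids quoting an external lemma. Conceptually they are the same.

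Two minor remarks. First, in the $S^-$ case your treatment of \eqref{eq_increasing} is slightly over-cautious: a strictly increasing $\omega$-sequence in $\gamma+1$ can never hit $\gamma$ (there would have to be a next term), so no index needs discarding. Second, your $X_1,X_2$ form a cover of $[n_0,\omega)$ rather than a partition, but this is harmless since you only use $(\omega\setminus X_1)\cap(\omega\setminus X_2)=[0,n_0)$, which follows from $X_1\cup X_2=[n_0,\omega)$.
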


		\begin{lem}(\cite[Lemma 3.13]{Yam25},\cite[Theorem 4.19]{Mej24Vienna})
			\label{lem_uf_const_lim}
			Let $\gamma$ be limit and 
			$\p_\gamma$ be a $\kappa$-$\left(\Lambda(\mathrm{centered})\cap\Gamma_\mathrm{uf}\right) $-iteration.
			If $\langle \dot{D}^h_\xi:\xi<\gamma, h\in H\rangle$ witnesses that for any $\xi<\gamma$, $\p_\xi=\p_\gamma\on\xi$ has $\Gamma_\mathrm{uf}$-limits on $H$,  
			then we can find $\langle\dot{D}^h_\gamma:h\in H\rangle$ such that $\langle \dot{D}^h_\xi:\xi\leq\gamma, h\in H\rangle$ witnesses $\p_\gamma$ has $\Gamma_\mathrm{uf}$-limits on $H$.
		\end{lem}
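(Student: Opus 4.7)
The plan is to define $\dot D^h_\gamma$ (for each $h\in H$) as a $\p_\gamma$-name of an ultrafilter extending the filter base $\dot F^h_\gamma$ generated by (a) $\bigcup_{\xi<\gamma}\dot D^h_\xi$, and (b) all sets $\{m<\omega:\dot q_m\in\dot G_\gamma\}$, where $\langle \xi_m:m<\omega\rangle$ is strictly increasing in $\gamma$ and $\bar q=\langle\dot q_m\rangle$ satisfies $\Vdash_{\p^-_{\xi_m}}\dot q_m\in\dot Q_{\xi_m,h(\xi_m)}$ (so that each $\dot q_m$ is naturally identified with a condition in $\p_\gamma$ supported at $\{\xi_m\}$). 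Including (a) secures $\dot D^h_\xi\subseteq\dot D^h_\gamma$ for all $\xi<\gamma$, and (b) is precisely what \eqref{eq_increasing} demands. The conditions \eqref{item_D^-} and \eqref{eq_constant} only concern stages $\xi<\gamma$ (since $\gamma$ is a limit) and are already given by hypothesis. So the entire proof reduces to verifying that $\p_\gamma$ forces $\dot F^h_\gamma$ to have the finite intersection property; once that is established, any ultrafilter extension in the generic model furnishes $\dot D^h_\gamma$.

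For the FIP claim, fix $p\in\p_\gamma$, type-(a) witnesses $A_i\in\dot D^h_{\xi_i}$ for $i<k$, type-(b) witnesses $B_\ell=\{m:\dot q^\ell_m\in\dot G_\gamma\}$ with stage-sequences $\bar\xi^\ell$ for $\ell<r$, and $N<\omega$. I aim to produce $p^*\leq p$ and $n\geq N$ forcing $n\in A_i$ for every $i$ and $\dot q^\ell_n\in\dot G_\gamma$ for every $\ell$. By the monotonicity $\dot D^h_\xi\subseteq\dot D^h_\eta$ for $\xi\leq\eta$, first consolidate the $A_i$'s into a single $A^*\in\dot D^h_{\xi^*}$ with $\xi^*:=\max_i\xi_i$. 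By cofinally thinning each strictly increasing $\bar\xi^\ell$, assume $\xi^\ell_m>\max(\xi^*,\max\supp(p))$ for all $m,\ell$. Working inside $\p_{\xi^*}$ and using that $A^*$ is forced infinite, find $p'\leq p\on\xi^*$ in $\p_{\xi^*}$ and some $n\geq N$ with $p'\Vdash n\in A^*$.

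The main combinatorial step is then to assemble $p^*$ from $p'$ and the $\dot q^\ell_n$'s. Set $S:=\{\xi^\ell_n:\ell<r\}$; by the thinning, $S\subseteq(\xi^*,\gamma)$ and $S\cap\supp(p)=\emptyset$. For each $\xi\in S$ the finite family $\{\dot q^\ell_n:\xi^\ell_n=\xi\}$ is forced by $\p^-_\xi$ into the \emph{centered} piece $\dot Q_{\xi,h(\xi)}$, so there is a $\p^-_\xi$-name $\dot r_\xi$ of a common lower bound lying in the same piece. Now define $p^*\on\xi^*:=p'$, $p^*(\xi):=p(\xi)$ for $\xi\in\supp(p)\cap[\xi^*,\gamma)$, and $p^*(\xi):=\dot r_\xi$ for $\xi\in S$. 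By finite support and the disjointness $S\cap\supp(p)=\emptyset$, this is a well-defined condition refining $p$, and a direct unwinding gives $p^*\Vdash n\in A^*\cap\bigcap_\ell B_\ell$, establishing the FIP.

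The main obstacle is precisely this assembly: distinct sequences $\bar q^\ell,\bar q^{\ell'}$ may collide at a common stage $\xi^\ell_n=\xi^{\ell'}_n$, so the relevant conditions must be merged \emph{inside} the piece $\dot Q_{\xi,h(\xi)}$. This is where $\sigma$-centeredness of the iterands along the guardrail $h$ is indispensable and cannot be replaced by bare uf-linkedness---finite support handles distinct coordinates, while centeredness handles the coincidences. Thus the hypothesis $\Gamma=\Lambda(\mathrm{centered})\cap\Gamma_\mathrm{uf}$ is tailored precisely to the needs of the limit-step construction.
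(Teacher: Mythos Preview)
Your argument is correct and follows the standard approach, matching the paper's proof of the analogous Lemma~\ref{lem_ufi_const_lim} (the paper does not itself prove Lemma~\ref{lem_uf_const_lim}, deferring to the cited references). Two small imprecisions are worth flagging. First, the ``cofinal thinning'' step should be read as choosing the single witness $n$ large enough that $\xi^\ell_n>\max(\xi^*,\max\supp(p))$ for every $\ell<r$, not as replacing each $\bar q^\ell$ by a subsequence (that would change $B_\ell$ and destroy the common index you need); this implicitly uses that any type-(b) sequence $\bar\xi^\ell$ bounded in $\gamma$ already has $B_\ell\in\dot D^h_\eta$ for some $\eta<\gamma$ by the inductive hypothesis \eqref{eq_increasing}, so it can be absorbed into the type-(a) part --- in particular, when $\cf(\gamma)>\omega$ all type-(b) sequences are bounded and the step is vacuous. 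Second, centeredness of $\dot Q_{\xi,h(\xi)}$ only guarantees a common lower bound $\dot r_\xi$ in $\qd_\xi$, not in the piece $\dot Q_{\xi,h(\xi)}$ itself; but you never actually use the stronger claim, so this is harmless.
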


		\begin{ass}
			\label{ass_first_Cohen}
			\begin{enumerate}
				\item $\kappa\leq \lambda$ are uncountable cardinals, $\kappa$ is regular and $\gamma=\lambda+\lambda$.
				\item $\p_\gamma$ is a $\kappa$-$\Gamma_\mathrm{uf}$-iteration with $\Gamma_\mathrm{uf}$-limits on $H$ (with the same parameters as in Definition \ref{dfn_Gamma_iteration} and \ref{dfn_UF_iteration}).
				\item $H$ is complete and $|H|<\kappa$.
				\item For $\xi<\lambda$, $\Vdash_{\p^-_\xi}\qd_\xi=\mathbb{C}$, the Cohen forcing. Note that $\mathbb{C}$ is $\kappa$-$\Gamma_\mathrm{uf}$-linked by Example \ref{exa_size_linked}.
			\end{enumerate}
		\end{ass}
		
		\begin{thm}(\cite[Lemma 1.31]{GKS},\cite[Theorem 3.21]{Yam25},)
			\label{thm_uf_limit_keeps_b_small}
			Suppose Assumption \ref{ass_first_Cohen} and consider the case $\Gamma_\mathrm{uf}=\Lambda^\mathrm{lim}_\mathrm{uf}$.
			Then, $\p_\gamma$ forces $C_{[\lambda]^{<\kappa}}\lq \mathbf{D}$, in particular, $\bb\leq\kappa$. 
		\end{thm}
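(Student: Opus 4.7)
The plan is to take the Cohen reals $\langle\dot{c}_\alpha:\alpha<\lambda\rangle$ added at the first $\lambda$ stages as the Tukey witness. By Fact \ref{fac_Tukey_order_equivalence_condition}(\ref{item_small_equiv}) it is enough to show that $\p_\gamma$ forces that every $g\in\oo$ is $\leq^*$-above fewer than $\kappa$ of these Cohen reals; the corollary preceding Fact \ref{fac_Tukey_order_equivalence_condition} (yielding $\bb(\R)\leq\theta$ from $C_{[A]^{<\theta}}\lq\R$) then delivers $\bb\leq\kappa$.

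Suppose toward contradiction that some $p\in\p_\gamma$ and some name $\dot{g}$ satisfy $p\forces|\{\alpha<\lambda:\dot{c}_\alpha\leq^*\dot{g}\}|\geq\kappa$. By density, for each $\beta<\kappa$ I would extract a triple $(p_\beta,\alpha_\beta,k_\beta)$ with $p_\beta\leq p$, the $\alpha_\beta<\lambda$ pairwise distinct, and $p_\beta\forces\forall n\geq k_\beta\;\dot{c}_{\alpha_\beta}(n)\leq\dot{g}(n)$. Then I would carry out the standard block of thinnings: using $|H|<\kappa$ fix a common guardrail $h$; using $\omega<\kappa$ fix a common $k$; apply the $\Delta$-system lemma to the supports $\dom(p_\beta)$ to fix a common root $R$, arranging also $\alpha_\beta\notin R$; strengthen at coordinate $\alpha_\beta$ and pigeonhole over the countable set of Cohen conditions to fix a common $q$ with $|q|\geq k$ and $p_\beta(\alpha_\beta)\equiv q$; and finally, using the uf-lim-linked covering of each iterand along $h$, uniformize the ``type'' of $p_\beta$ on the remaining petal coordinates $\dom(p_\beta)\setminus(R\cup\{\alpha_\beta\})$.

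Next, extract a countable strictly increasing subsequence $\langle\alpha_{\beta_m}:m<\omega\rangle$ and apply \eqref{eq_increasing} to $\bar{q}=\langle\dot{q}_m\equiv q:m<\omega\rangle$ at $\xi_m=\alpha_{\beta_m}$: this forces $M:=\{m<\omega:\dot{q}_m\in\dot{G}_\gamma\}\in\dot{D}^h_\gamma$, so $M$ is infinite. To close the argument, extend $p$ to some $\tilde{p}$ deciding $\dot{g}(n^*)=v$ for an $n^*\geq\max(k,|q|)$, choose $m\in\omega$ with $\alpha_{\beta_m}$ outside the finite support of $\tilde{p}$ so that $\tilde{p}$ remains compatible with $p_{\beta_m}$ (made possible by the uniform petal types), and further extend the Cohen coordinate at $\alpha_{\beta_m}$ to a $q'\leq q$ with $q'(n^*)>v$. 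The resulting condition $\hat{p}$ forces both $\dot{c}_{\alpha_{\beta_m}}(n^*)>v$ (through $q'$) and $\dot{c}_{\alpha_{\beta_m}}(n^*)\leq\dot{g}(n^*)=v$ (through $p_{\beta_m}$ and $\tilde{p}$), a contradiction.

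The main obstacle is the petal-type uniformization step: the petal coordinates of the $p_\beta$'s can lie anywhere in $[\lambda,\gamma)$, where the iterands need not be countable, so one cannot directly pigeonhole the names at those coordinates. The uf-lim-linked covering along $h$ is precisely what lets one replace the $p_\beta(\xi)$'s by linked representatives and glue them using the limit operators, producing a single $\tilde{p}$-compatible extension that absorbs all the required information; equation \eqref{eq_constant} at common petal stages is used here in concert with \eqref{eq_increasing} at the Cohen stages. This bookkeeping is carried out in detail in \cite[Theorem 3.20]{Yam24} and \cite[Lemma 1.31]{GKS}, to which the theorem refers.
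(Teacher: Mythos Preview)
The paper gives no proof of this theorem, only citations; so the comparison is against the argument in \cite{GKS,Yam24}, whose UFI analogue is spelled out in this paper as Definition~\ref{dfn_UFI-limit_condition_for_iteration}, Lemma~\ref{lem_UFI_principle} and Theorem~\ref{thm_UFI_keeps_nonE_small}. Your overall architecture (Cohen reals as Tukey witnesses, contradiction, thinning to a uniform $\Delta$-system along a single guardrail $h\in H$) is the correct one and matches those references.

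The gap is in the middle step. Applying \eqref{eq_increasing} to $\bar q=\langle q,q,\ldots\rangle$ at the Cohen stages $\alpha_{\beta_m}$ only tells you that the \emph{single Cohen condition} $q$ enters the generic at each such stage; it says nothing about the full conditions $p_{\beta_m}$, which is what you need in order to use ``$p_{\beta_m}\Vdash\dot c_{\alpha_{\beta_m}}\leq^*\dot g$'' later. Correspondingly, the compatibility claim ``$\tilde p$ remains compatible with $p_{\beta_m}$ (made possible by the uniform petal types)'' is unjustified: sharing a linked component at petal coordinates does not make $p_{\beta_m}$ compatible with an arbitrary extension $\tilde p\leq p$ that happens to decide $\dot g(n^*)$; the support of $\tilde p$ may well meet the petals of $p_{\beta_m}$ in $[\lambda,\gamma)$, and linkedness gives no compatibility there.

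What the references actually do (and what this paper does in the UFI case) is take the \emph{limit of the full conditions}: first modify each $p_{\beta_m}$ at its Cohen coordinate $\alpha_{\beta_m}$ to $q^\frown\langle m\rangle$ (still a uniform $\Delta$-system for a new guardrail, by completeness of $H$), then define $p^\infty=\lim^h\langle p_{\beta_m}:m<\omega\rangle$ on the root $\nabla$ via \eqref{eq_constant}, and prove by induction on the iteration length (using \eqref{eq_constant} at root coordinates and \eqref{eq_increasing} at petal coordinates) that $p^\infty\Vdash\{m:p_{\beta_m}\in\dot G_\gamma\}\in\dot D^h_\gamma$. This is exactly the UF analogue of Lemma~\ref{lem_UFI_principle}. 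From there the contradiction is immediate: in any generic extension through $p^\infty$ the set of such $m$ is infinite, so $\dot c_{\alpha_{\beta_m}}(|q|)=m$ is unbounded while simultaneously $\dot c_{\alpha_{\beta_m}}(|q|)\leq\dot g(|q|)$. Your final paragraph correctly names the obstacle, but the workaround you sketch in the preceding paragraph does not resolve it; the limit-of-full-conditions lemma is the missing ingredient.
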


		The author showed in \cite{Yam25} that cUF-limits keep $\eeb$ small.
		\begin{thm}(\cite[Main Lemma 3.26]{Yam25})
			\label{thm_cUF_keep_estar_small}
			Suppose Assumption \ref{ass_first_Cohen} and consider the case $\Gamma_\mathrm{uf}=\Lambda^\mathrm{lim}_\mathrm{cuf}$.
			%
			Then, $\p_\gamma$ forces $C_{[\lambda]^{<\kappa}}\lq \mathbf{BPR}$, in particular, $\ee\leq\eeb\leq\kappa$.
		\end{thm}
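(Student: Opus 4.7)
The plan is to establish the Tukey reduction $C_{[\lambda]^{<\kappa}}\lq\mathbf{BPR}$ in the extension by $\p_\gamma$; the stated conclusion $\ee\leq\eeb\leq\kappa$ then follows from the Corollary to Fact~\ref{Tukey order and b and d} (applied to $\R=\mathbf{BPR}$) together with the obvious inequality $\ee\leq\eeb$. By Fact~\ref{fac_Tukey_order_equivalence_condition}(\ref{item_small_equiv}) it suffices to exhibit, in $V^{\p_\gamma}$, a family of challenges $\langle\dot c_\alpha:\alpha<\lambda\rangle\subseteq\oo$ such that every predictor bounding-predicts only $<\kappa$ many of them. The natural candidate is the Cohen real added at stage $\alpha$, interpreted as a function $\omega\to\omega$; this mirrors the strategy behind Theorem~\ref{thm_uf_limit_keeps_b_small}, where the analogous sequence witnesses $C_{[\lambda]^{<\kappa}}\lq\mathbf{D}$.

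Suppose by contradiction that $p\in\p_\gamma$, $A\in[\lambda]^\kappa$, and a $\p_\gamma$-name $\dot\pi=(\dot D,\langle\dot\pi_n:n\in\dot D\rangle)$ satisfy $p\Vdash\dot\pi$ bounding-predicts $\dot c_\alpha$ for every $\alpha\in A$. Since $|H|<\kappa$ and $H$ is complete (Lemma~\ref{lem_complete}), shrink $A$ to $A'\in[A]^\kappa$ and strengthen $p$ to some $p'\in\p^h_\gamma$ for a single $h\in H$, arranging that the countable approximations of $\dot\pi$ relevant to each $\alpha\in A'$ are tracked along $h$. By the Cohen clause of Assumption~\ref{ass_first_Cohen} together with Example~\ref{exa_size_linked}, each $\dot Q_{\alpha,h(\alpha)}$ is c-uf-lim-linked, so closed-UF-limit operations are available at every $\alpha\in A'$.

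Now carry out an inductive construction of a condition $q^*\leq p'$ in $\p^h_\gamma$ that forces $\dot c_\alpha$ to escape $\dot\pi$ on infinitely many coordinates. For given $\alpha\in A'$ and a partial prefix $s=\dot c_\alpha\on n$ that the running condition already decides, the value $\dot\pi_n(s)$ is only a $\p_\gamma$-name for a natural number; extending conditions in $\dot Q_{\alpha,h(\alpha)}$ realise every value $k<\omega$ for $\dot c_\alpha(n)$, so one may choose a sequence $\langle q^{\alpha,s}_k:k<\omega\rangle$ with $q^{\alpha,s}_k$ forcing $\dot c_\alpha(n)=k$ and apply $\lim^{(\dot D^h_\alpha)^-}$ (well-defined by Definition~\ref{dfn_UF_iteration}). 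The output condition lies \emph{inside} $\dot Q_{\alpha,h(\alpha)}$ by closedness, and hence can be iterated: one extends to longer prefixes, and combines across distinct $\alpha$'s via the strictly-increasing clause~\eqref{eq_increasing} of Definition~\ref{dfn_UF_iteration}. The resulting $q^*$ forces $\{n\in\dot D:\dot c_\alpha(n)>\dot\pi_n(\dot c_\alpha\on n)\}\in\dot D^h_\gamma$ for each $\alpha\in A'$, contradicting the bounding-prediction hypothesis.

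The main obstacle is that the bounds $\dot\pi_n(s)$ are $\p_\gamma$-names rather than ground-model integers, so ``escaping by exceeding the prediction'' must be organised through the ultrafilter-limit rather than by pointwise extension. This forces the intermediate limit conditions to remain inside the c-uf-lim-linked set $\dot Q_{\alpha,h(\alpha)}$ in order to feed into the next step of the induction. This is precisely what distinguishes the closed variant from the standard uf-limit and explains why the argument of Theorem~\ref{thm_uf_limit_keeps_b_small} generalises from $\bb$ to $\eeb$ only under the stronger closedness hypothesis.
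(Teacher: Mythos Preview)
The paper does not give its own proof of this theorem; it simply cites \cite[Main Lemma~3.24]{Yam24}. So there is no in-paper argument to compare against, and the question becomes whether your sketch stands on its own.

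It does not, because you have misplaced where the closed limits act. You write that ``extending conditions in $\dot Q_{\alpha,h(\alpha)}$ realise every value $k<\omega$ for $\dot c_\alpha(n)$'' and then ``apply $\lim^{(\dot D^h_\alpha)^-}$'', with the output lying in $\dot Q_{\alpha,h(\alpha)}$ by closedness. But for $\alpha<\lambda$ the stage is \emph{trivial}: $\dot Q_{\alpha,h(\alpha)}$ is a singleton, and the limit map there is just the constant function (Definition~\ref{dfn_UF_iteration}(5)(a), parenthetical remark). Different Cohen extensions forcing $\dot c_\alpha(n)=k$ for different $k$ live in \emph{different} singleton components, so there is no common $\dot Q_{\alpha,h(\alpha)}$ to which a nontrivial $\lim^{(\dot D^h_\alpha)^-}$ could be applied, and nothing for closedness to say at that coordinate.

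Where closedness is genuinely used in the actual argument is at the \emph{non-trivial} coordinates in the root $\nabla$ of the $\Delta$-system (the stages $\xi\geq\lambda$). One forms a uniform $\Delta$-system $\langle p_\alpha\rangle$ following some $h$, modifies each $p_\alpha$ only at its Cohen coordinate $\alpha\notin\nabla$ (which merely changes the guardrail on a set disjoint from the others, so completeness of $H$ gives a new $h'$), and then takes $\lim^{h'}$. Closedness guarantees that at each $\xi\in\nabla$ the limit value remains in $\dot Q_{\xi,h(\xi)}$, so the limit condition again follows a guardrail and can be fed into a \emph{further} limit. This iterated limit is what lets one handle the fact that both the domain $\dot D$ of the predictor and the values $\dot\pi_n(s)$ are names: one first passes to a limit deciding a suitable $n\in\dot D$ beyond the common Cohen stem, and then---because the limit is still $h$-following---one can run a second round with Cohen extensions at position $n$ large enough to beat $\dot\pi_n$. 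Your sketch gestures at ``iterating'' but the mechanism you describe (iterating at the Cohen coordinate) is not the one that works; the iteration happens at the level of full conditions in $\p^h_\gamma$, with closedness acting only on the root coordinates to keep the process inside the guardrail.
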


		The author introduced in \cite{Yam25} a forcing-free characterization of ``$Q\subseteq\p$ is (c-)uf-lim-linked'':
		
		\begin{lem}(\cite[Lemma 3.28]{Yam25})
			\label{lem_chara_UF_linked}
			Let $D$ be an ultrafilter, $\p$ a poset, $Q\subseteq\p$, $\lim^D\colon Q^\omega\to\p$.
			Then, the following are equivalent:
			\begin{enumerate}
				\item $\lim^D$ witnesses $Q$ is $D$-lim-linked.\label{item_lim_1}
				\item $\lim^D$ satisfies $(\star)_n$ below for all $n<\omega$:\label{item_lim_2}
				\begin{align*}
					(\star)_n:& \text{``Given }\bar{q}^j=\langle q_m^j:m<\omega\rangle\in Q^\omega\text{ for }j<n\text{ and }r\leq\textstyle{\lim^D}\bar{q}^j\text{ for all }j<n,\\
					&\text{then }\{m<\omega:r \text{ and all }q_m^j \text{ for } j<n \text{ have a common extension}\}\in D\text{''}.
				\end{align*}
			\end{enumerate}
		\end{lem}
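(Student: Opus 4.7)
The plan is to prove the two directions separately, with the meat of the argument being a density/genericity manipulation that translates the semantic fact ``$\lim^D\bar{q}$ forces $\{m : q_m \in \dot{G}\}$ into an ultrafilter extending $D$'' into the combinatorial condition $(\star)_n$, and vice versa.

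For \eqref{item_lim_1}$\Rightarrow$\eqref{item_lim_2}, I would proceed contrapositively. Fix sequences $\bar{q}^j$ for $j<n$ and $r\leq\lim^D\bar{q}^j$, and suppose $A:=\{m<\omega : r\text{ and all }q_m^j\text{ have a common extension}\}\notin D$, so $\omega\setminus A\in D$. Since $\dot{D}'$ extends $D$, the condition $r$ forces $\omega\setminus A\in\dot{D}'$; and by the defining property of $\lim^D$, the condition $r$ also forces $\{m : q_m^j\in\dot{G}\}\in\dot{D}'$ for each $j<n$. Because $\dot{D}'$ is forced to be a filter, $r$ forces these $n+1$ sets to have common (in fact, very large) intersection, hence to have at least one element $m$. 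But any such $m$ would lie in $\omega\setminus A$ while simultaneously witnessing that $q_m^0,\ldots,q_m^{n-1}\in\dot{G}$, so $r, q_m^0,\ldots,q_m^{n-1}$ are pairwise in $\dot{G}$ and thus compatible, contradicting $m\notin A$.

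For \eqref{item_lim_2}$\Rightarrow$\eqref{item_lim_1}, the strategy is to build $\dot{D}'$ as a name for any ultrafilter in $V[\dot{G}]$ extending the filter generated (in $V[\dot{G}]$) by $D$ together with all sets of the form $\{m : q_m\in\dot{G}\}$ for which $\lim^D\bar{q}\in\dot{G}$. The only nontrivial point is to verify that this collection really generates a proper filter in the extension; once that is done, choosing a $\dot{D}'$ extending it (e.g., via a fixed well-ordering of names) is routine and the required forcing statement holds by construction. Properness reduces, after intersecting, to showing that for any $A\in D$, any $\bar{q}^0,\ldots,\bar{q}^{n-1}\in Q^\omega$, and any $r\in\dot{G}$ with $r\leq\lim^D\bar{q}^j$ for all $j<n$, there is some $m\in A$ with all $q_m^j\in\dot{G}$.

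This last claim I would prove by a density argument below $r$. Given any $r''\leq r$, apply $(\star)_n$ to obtain $B_{r''}:=\{m : r''\text{ and all }q_m^j\text{ have a common extension}\}\in D$, hence $A\cap B_{r''}\neq\emptyset$; picking $m$ in this intersection produces an $r'\leq r''$ below $r''$ and all $q_m^j$, which witnesses the density. Genericity then yields an appropriate $m\in A$ in $\dot{G}$. The main obstacle, and the only place where any subtlety arises, is organizing this density argument so that $(\star)_n$ is invoked for the correct $n$ with the correct conditions; once that bookkeeping is in place, both implications close cleanly.
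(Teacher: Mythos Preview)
The paper does not give its own proof of this lemma; it is simply quoted from \cite[Lem 3.28]{Yam24}. Your argument is correct and is the natural one: the forward direction uses that a generic filter is directed to turn ``all $q_m^j$ lie in $\dot G$'' into ``they have a common extension with $r$'', and the backward direction builds $\dot D'$ from the finite intersection property established via the density argument powered by $(\star)_n$.

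Two minor points of phrasing. In \eqref{item_lim_1}$\Rightarrow$\eqref{item_lim_2} you write that $r,q_m^0,\ldots,q_m^{n-1}$ are ``pairwise in $\dot G$ and thus compatible''; what you actually use is that they are \emph{all} in the (directed) generic filter, hence admit a common lower bound there --- pairwise compatibility alone would not suffice. In \eqref{item_lim_2}$\Rightarrow$\eqref{item_lim_1}, it is worth remarking explicitly that the resulting $\dot D'$ is non-principal: this is automatic because $D$ is non-principal and hence contains every cofinite set, so the generated filter cannot contain a finite set. With these cosmetic adjustments your proof is complete.
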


		\begin{dfn} (\cite{Bre95},\cite{BS_E_and_P_2})
			\label{dfn_PR}
			Fix $g\in\left(\omega+1\setminus2\right)^\omega$.
			The $g$-prediction forcing $\prp_g$ consists of tuples $(d,\pi,F)$ satisfying:
			
			\begin{enumerate}
				\item $d\in\sq$.
				\item $\pi=\langle\pi_n:n\in d^{-1}(\{1\})\rangle$.
				\item for each $n\in d^{-1}(\{1\})$, $\pi_n$ is a finite partial function of $\prod_{k<n}g(k)\to g(n)$.
				\item $F\in[\prod_{n<\omega}g(n)]^{<\omega}$
				\item for each $f,f^\prime\in F, f\on|d|=f^\prime\on|d|$ implies $f=f^\prime$.
			\end{enumerate}
			$(d^\prime,\pi^\prime,F^\prime)\leq(d,\pi,F)$ if:
			\begin{enumerate}[(i)]

				\item $d^\prime\supseteq d$.
				\item $\forall n\in d^{-1}(\{1\}), \pi_n^\prime\supseteq\pi_n$.
				\item $F^\prime\supseteq F$.
				\item \label{item_PR_order_long}
				For all $n\in(d^\prime)^{-1}(\{1\})\setminus d^{-1}(\{1\})$ and $f\in F$, we have $f\on n\in\dom(\pi^\prime_n)$ and $\pi^\prime_n(f\on n)=f(n)$.
				
			\end{enumerate}
			When $g(n)=\omega$ for all $n<\omega$, we write $\prp$ instead of $\prp_g$ and just call it ``prediction forcing''.
		\end{dfn}
		
		\begin{thm}(\cite[Corollary 3.34]{Yam25}, \cite{BS_E_and_P_2})
			\label{thm_PR_is_sigma_centered_uf}
			For any $g\in\left(\omega+1\setminus2\right)^\omega$, $\prp_g$ is $\sigma$-$\left( \Lambda(\text{centered})\cap\Lambda^\mathrm{lim}_\mathrm{uf}\right) $-linked.
			Moreover, 
			if $g\in(\omega\setminus2)^\omega$, then 
			$\prp_g$ is $\sigma$-$\left( \Lambda(\text{centered})\cap\Lambda^\mathrm{lim}_\mathrm{cuf}\right)$-linked.
		\end{thm}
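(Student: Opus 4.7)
The plan is to exhibit a single countable partition of $\prp_g$ witnessing both properties simultaneously. For $\sigma$-centeredness I will use the pieces $Q_{(d,\pi)}$ sharing the same trunk $d$ and predictor data $\pi$; these are countably many since $d\in 2^{<\omega}$ and each $\pi_n$ is a finite partial function between the countable sets $\prod_{k<n}g(k)$ and $g(n)$. Finitely many conditions $(d,\pi,F_1),\dots,(d,\pi,F_k)$ inside one such piece admit the common extension $(d',\pi,\bigcup_i F_i)$, where $d'$ extends $d$ with zeros to a length sufficient to separate the finitely many distinct elements of $\bigcup_i F_i$; clause (iv) of Definition \ref{dfn_PR} is vacuous because $d'$ introduces no new $1$-positions.

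For the limit property I will refine the partition to pieces $Q_{(d,\pi,s)}$, where $s\subseteq\prod_{k<|d|}g(k)$ is a finite set and the piece collects those conditions with $F\upharpoonright|d|=s$; countability is preserved. Writing $\bar{q}=\langle(d,\pi,\{f^m_\sigma:\sigma\in s\}):m<\omega\rangle$ in such a piece and fixing an ultrafilter $D$, I will define $f^*_\sigma\in\prod_k g(k)$ pointwise: $f^*_\sigma(k)$ is the unique $i\in g(k)$ with $\{m:f^m_\sigma(k)=i\}\in D$ when such $i$ exists, and $0$ otherwise. When $g\in(\omega\setminus 2)^\omega$, every $g(k)$ is finite and the ultrafilter pigeonhole guarantees such $i$ at every $k$; the natural definition $\textstyle\lim^D\bar{q}:=(d,\pi,\{f^*_\sigma:\sigma\in s\})$ lands in $Q_{(d,\pi,s)}$ and supplies the closed witness. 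For general $g$, letting $n^*_\sigma$ denote the least $k\geq|d|$ with no $D$-limit (or $\infty$), I will pad $d$ by zeros to $d^*$ of length $\max(|d|,\max_\sigma n^*_\sigma+1)$ and set $\textstyle\lim^D\bar{q}:=(d^*,\pi,\{f^*_\sigma:\sigma\in s\})\in\prp_g$; the inclusion $d^r\supseteq d^*$ then ensures that every new $1$-position of $d^r$ is strictly past every finite $n^*_{\sigma,j}$.

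To verify $\lim^D$ witnesses $D$-lim-linkedness I will check $(\star)_n$ of Lemma \ref{lem_chara_UF_linked} for every $n$: given $\bar{q}^j$ ($j<n$) in the piece and $r=(d^r,\pi^r,F^r)\leq\lim^D\bar{q}^j$ for all $j$, I split each new $1$-position $k$ of $d^r$ by whether $n^*_{\sigma,j}$ is infinite (then $D$-many $m$ satisfy $f^{j,m}_\sigma\upharpoonright(k+1)=f^{*,j}_\sigma\upharpoonright(k+1)$, so condition (iv) for $r$ already delivers $\pi^r_k(f^{j,m}_\sigma\upharpoonright k)=f^{j,m}_\sigma(k)$) or finite (then $k>n^*_{\sigma,j}$, $f^{j,m}_\sigma(n^*_{\sigma,j})$ escapes every finite set along $D$, so $D$-many $m$ have $f^{j,m}_\sigma\upharpoonright k\notin\dom(\pi^r_k)$ and one freely adjoins the required pair to $\pi^{r''}_k$). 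Intersecting over the finitely many triples $(j,k,\sigma)$ yields a $D$-large set of witnesses $m$. The main obstacle I expect to need the most care is the \emph{consistency} of the simultaneous adjunctions to each $\pi^{r''}_k$: a $D$-large collision $f^{j,m}_\sigma\upharpoonright k=f^{j',m}_{\sigma'}\upharpoonright k$ with $f^{j,m}_\sigma(k)\neq f^{j',m}_{\sigma'}(k)$ would, via the ultrafilter transferring pointwise equality, produce the same collision among the corresponding $f^*$'s and render $r$'s own clause (iv) inconsistent already at $k$, contradicting $d^r(k)=1$; the extra padding of $d^*$ past the first failure coordinate rules out the degenerate case where both pointwise $D$-limits at $k$ are absent, by forcing the two restrictions to split at a coordinate $\leq n^*_\sigma$. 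Once $(\star)_n$ holds for every $D$, Lemma \ref{lem_chara_UF_linked} yields uf-lim-linkedness, and the stronger inclusion $\lim^D\bar{q}\in Q_{(d,\pi,s)}$ for $g\in(\omega\setminus 2)^\omega$ completes the closed version.
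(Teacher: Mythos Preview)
The paper does not prove this statement; it cites \cite[Corollary~3.34]{Yam24} and \cite{BS_E_and_P_2}. Your argument for the bounded case $g\in(\omega\setminus 2)^\omega$ is essentially correct: since every $g(k)$ is finite, the ultrafilter pigeonhole produces a genuine pointwise $D$-limit $f^*_\sigma$ for each $\sigma\in s$, the resulting limit condition lands back in $Q_{(d,\pi,s)}$, and the verification of $(\star)_n$ reduces---for $D$-many $m$---to clause~(iv) of the relation $r\le\lim^D\bar q^{\,j}$ applied with $f^{j,m}_\sigma\!\upharpoonright\!|d^r|=f^{*,j}_\sigma\!\upharpoonright\!|d^r|$ on the finitely many relevant coordinates.

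The unbounded case, however, has a genuine gap. Your treatment of ``Case~2 conflicts'' is incorrect: the assertion that the padding ``forces the two restrictions to split at a coordinate $\le n^*_\sigma$'' is simply false. Concretely, take $g\equiv\omega$, $d=\pi=\emptyset$, $s=\{\emptyset\}$, and for $j\in\{0,1\}$ put
\[
f^{j,m}(0)=m,\qquad f^{0,m}(1)=2m,\quad f^{1,m}(1)=2m+1,\qquad f^{j,m}(l)=0\ (l\ge 2).
\]
For both $j$ the first failure is $n^*=0$, so your $d^*=\langle 0\rangle$, and both $f^{*,j}$ are the constant-$0$ sequence; hence $\lim^D\bar q^{\,0}=\lim^D\bar q^{\,1}=(\langle 0\rangle,\emptyset,\{0^\omega\})$. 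Now $r=(\langle 0,1\rangle,\{\pi^r_1\},\{0^\omega\})$ with $\pi^r_1(\langle 0\rangle)=0$ is a legitimate common extension of both limits. But any common extension of $r$, $q^0_m$, $q^1_m$ must satisfy clause~(iv) at the new $1$-position $k=1$ for both $f^{0,m}$ and $f^{1,m}$, forcing $\pi''_1(\langle m\rangle)=2m$ and $\pi''_1(\langle m\rangle)=2m+1$ simultaneously. Thus $\{m:r,q^0_m,q^1_m\text{ have a common extension}\}=\emptyset\notin D$, and your $\lim^D$ violates $(\star)_2$. A correct argument must define the limit so that sequences conflicting in this way receive \emph{incompatible} limit conditions (or must use a finer partition of $\prp_g$); the pointwise-$D$-limit-with-default-$0$ construction cannot do this, since it assigns identical limits to $\bar q^{\,0}$ and $\bar q^{\,1}$.
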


		\section{UFI-limits}\label{sec_UFI}
		The following first three subsections are based on \cite{CMU24}.

		\subsection{General theory of UFI-limits}
		
		In this subsection, we present the theory of \textit{ultrafilter-limits for intervals} (UFI-limit), which was first proposed in \cite[Section 4]{Mej24} and later formulated in a more general framework in \cite{CMU24}. While UFI-limit is interpreted as a particular case of their framework (see \cite[Remark 5.43, 6.17, 7.13]{CMU24}), in this paper we give another simpler formulation sufficient for proving the main result. First, unlike the original framework, throughout this paper we fix the interval partition $\bari$ of $\omega$ such that $|I_k|=2^k$ for all $k<\omega$ for simplicity. 
		
		\begin{dfn}(\cite[Definition 4.1]{Mej24})
			\label{dfn_ufi_linked}
			Let $\p$ be a poset.
			\begin{enumerate}
				\item Let $\bar p = \langle p_l\rangle_{l<\omega} \in \p^\omega$ and $k<\omega$. 
				\begin{enumerate}
					\item Define  $\displaystyle\forces_\p\inG_k(\bar{p})\coloneq\{l \in I_k:p_l \in \dot G\}$.
					\item Define $\displaystyle\forces_\p\dns_k(\bar p)\coloneq \frac{|\inG_k(\bar{p})|}{|I_k|}$, called the density of $\bar{p}$ in $I_k$.
				\end{enumerate}

				\item Let 
				$\varepsilon\in(0,1)_\mathbb{Q}\coloneq\{r:r \text{ is a rational number with } 0<r<1\}$ and $D$ be an ultrafilter.
				A set $Q\subseteq\p$ is $(D,\varepsilon)$-lim-linked if there are a $\p$-name $\dot{D}^\prime$ of an ultrafilter extending $D$ and a function $\lim^{D,\varepsilon}\colon Q^\omega\to \p$ such that for any $\bar q = \langle q_l\rangle_{l<\omega} \in Q^\omega$,
				\begin{equation}
					\textstyle{\lim^{D,\varepsilon}}\displaystyle \bar q \Vdash \left\{k<\omega:\dns_k(\bar q)\geq1-\varepsilon\right\} \in \dot {D}^\prime.
				\end{equation}
				When fixing $\varepsilon$, we just write $\lim^D$ instead of $\lim^{D,\varepsilon}$.
				\item Let $\mu$ be an infinite cardinal.
				The poset $\p$ is $\mu$-$\ufi$-linked (short for ultrafilter-limit for intervals) witnessed by $\langle Q_{\alpha,\varepsilon}:\alpha<\mu,\ \varepsilon\in(0,1)_\q\rangle$, if:
				\begin{enumerate}
					\item Each $Q_{\alpha,\varepsilon}$ is $(D,\varepsilon)$-lim-linked for any ultrafilter $D$.
					\item For any $\varepsilon\in(0,1)_\q$, $\bigcup_{\alpha<\mu} Q_{\alpha,\varepsilon}$ is dense in $\p$.
				\end{enumerate}
				Moreover, if each $Q_{\alpha,\varepsilon}$ is centered, we say $\p$ is $\mu$-$\ufic$-lim-linked.

			\end{enumerate}
			When $\mu=\aleph_0$, we use $\mu=\sigma$ instead and often say ``$\p$ has $\ufi$-limits ($\ufic$-limits)'' instead of ``$\p$ is $\sigma$-$\ufi$-lim-linked ($\sigma$-$\ufic$-lim-linked)''.
		\end{dfn}
		
		\begin{rem}
			We introduce the notation $\ufi$ and $\ufic$ to clarify where we use centeredness. For example, on the one hand, we use centeredness in the iteration construction in Lemma \ref{lem_ufi_const_lim}, so we state the lemma using $\ufic$ instead of $\ufi$. On the other hand, we do not require centeredness in the argument itself of controlling $\none$ in Theorem \ref{thm_UFI_keeps_nonE_small}, so we do not use $\ufic$ there. As for centeredness, see also Remark \ref{rem_centered_simplify}, \ref{rem_key_centered}.
			

			

		\end{rem}

		\begin{lem}(\cite[Example 4.3(1)]{Mej24})
			\label{lem_size_UFIC_linked}
			Every singleton $\{q\}\subseteq \p$ is centered and $(D,\varepsilon)$-lim-linked for any ultrafilter $D$ and $\varepsilon\in(0,1)_\mathbb{Q}$. Thus, every poset $\p$ is $|\p|$-$\ufic$-lim-linked and
			in particular, Cohen forcing $\mathbb{C}$ is $\sigma$-$\ufic$-lim-linked.
		\end{lem}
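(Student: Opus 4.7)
The plan is to take the obvious candidate for the limit function and verify the density inequality almost for free. For a singleton $Q=\{q\}$, the only sequence in $Q^\omega$ is the constant one $\bar q=\langle q\rangle_{l<\omega}$, so I would set $\lim^{D,\varepsilon}\bar q := q$. The key observation is that if $q\in\dot G$, then for every $l<\omega$ we have $q_l=q\in\dot G$, hence $\inG_k(\bar q)=I_k$ and $\dns_k(\bar q)=1$ for every $k<\omega$. Therefore
\begin{equation*}
q\Vdash \{k<\omega:\dns_k(\bar q)\geq 1-\varepsilon\}=\omega,
\end{equation*}
and $\omega$ belongs to any (non-principal) ultrafilter, so the membership in $\dot D^\prime$ holds trivially for any $\p$-name $\dot D^\prime$ of an ultrafilter extending $D$ (for instance the canonical name induced by $D$ itself, which survives in the extension by ccc-ness or can be extended by Zorn's lemma in $V^\p$). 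Centeredness of a singleton is immediate since $q$ is a common extension of itself.

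For the second assertion, I would witness the $|\p|$-$\ufic$-linkedness by the family $\langle Q_{q,\varepsilon}:q\in\p,\ \varepsilon\in(0,1)_\q\rangle$ defined by $Q_{q,\varepsilon}:=\{q\}$. Each such set is centered and $(D,\varepsilon)$-lim-linked by the first part, and for every $\varepsilon$ we have $\bigcup_{q\in\p}Q_{q,\varepsilon}=\p$, which is of course dense in $\p$. Finally, Cohen forcing $\mathbb{C}$ (viewed as $2^{<\omega}$ or $\omega^{<\omega}$) has cardinality $\aleph_0$, so specializing the previous paragraph to $\mu=\aleph_0$ gives that $\mathbb{C}$ is $\sigma$-$\ufic$-lim-linked.

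There is essentially no obstacle here; the only point that requires a moment of care is the construction of the $\p$-name $\dot D^\prime$, and this is handled uniformly by extending $D$ in the ground model or letting the ultrafilter be forced by a generic extension argument in $V^\p$. Everything else reduces to the trivial identity $\dns_k(\langle q,q,\ldots\rangle)=1$ below any condition refining $q$.
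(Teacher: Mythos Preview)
Your proof is correct. The paper does not supply a proof of this lemma at all (it is stated with a citation and left to the reader), so your argument is exactly the intended verification: for the constant sequence the density is identically $1$ below $q$, whence the set in question is $\omega$ and lies in any name $\dot D'$ of an ultrafilter extending $D$, and the cover by singletons yields the $|\p|$-$\ufic$ conclusion.
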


		We will define an iteration for $\ufi$-limits like Definition \ref{dfn_Gamma_iteration}, taking special care of the new parameter $\varepsilon\in(0,1)_\mathbb{Q}$.
		Let $\Gamma_{\mathrm{ufi}}$ represent $\ufi$ or $\ufic$.
		\begin{dfn}
			\label{dfn_UFI_iteration_settings}
			\begin{enumerate}
				\item A $\kappa$-$\Gamma_{\mathrm{ufi}}$-iteration is a fsi $\p_\gamma=\langle(\p_\eta,\qd_\xi):\eta\leq\gamma,\xi<\gamma\rangle $ of ccc forcings with witnesses $\langle\p^-_\xi:\xi<\gamma\rangle$, $\langle\theta_\xi:\xi<\gamma\rangle$ and $\bar{Q}=\langle\dot{Q}^\xi_{\zeta,\varepsilon} : \xi<\gamma,\zeta<\theta_\xi, \varepsilon\in(0,1)_\mathbb{Q}\rangle$ satisfying for all $\xi<\gamma$:
				\begin{enumerate}
					\item $\p^-_\xi\lessdot\p_\xi$.
					\item $\theta_\xi$ is a cardinal of size $<\kappa$.
					\item $\qd_\xi$ and $\langle\dot{Q}^\xi_{\zeta,\varepsilon} : \zeta<\theta_\xi, \varepsilon\in(0,1)_\mathbb{Q}\rangle$ are $\p^-_\xi$-names and $\p^-_\xi$ forces that $\qd_\xi $ is $\theta_\xi$-$\Gamma_{\mathrm{ufi}}$-linked witnessed by
					$\bar{Q}^\xi\coloneq\langle\dot{Q}^\xi_{\zeta,\varepsilon} : \zeta<\theta_\xi, \varepsilon\in(0,1)_\mathbb{Q}\rangle$.
				\end{enumerate}	
				\item $\xi<\gamma$ is a trivial stage if $\Vdash_{\p^-_\xi}|\dot{Q}^\xi_{\zeta,\varepsilon}|=1$ for all $\zeta<\theta_\xi$ and $\varepsilon\in(0,1)_\mathbb{Q}$. $S^-$ is the set of all trivial stages and $S^+\coloneq\gamma\setminus S^-$.
				\item A guardrail is a function $h\in \prod_{\xi<\gamma}(\theta_\xi\times(0,1)_\mathbb{Q})$.
				For a guardrail $h$, we write $h(\xi)=(h_L(\xi),h_R(\xi))$.\label{item_guardrail}
				\item $\p^h_\eta$ is the set of conditions $p\in\p_\eta$ following $h$, i.e., for $\xi\in\dom(p)$,
				$p(\xi)$ is a $\p^-_\xi$-name and $\Vdash_{\p^-_\xi} p(\xi)\in \dot{Q}^\xi_{h(\xi)}$.
				\item $\bar{q}=\langle \dot{q}_l:l<\omega\rangle$  sequentially follows $h$ with constant $\varepsilon\in(0,1)_\mathbb{Q}$ if there are $\langle\xi_l:l<\omega\rangle\in\gamma^\omega$ such that for all $l<\omega$, $h_R(\xi_l)=\varepsilon$ and 
				$\Vdash_{\p^-_{\xi_l}}\dot{q}_l\in \dot{Q}^{\xi_l}_{h(\xi_l)}$.

				\item A set of guardrails $H$ is complete if any countable partial function in $\prod_{\xi<\gamma}(\theta_\xi\times(0,1)_\mathbb{Q})$ is extended to some (total) function in $H$ .
				
			\end{enumerate}
		\end{dfn}
		
		Note that since $|(0,1)_\q|=\aleph_0$, Lemma \ref{lem_complete} also holds for guardrails of this $\Gamma_{\mathrm{ufi}}$-iteration.

		\begin{dfn}
			\label{dfn_UFI_iteration_with_UFs}
			A $\kappa$-$\Gamma_{\mathrm{ufi}}$-iteration has $\Gamma_{\mathrm{ufi}}$-limits on $H$ if
			\begin{enumerate}
				\item $H\subseteq\prod_{\xi<\gamma}(\theta_\xi\times(0,1)_\mathbb{Q})$ is a set of guardrails.
				\item For $h\in H$, $\langle \dot{D}^h_\xi:\xi\leq\gamma \rangle$ is a sequence such that $\dot{D}^h_\xi$ is a $\p_\xi$-name of a non-principal ultrafilter. 
				\item If $\xi<\eta\leq\gamma$, then $\Vdash_{\p_\eta}\dot{D}^h_\xi\subseteq\dot{D}^h_\eta$.
				
				\item For $ \xi\in S^+$, $\Vdash_{\p_\xi} (\dot{D}^h_\xi)^-\in V^{\p^-_\xi}$ where $(\dot{D}^h_\xi)^-\coloneq\dot{D}^h_\xi\cap V^{\p^-_\xi}$.
				\item Whenever $\bar{q}=\langle \dot{q}_l:l<\omega\rangle$ sequentially follows $h$ with constant $\varepsilon\in(0,1)_\mathbb{Q}$ with witnesses $\langle\xi_l:l<\omega\rangle\in\gamma^\omega$, 
				\begin{enumerate}
					\item If $\langle \xi_l:l<\omega \rangle$ is constant with value $\xi$, then
					\begin{equation}
						\label{eq_constant_ufi}
						\Vdash_{\p_\xi}\textstyle{\lim^{(\dot{D}^h_\xi)^-}}\displaystyle\bar{q}\Vdash_{\qd_\xi} \left\{k<\omega:\dns_k(\bar q)\geq 1-\varepsilon\right\} \in\dot{D}^h_{\xi+1}.
					\end{equation}
					(Abusing notation, for $\xi\in S^-$ we use $\lim^{(\dot{D}^h_\xi)^-}\bar{q}$ to denote the constant value of $\bar{q}$ as in the case of UF-limits.)
					\item If $\langle \xi_l:l<\omega \rangle$ is strictly increasing, then
					\begin{equation}
						\label{eq_increasing_ufi}
						\Vdash_{\p_\gamma}\left\{k<\omega:\dns_k(\bar q)\geq 1-\varepsilon\right\} \in\dot{D}^h_\gamma.
					\end{equation}
				\end{enumerate}
				
			\end{enumerate}
		\end{dfn}
		
		We show we can extend ultrafilters at successor steps by using the following fact:
		\begin{fac}(\cite[Lemma 3.20]{BCM21}, \cite[Lemma 3.10]{Yam25})
			\label{lem_UF_upward_directed}
			Let 
			$M\subseteq N$ be transitive models of set theory, $\p\in M$ be a poset, $D_0\in M,D_0^\prime\in N$ be ultrafilters and $\dot{D}_1\in M^\p$ be a name of an ultrafilter.
			If $D_0\subseteq D_0^\prime$ and $\Vdash_{M,\p}D_0\subseteq \dot{D}_1$,
			then there exists $\dot{D}_1^\prime\in N^\p$, a name of an ultrafilter, such that $\Vdash_{N,\p}D_0^\prime,\dot{D}_1\subseteq\dot{D}_1^\prime$.
		\end{fac}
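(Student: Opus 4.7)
The plan is to verify that the filter family $D_0'\cup\dot{D}_1[G]$ has the finite intersection property in $N[G]$ for any $\p$-generic $G$ over $N$; given this, Zorn's lemma in $N[G]$ together with the maximum principle for forcing will deliver a name $\dot{D}_1'\in N^\p$ for an ultrafilter extending $D_0'\cup\dot{D}_1$, as desired.

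Since both $D_0'$ and $\dot{D}_1[G]$ are filters, closing each under finite intersections reduces the FIP check to verifying, for every $Y\in D_0'$ and every $\dot{X}\in M^\p$ with $\Vdash_{M,\p}\dot{X}\in\dot{D}_1$, the statement $\Vdash_{N,\p}\dot{X}\cap\check{Y}\neq\emptyset$. Given an arbitrary $r\in\p$, we introduce the \emph{shadow set} $T_r=\{n<\omega:\exists q\leq r,\ q\Vdash n\in\dot{X}\}$, which is definable from parameters in $M$ and therefore lies in $M$.

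The crucial observation is that $T_r\in D_0$. Indeed, if not, then since $D_0$ is an ultrafilter in $M$ we would have $\omega\setminus T_r\in D_0$, so $\Vdash_{M,\p}\omega\setminus T_r\in\dot{D}_1$; but also $r\Vdash\dot{X}\subseteq T_r$ and hence $r\Vdash T_r\in\dot{D}_1$, forcing both $T_r$ and its complement into the filter $\dot{D}_1$, an absurdity. Hence $T_r\in D_0\subseteq D_0'$, and since $D_0'$ is a filter containing $Y$, the intersection $T_r\cap Y$ is non-empty. Choosing $n\in T_r\cap Y$, the definition of $T_r$ supplies $q\leq r$ forcing $n\in\dot{X}$; as $n\in Y$, this $q$ forces $\dot{X}\cap\check{Y}\neq\emptyset$. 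Since $r$ was arbitrary, the set of such $q$'s is dense in $\p$, yielding $\Vdash_{N,\p}\dot{X}\cap\check{Y}\neq\emptyset$.

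With FIP established in every $\p$-generic extension of $N$, the maximum principle inside $N$ produces a name $\dot{D}_1'\in N^\p$ forced to be an ultrafilter extending $D_0'\cup\dot{D}_1$. The main obstacle is conceptual rather than computational: while elements of $D_0'\setminus D_0$ need not belong to $M$, the shadow $T_r$ always does, and its $D_0$-largeness is precisely what lets $D_0$ serve as a common bridge certifying the compatibility of $D_0'$ and $\dot{D}_1[G]$ inside the larger model $N[G]$.
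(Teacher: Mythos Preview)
The paper does not supply a proof of this statement; it is recorded as a Fact with citations to \cite{BCM21} and \cite{Yam24}. Your argument is correct and is essentially the standard one: reduce to pairwise intersections, pass to the shadow set $T_r\in M$, and use that $D_0$ is an ultrafilter in $M$ to place $T_r$ in $D_0\subseteq D_0'$.

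One small point deserves a word of justification. Your reduction is phrased for names $\dot{X}\in M^\p$ satisfying the \emph{global} condition $\Vdash_{M,\p}\dot{X}\in\dot{D}_1$, whereas a priori an element of $\dot{D}_1[G]$ only yields some $\tau\in M$ and $r\in G$ with $r\Vdash_{M,\p}\tau\in\dot{D}_1$. This gap is harmless: mixing $\tau$ below $r$ with $\check{\omega}$ on conditions incompatible with $r$ produces $\dot{X}\in M^\p$ with $\Vdash_{M,\p}\dot{X}\in\dot{D}_1$ and $r\Vdash\dot{X}=\tau$, so your stated reduction goes through. Alternatively, note that your shadow-set argument already works verbatim under the weaker local hypothesis $r\Vdash_{M,\p}\dot{X}\in\dot{D}_1$ (the only place you use membership in $\dot{D}_1$ is the line ``$r\Vdash T_r\in\dot{D}_1$''), so you could simply restate the reduction with that hypothesis and skip the mixing.
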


		\begin{lem}(\cite[Theorem 7.15]{CMU24})
			\label{lem_ufi_const_succ}
			Let $\p_{\gamma+1}$ be a $\kappa$-$\Gamma_{\mathrm{ufi}}$-iteration (of length $\gamma+1$) and suppose $\p_\gamma=\p_{\gamma+1}\on\gamma$ has $\Gamma_{\mathrm{ufi}}$-limits on $H$.
			If:
			\begin{equation}
				\label{eq_minus}
				\Vdash_{\p_\gamma} (\dot{D}^h_\gamma)^-\in V^{\p^-_\gamma}\text{ for all }h\in H,
			\end{equation}
			then we can find $\{\dot{D}^h_{\gamma+1}:h\in H\}$ witnessing that $\p_{\gamma+1}$ has $\Gamma_{\mathrm{ufi}}$-limits on $H$.
		\end{lem}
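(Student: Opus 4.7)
My plan is to follow the two-case strategy of Lemma \ref{lem_uf_const_succ}, distinguishing whether the new stage $\gamma$ is trivial, and using Fact \ref{lem_UF_upward_directed} to lift an ultrafilter from $V^{\p^-_\gamma}$ into $V^{\p_\gamma}$ in the non-trivial case.

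If $\gamma\in S^-$, every piece $\dot Q^\gamma_{\zeta,\varepsilon}$ is a singleton, so any $\bar q$ sequentially following $h$ with constant index value $\gamma$ is itself constant; then $\lim^{(\dot D^h_\gamma)^-}\bar q$ is that constant, forcing $\dns_k(\bar q)=1$ for every $k$, so \eqref{eq_constant_ufi} becomes vacuous. I would therefore take $\dot D^h_{\gamma+1}$ to be any $\p_{\gamma+1}$-name of an ultrafilter extending the filter generated by $\dot D^h_\gamma$ in $V^{\p_{\gamma+1}}$ (which exists by Zorn in the extension since $\p_\gamma\lessdot\p_{\gamma+1}$).

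If $\gamma\in S^+$, hypothesis \eqref{eq_minus} places $(\dot D^h_\gamma)^-$ inside $V^{\p^-_\gamma}$. Working in $V^{\p^-_\gamma}$, since $\qd_\gamma$ is forced to be $\theta_\gamma$-$\Gamma_{\mathrm{ufi}}$-linked as witnessed by $\bar Q^\gamma$, the piece $\dot Q^\gamma_{h(\gamma)}$ is $((\dot D^h_\gamma)^-,h_R(\gamma))$-lim-linked; this supplies a $\qd_\gamma$-name $\dot D''\in V^{\p^-_\gamma}$ of an ultrafilter extending $(\dot D^h_\gamma)^-$ and a limit map $\lim^{(\dot D^h_\gamma)^-}$ realizing \eqref{eq_constant_ufi} with $\dot D''$ in place of $\dot D^h_{\gamma+1}$. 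I would then apply Fact \ref{lem_UF_upward_directed} inside $V^{\p_\gamma}$, with $M=V^{\p^-_\gamma}$, $N=V^{\p_\gamma}$, $\p=\qd_\gamma$, $D_0=(\dot D^h_\gamma)^-$, $D_0'=\dot D^h_\gamma$ and $\dot D_1=\dot D''$, to obtain a $\qd_\gamma$-name $\dot D^h_{\gamma+1}\in V^{\p_\gamma}$ of an ultrafilter containing both $\dot D^h_\gamma$ and $\dot D''$; property (5)(a) at $\xi=\gamma$ then follows from $\dot D''\subseteq\dot D^h_{\gamma+1}$.

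Conditions (1)--(4) of Definition \ref{dfn_UFI_iteration_with_UFs} are routine from the construction and from the inductive data on $\p_\gamma$. For (5)(b) on the extended iteration, any strictly increasing $\langle\xi_l\rangle_{l<\omega}\subseteq\gamma+1$ must actually lie in $\gamma$ (a single value $\xi_l=\gamma$ is incompatible with strict growth and further terms $<\gamma+1$), so the required density set already belongs to $\dot D^h_\gamma\subseteq\dot D^h_{\gamma+1}$ by the inductive assumption on $\p_\gamma$. The main conceptual step I expect to dwell on is securing $\dot D''$ in $V^{\p^-_\gamma}$ rather than $V^{\p_\gamma}$: it is precisely hypothesis \eqref{eq_minus} that makes Fact \ref{lem_UF_upward_directed} applicable, and hence the entire iterative scheme work.
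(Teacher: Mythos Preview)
Your proposal is correct and follows essentially the same approach as the paper: split into trivial and non-trivial stages, in the non-trivial case use the $\Gamma_{\mathrm{ufi}}$-linkedness of $\dot Q^\gamma_{h(\gamma)}$ (applied to the ultrafilter $(\dot D^h_\gamma)^-\in V^{\p^-_\gamma}$, available precisely by \eqref{eq_minus}) to obtain the intermediate name $\dot D''$, and then apply Fact~\ref{lem_UF_upward_directed} with $M=V^{\p^-_\gamma}$, $N=V^{\p_\gamma}$ to merge $\dot D''$ with $\dot D^h_\gamma$. Your write-up is in fact slightly more explicit than the paper's, correctly identifying $\varepsilon=h_R(\gamma)$ and verifying condition~(5)(b) via the observation that a strictly increasing $\omega$-sequence in $\gamma+1$ must lie entirely in $\gamma$.
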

		
		\begin{proof}
			If $\gamma\in S^-$, 
			any $\dot{D}^h_{\gamma+1}$ extending $\dot{D}^h_\gamma$ for $h\in H$ satisfies \eqref{eq_constant_ufi}, so we may assume $\gamma\in S^+$.
			By Definition \ref{dfn_ufi_linked}, for each $h\in H$ we can find a $\p^-_\gamma*\qd_\gamma$-name $\dot{D}^\prime$ of an ultrafilter extending $(\dot{D}^h_\gamma)^-$ such that 
			for any $\bar{q}=\langle \dot{q}_l:l<\omega\rangle$ satisfying $\Vdash_{\p^-_\gamma}\dot{q}_l\in\dot{Q}^\gamma_{h(\gamma)}$ for all $l<\omega$:
			\begin{equation}
				\label{eq_ufconst_succ}
				\Vdash_{\p^-_\gamma}\textstyle{\lim^{(\dot{D}^h_\gamma)^-}}\displaystyle\bar{q}\Vdash_{\dot{Q}_\gamma}\left\{k<\omega:\dns_k(\bar q)\geq 1-\varepsilon\right\} 
				\in\dot{D}^\prime.
			\end{equation}
			Since $(\dot{D}^h_\gamma)^-$ is extended to $\dot{D}^h_\gamma$ and $\dot{D}^\prime$, we can find a  $\p_\gamma*\qd_\gamma=\p_{\gamma+1}$-name $\dot{D}^h_{\gamma+1}$ of an ultrafilter extending $\dot{D}^h_\gamma$ and $\dot{D}^\prime$ by Lemma \ref{lem_UF_upward_directed}.
			This $\dot{D}^h_{\gamma+1}$ satisfies \eqref{eq_constant_ufi} and we are done.
		\end{proof}
		
		The limit step of the construction of ultrafilters is realized particularly when $\Gamma_{\mathrm{ufi}}=\ufic$.
		
		\begin{lem}(\cite[Main Lemma 7.18, Theorem 7.19]{CMU24})
			\label{lem_ufi_const_lim}
			Let $\gamma$ be limit and 
			$\p_\gamma$ be a $\kappa$-$\ufic$-iteration.
			If $\langle \dot{D}^h_\xi:\xi<\gamma, h\in H\rangle$ witnesses that for any $\xi<\gamma$, $\p_\xi=\p_\gamma\on\xi$ has $\ufic$-limits on $H$,  
			then we can find $\langle\dot{D}^h_\gamma:h\in H\rangle$ such that $\langle \dot{D}^h_\xi:\xi\leq\gamma, h\in H\rangle$ witnesses $\p_\gamma$ has $\ufic$-limits on $H$.
		\end{lem}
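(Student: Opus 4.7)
The plan is to define, for each $h\in H$, the $\p_\gamma$-name $\dot D^h_\gamma$ as a name for an ultrafilter extending the filter generated by $\bigcup_{\xi<\gamma}\dot D^h_\xi$ together with all sets of the form
\[ A_{\bar q,\varepsilon}\coloneq\{k<\omega:\dns_k(\bar q)\geq 1-\varepsilon\}, \]
where $\bar q=\langle\dot q_l:l<\omega\rangle$ sequentially follows $h$ with constant $\varepsilon$ and strictly increasing witnesses $\langle\xi_l:l<\omega\rangle$. Granting such a definition, clauses (1)--(4) of Definition~\ref{dfn_UFI_iteration_with_UFs} are immediate, clause (5)(a) is inherited from the successor steps via Lemma~\ref{lem_ufi_const_succ}, and clause (5)(b) holds by construction. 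The sole content of the lemma is therefore to verify that the generating filter base is forced by $\p_\gamma$ to have the finite intersection property.

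By a standard density argument, this reduces to the following combinatorial core: for every $p\in\p_\gamma$, every finite list $\bar q^0,\dots,\bar q^{n-1}$ of sequences of the above form (with constants $\varepsilon_0,\dots,\varepsilon_{n-1}$ and strictly increasing witnesses $\langle\xi^j_l:l<\omega\rangle$ for $j<n$), and every $\p_\gamma$-name $\dot B$ forced by $p$ to lie in $\dot D^h_{\xi^\dagger}$ for some $\xi^\dagger<\gamma$, one can find $p'\leq p$ in $\p_\gamma$ forcing $\dot B\cap \textstyle\bigcap_{j<n} A_{\bar q^j,\varepsilon_j}$ to be infinite. Choose $\xi^*<\gamma$ with $\xi^*\geq\xi^\dagger$ such that $p\in\p_{\xi^*}$ and $\dot B$ is a $\p_{\xi^*}$-name, pick $N<\omega$ so that $\xi^j_l>\xi^*$ for all $l\geq N$ and $j<n$, and enumerate $\bigcup_{j<n}\{\xi^j_l:l\geq N\}$ in increasing order as $\langle\eta_m:m<\omega\rangle$. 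Then construct $p'$ inductively along $\langle\eta_m\rangle$: at each stage $\eta_m$, use centeredness of $\dot Q^{\eta_m}_{h(\eta_m)}$ to take a common extension of the finitely many tail conditions $\dot q^j_l$ with $\xi^j_l=\eta_m$ still to be absorbed, and apply the successor-step limit $\lim^{(\dot D^h_{\eta_m})^-}$ from Lemma~\ref{lem_ufi_const_succ} to transport the density information $\dns_k\geq 1-\varepsilon_j$ to a $\dot D^h_{\eta_m+1}$-large set of $k$'s; by coherence of the tower $\langle\dot D^h_\xi:\xi\leq\gamma\rangle$, this set still lies in $\dot D^h_{\xi^*}$ and hence meets $\dot B$ infinitely often.

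The main obstacle is the arbitrary interleaving of the supports of the $\bar q^j$: at any single stage $\eta_m$ we must coherently absorb tail conditions coming from several distinct sequences, and the partial local constructions at different stages must be pairwise compatible so that the resulting $p'$ really lies in $\p_\gamma$. This compatibility is exactly what centeredness provides, and is the reason the hypothesis $\ufic$ cannot be weakened to $\ufi$ in this limit step (cf.\ the analogous Lemma~\ref{lem_uf_const_lim} for standard UF-limits). A secondary subtlety relative to the classical UF-limit setting is the need to maintain the density threshold $1-\varepsilon$ on the interval partition $\langle I_k\rangle_{k<\omega}$ uniformly across all $n$ sequences at once; this is handled by pigeonholing on the finitely many values $\varepsilon_j$ actually occurring and invoking the defining property of $(D,\varepsilon_j)$-lim-linkedness separately for each value.
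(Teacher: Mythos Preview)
Your overall plan---define $\dot D^h_\gamma$ as a name for an ultrafilter extending $\bigcup_{\xi<\gamma}\dot D^h_\xi$ together with the sets $A_{\bar q,\varepsilon}$, and reduce everything to showing this family has the (strong) finite intersection property---is exactly the paper's plan, and the reduction to the ``combinatorial core'' is correct. The execution of that core, however, has a genuine gap.

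Your inductive construction of $p'$ ``along $\langle\eta_m:m<\omega\rangle$'' cannot produce a condition in the finite-support iteration $\p_\gamma$: after $\omega$ stages the support contains all $\eta_m$, which is infinite. Moreover, the invocation of $\lim^{(\dot D^h_{\eta_m})^-}$ at each stage is misplaced---that limit function eats an $\omega$-sequence, whereas at any fixed $\eta_m$ only finitely many $\dot q^j_l$ live there---and your coherence claim runs the wrong way: from $\xi^*<\eta_m$ one gets $\dot D^h_{\xi^*}\subseteq\dot D^h_{\eta_m+1}$, not the reverse, so a set in $\dot D^h_{\eta_m+1}$ need not lie in $\dot D^h_{\xi^*}$ and hence need not meet $\dot B$.

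The missing idea is that you should not try to force the intersection to be \emph{infinite} with a single $p'$; by density it suffices, for each $K$, to find below any given $p$ an extension forcing some single $k>K$ into $\dot B\cap\bigcap_{j<n}A_{\bar q^j,\varepsilon_j}$. Fix such $K$ large enough that all $\xi^j_l$ with $l\in I_k$, $k>K$ lie above $\xi^*$; then choose $q\leq p$ in $\p_{\xi^*}$ and a single $k>K$ with $q\Vdash k\in\dot B$. Now only the \emph{finitely many} conditions $\{\dot q^j_l:j<n,\ l\in I_k\}$ are relevant, living at finitely many coordinates above $\xi^*$. Processing these coordinates in increasing order and using centeredness of $\dot Q^{\xi}_{h(\xi)}$ at each one yields a finite-support extension $q'\leq q$ forcing \emph{all} of these $\dot q^j_l$ into the generic; hence $q'$ forces $\dns_k(\bar q^j)=1\geq 1-\varepsilon_j$ for every $j<n$. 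No limit functions, no $\varepsilon$-bookkeeping, and no appeal to the ultrafilter tower above $\xi^*$ are needed in this step.
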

		
		
		\begin{rem}
			\label{rem_centered_simplify}
			This lemma is obtained from \cite[Main Lemma 7.18]{CMU24} by just interpreting it in our framework, but we give a proof of this lemma for the sake of completeness. Also, as seen in \cite[Theorem 7.19]{CMU24}, centeredness is actually not needed here since $|I_k|\to\infty$ now, but assuming centeredness simplifies the proof and for our purposes the assumption is not too strong in the sense that the new forcing notion $\lebb$ introduced later does satisfy $\sigma$-centerdness (Lemma \ref{lem_LE_sigma_centered}). 
		\end{rem} 
		
		\begin{proof}[Proof of Lemma \ref{lem_ufi_const_lim}]
			For $\cf(\gamma)>\omega$ there is nothing to do, so we assume $\cf(\gamma)=\omega$.
			Let $h\in H$ be arbitrary and $S$ be the collection of $\bar{q}=\langle \dot{q}_l:l<\omega\rangle$ such that for some increasing $\langle \xi_l<\gamma:l<\omega \rangle$ converging to $\gamma$, 
			$\Vdash_{\p^-_{\xi_l}}\dot{q}_l\in\dot{Q}^{\xi_l}_{h(\xi_l)}$ holds for each $l<\omega$.
			For $\bar{q}=\langle \dot{q}_l:l<\omega\rangle\in S$, let $\dot{A}(\bar{q})\coloneq\left\{k<\omega:\dns_k(\bar q)\geq 1-\varepsilon\right\}$.
			We will show:
			\begin{equation}
				\Vdash_{\p_\gamma}\text{``}\bigcup_{\xi<\gamma}\dot{D}^h_\xi\cup\{\dot{A}(\bar{q}):\bar{q}\in S\}\text{ has SFIP}\text{''},
			\end{equation}
			where SFIP is short for Strong Finite Intersection Property and means ``every finite subset has infinite intersection''. 
			To prove this, assume we are given $p\in\p_\gamma$, $k^*<\omega$, $\xi<\gamma$, a $\p_\xi$-name $\dot{A}$ of an element of $\dot{D}^h_\xi$, $\{\bar{q}^i=\langle \dot{q}^i_l:l<\omega\rangle:i<n\}\in[S]^{<\omega}$ and increasing ordinals $\langle \xi_l^i<\gamma:l<\omega \rangle$ converging to $\gamma$ for $i<n$, such that $\Vdash_{\p^-_{\xi^i_l}}\dot{q}_l^i\in\dot{Q}^{\xi^i_l}_{h(\xi^i_l)}$ holds for $l<\omega$ and $i<n$. We will find $q\leq p$ and $k>k^*$ such that:  
			\begin{equation*}
				q\Vdash_{\p_\gamma}k\in\dot{A}\cap \bigcap_{i<n} \dot{A}(\bar{q}^i).
			\end{equation*}
			We may assume that $p\in\p_\xi$ by increasing $\xi$ if necessary.
			Since all $\langle \xi^i_l<\gamma:l<\omega \rangle$ are increasing and converge to $\gamma$, there is $k_0>k^*$ such that $\xi_l^i>\xi$ for any $i<n$, $k>k_0$ and $l\in I_k$.
			By Induction Hypothesis, $p\Vdash_{\p_\xi}\text{``}\dot{D}^h_\xi$ is  an ultrafilter'' and hence we can pick $q\leq_{\p_\xi} p$ and $k>k_0$ 
			such that $q\Vdash_{\p_\xi}k\in\dot{A}$.
			Let us reorder $\{\xi_l^i:i<n,l\in I_k\}=\{\xi^0<\cdots<\xi^{N-1}\}$.
			Inducting on $j<N$, we construct $q_j\in\p_{\xi^j+1}$. 
			Let $q_{-1}\coloneq q$. Assume $j<N$ and we have constructed $q_{j-1}$.
			Let $J_j\coloneq\{(i,l)\in n\times I_k:\xi_l^i=\xi^j\}$.
			Since $\p_{\xi^j}$ forces that all $\dot{q}^i_l$ for $(i,l)\in J_j$ are in the same centered component $\dot{Q}^{\xi^j}_{h(\xi^j)}$, we can pick $p_j\leq q_{j-1}$ in $\p_{\xi^j}$ and 
			a $\p_{\xi^j}$-name $\dot{q}_j$ of a condition in $\qd_{\xi^j}$ such that for each $(i,l)\in J_j$, 
			$p_j\Vdash_{\p_{\xi^j}}\dot{q}_j\leq\dot{q}^i_l$.
			Let $q_j\coloneq p_j^\frown\dot{q}_j$.
			By construction, $q^\prime\coloneq q_{N-1}$ satisfies $q^\prime\leq q\leq p$ and for all $i<n$ and $l\in I_k$, $q^\prime\on\xi_l^i\Vdash_{\p_{\xi_l^i}} q^\prime(\xi_l^i)\leq \dot{q}^i_l$. Thus, for any $i<n$, $q^\prime$ forces $\dot{q}^i_l\in\dot{G}_\gamma$ for any $l\in I_k$ and hence also forces $\dns_k(\bar q^i)=1$. Particularly, $q^\prime\Vdash_{\p_\gamma}k\in \dot{A}(\bar{q}^i)$.
			Hence we obtain $q^\prime\Vdash_{\p_\gamma}k\in\dot{A}\cap \bigcap_{i<n} \dot{A}(\bar{q}^i)$.
		\end{proof}

		\begin{rem}
			\label{rem_key_centered}

			The essential part where we need centeredness is when we afterwards perform an iteration (see Construction \ref{con_P7}) by mixing three kinds of limit methods, UF-limits, cUF-limits and $\ufi$-limits. When extending names of ultrafilters for UF-limits and cUF-limits (not necessarily for $\ufi$-limits as seen in the proof of \cite[Main Lemma 7.17]{CMU24}) at limit steps, we inevitably have to use centeredness, at least within the range of the current known forcing techniques.
		\end{rem}
		
		


		\subsection{Uniform-$(h,\bar{\varepsilon})$-$\Delta$-system.}
		
		We will define $\ufi$-limits for ``refined'' sequences of an iteration. 
		
		\begin{dfn}
			\label{dfn_descending_enum}
			Let $h$ be a guardrail and $\bar{\varepsilon}\colon\omega\to(0,1)_\mathbb{Q}$.
			We say $p\in\p_\gamma$ follows $(h,\bar{\varepsilon})$ if $p$ follows $h$ and $h_R(\a_n)=\bar{\varepsilon}(n)$ for all $n<|\dom(p)|$, where $\{\a_n:n<|\dom(p)|\}=\dom(p)$ is the \textit{descending} enumeration of $\dom(p)$ and the definition of $h_R$ is in Definition \ref{dfn_UFI_iteration_settings} \eqref{item_guardrail}. 
		\end{dfn}
		
		\begin{lem}
			Let $\bar{\varepsilon}\colon\omega\to(0,1)_\mathbb{Q}$ and $H$ be a complete set of guardrails.
			Then, there are densely many conditions following $(h,\bar{\varepsilon})$ for some $h\in H$. 
		\end{lem}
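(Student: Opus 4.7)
The plan is to reduce the claim to the completeness of $H$ by constructing, for an arbitrary $p\in\p_\gamma$, a condition $q\leq p$ with $\dom(q)=\dom(p)$ together with ground-model values $\zeta_n<\theta_{\a_n}$ for every $n<N\coloneq|\dom(p)|$, where $\dom(p)=\{\a_0>\cdots>\a_{N-1}\}$ is the descending enumeration, such that $\Vdash_{\p^-_{\a_n}}q(\a_n)\in\dot{Q}^{\a_n}_{\zeta_n,\bar{\varepsilon}(n)}$. Once this is achieved, the finite partial function $\a_n\mapsto(\zeta_n,\bar{\varepsilon}(n))$ extends by completeness of $H$ to some total guardrail $h\in H$, and $q$ then follows $(h,\bar{\varepsilon})$ by construction.

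To produce such a $q$, I would proceed by induction on $n$, working upward from the smallest index $\a_{N-1}$ to the largest $\a_0$, so that the portion $q\on\a_n$ already constructed below $\a_n$ is fixed at the moment we address the coordinate $\a_n$. At stage $n$, the $\theta_{\a_n}$-$\Gamma_{\mathrm{ufi}}$-linkedness of $\qd_{\a_n}$ witnessed by $\bar{Q}^{\a_n}$ (Definition \ref{dfn_UFI_iteration_settings}) guarantees, in $V^{\p^-_{\a_n}}$, that $\bigcup_{\zeta<\theta_{\a_n}}\dot{Q}^{\a_n}_{\zeta,\bar{\varepsilon}(n)}$ is dense in $\qd_{\a_n}$. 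Hence we may find a $\p^-_{\a_n}$-name $\dot{r}$ refining $p(\a_n)$ and a $\p^-_{\a_n}$-name $\dot{\zeta}<\theta_{\a_n}$ with $\Vdash_{\p^-_{\a_n}}\dot{r}\in\dot{Q}^{\a_n}_{\dot{\zeta},\bar{\varepsilon}(n)}$.

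The main technical obstacle is that $\dot{\zeta}$ is a priori merely a $\p^-_{\a_n}$-name, whereas the guardrail $h$ must take a specific ground-model value. I would handle this as in the UF-iteration density argument of \cite[Section 3]{Yam24}: strengthen $q\on\a_n$ inside $\p_{\a_n}$, using $\p^-_{\a_n}\lessdot\p_{\a_n}$, so that its projection to $\p^-_{\a_n}$ decides $\dot{\zeta}=\zeta_n$ for a specific $\zeta_n<\theta_{\a_n}$, and then set $q(\a_n)$ to be a $\p^-_{\a_n}$-name that agrees with $\dot{r}$ below this projection and takes any fixed value in $\dot{Q}^{\a_n}_{\zeta_n,\bar{\varepsilon}(n)}$ elsewhere. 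Iterating this through all $n<N$ yields the desired $q\leq p$, and applying the completeness of $H$ finishes the proof.
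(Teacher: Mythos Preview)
Your plan correctly identifies the two ingredients --- density of $\bigcup_\zeta\dot Q^\xi_{\zeta,\varepsilon}$ at each coordinate and completeness of $H$ --- but the order in which you process the coordinates creates a genuine gap. You propose to fix $q\on\a_n$ first (bottom-up) and then, at coordinate $\a_n$, \emph{strengthen} $q\on\a_n$ so that its projection to $\p^-_{\a_n}$ decides the name $\dot\zeta$. These two moves are in tension: once you strengthen $q\on\a_n$ in $\p_{\a_n}$ (or add coordinates to it), the new condition need no longer have each $q(\a_m)$ for $m>n$ a $\p^-_{\a_m}$-name forced into $\dot Q^{\a_m}_{\zeta_m,\bar\varepsilon(m)}$, and any newly added coordinates shift the descending enumeration and hence all the $\bar\varepsilon$-indices you have already assigned. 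There is also a point you gloss over: $p(\a_n)$ is only a $\p_{\a_n}$-name, so one cannot directly produce a $\p^-_{\a_n}$-name $\dot r$ with $\Vdash_{\p^-_{\a_n}}\dot r\leq p(\a_n)$; the refinement has to be forced by (some extension of) $q\on\a_n$, which again forces you to modify the part you declared ``fixed''.

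The paper's induction on $\gamma$ amounts to processing the coordinates \emph{top-to-bottom}. At the successor step one first extends $p\on\gamma$ freely in $\p_\gamma$ (no guardrail constraints have been imposed there yet) to decide a specific $\zeta_0$ and to obtain a $\p^-_\gamma$-name in $\dot Q^\gamma_{\zeta_0,\bar\varepsilon(0)}$ below $p(\gamma)$; only then is the induction hypothesis applied to the resulting $\p_\gamma$-condition with the shifted sequence $n\mapsto\bar\varepsilon(n+1)$. The paper's remark that ``the descending enumeration works well'' is exactly this: peeling off the top coordinate shifts every remaining index by one, which matches the shift in $\bar\varepsilon$. Reversing your direction of processing to top-down removes the gap.
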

		
		\begin{proof}
			Induct on $\gamma$ recalling that for each $\xi<\gamma$, $\bigcup_{\zeta<\theta_\xi}\dot{Q}^\xi_{\zeta,\varepsilon} \subseteq\qd_\xi$ are forced to be dense for any $\varepsilon\in(0,1)_\mathbb{Q}$. Also note that in the induction proof, the descending enumeration works well. The proof is almost the same as \cite[Lemma 7.5]{CMU24}, so we omit the details.
		\end{proof}
		
		\begin{dfn}(\cite[Definition 8.1]{CMU24})
			\label{dfn_delta_system_UFI}
			Let $\delta$ be an ordinal, 
			$h\in H$, $\bar{\varepsilon}\colon\omega\to(0,1)_\mathbb{Q}$ and 
			$\bar{p}=\langle p_l: l<\delta\rangle\in\p_\gamma^\delta$.
			$\bar{p}$ is an $(h,\bar{\varepsilon})$-uniform $\Delta$-system if:
			\begin{enumerate}
				\item All $p_l$ follow $(h,\bar{\varepsilon})$.
				\item $\{\dom(p_l): l<\delta\}$ forms a $\Delta$-system with some root $\nabla$.
				\item All $|\dom(p_l)|$ are the same $n^\prime$ and $\dom(p_ l)=\{\xi_{n,l}:n<n^\prime\}$ is the \textit{increasing} enumeration.
				\item There is $r^\prime\subseteq n^\prime$ such that $n\in r^\prime\Leftrightarrow\xi_{n,l}\in\nabla$ for $n<n^\prime$.
				\item For $n\in n^\prime\setminus r^\prime$, $\langle\xi_{n, l}: l<\delta \rangle$ is (strictly) increasing.
				
			\end{enumerate}
			Note that for each $n<n^\prime$, $\langle p_l(\xi_{n,l}):l<\delta\rangle$ sequentially follows $h$ with constant $\bar{\varepsilon}(n^\prime-n-1)$, since for each $l<\delta$, $\{\xi_{n,l}:n<n^\prime\}=\dom(p_l)$ is increasing while the enumeration $\{\a_{n,l}:n<n^\prime\}=\dom(p_l)$ in Definition \ref{dfn_descending_enum} is descending.
		\end{dfn}

		The $\Delta$-System Lemma for this uniform $\Delta$-system also holds:
		\begin{lem}(\cite[Theorem 8.3]{CMU24})
			\label{lem_uniform_Delta_System_Lemma}
			Let $h$ be a guardrail, $\bar{\varepsilon}\colon\omega\to(0,1)_\mathbb{Q}$, $\theta$ be an uncountable regular cardinal and $\{p_m:m<\theta\}\subseteq\p_\gamma$ be conditions following $(h,\bar{\varepsilon})$.
			Then, there exists $I\in[\theta]^\theta$ such that $\{p_m:m\in I\}$ forms an $(h,\bar{\varepsilon})$-uniform $\Delta$-system.
		\end{lem}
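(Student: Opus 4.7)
My plan is a straight successive-refinement argument: start from the classical $\Delta$-system lemma and then impose the remaining uniformity clauses one by one via finitary pigeonhole and a monotone-subsequence extraction. Since the additional structure asked for in Definition \ref{dfn_delta_system_UFI} depends only on the finite set $\dom(p_m)$ and on the guardrail values at these ordinals (already fixed by the assumption that each $p_m$ follows $(h,\bar{\varepsilon})$), no forcing content enters the argument, and everything reduces to combinatorics on $\gamma$.

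First I would use that each $p_m\in\p_\gamma$ has finite support, so $\{\dom(p_m):m<\theta\}$ is a family of finite subsets of $\gamma$. The classical $\Delta$-system lemma gives $I_0\in[\theta]^\theta$ such that $\{\dom(p_m):m\in I_0\}$ forms a $\Delta$-system with some root $\nabla$. Since $\{|\dom(p_m)|:m\in I_0\}\subseteq\omega$, a first pigeonhole yields $I_1\in[I_0]^\theta$ on which $|\dom(p_m)|$ takes a constant value $n^\prime$. Enumerating $\dom(p_m)=\{\xi_{0,m}<\cdots<\xi_{n^\prime-1,m}\}$ for each $m\in I_1$, the set $r_m\coloneq\{n<n^\prime:\xi_{n,m}\in\nabla\}$ is a subset of $n^\prime$ and thus ranges over only finitely many possibilities; a second pigeonhole gives $I_2\in[I_1]^\theta$ with constant $r_m=r^\prime$. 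For $n\in r^\prime$ the value $\xi_{n,m}$ is then forced to be the corresponding element of $\nabla$ in the increasing enumeration, hence independent of $m\in I_2$, taking care of the root part of the uniform $\Delta$-system.

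It remains to make the non-root columns strictly increasing in $m$. For each $n\in n^\prime\setminus r^\prime$ the ordinals $\{\xi_{n,m}:m\in I_2\}$ are pairwise distinct, because the sets $\dom(p_m)\setminus\nabla$ are pairwise disjoint. I would view $m\mapsto(\xi_{n,m})_{n\in n^\prime\setminus r^\prime}$ as a map from $I_2$ into $\gamma^k$ where $k\coloneq|n^\prime\setminus r^\prime|$, and then process the $k$ coordinates one after the other: at each stage I extract a subset of size $\theta$ on which the next coordinate becomes strictly increasing, noting that passing to a further subset preserves the monotonicity already achieved on earlier coordinates. After $k$-many such refinements I obtain $I\in[\theta]^\theta$ on which $\langle\xi_{n,m}:m\in I\rangle$ is strictly increasing for every $n\in n^\prime\setminus r^\prime$. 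Since each $p_m$ already follows $(h,\bar{\varepsilon})$, this property is inherited by the sub-family $\{p_m:m\in I\}$, which then satisfies all five clauses of Definition \ref{dfn_delta_system_UFI}.

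The only genuinely delicate point is the coordinate-by-coordinate extraction of a monotone subsequence of size $\theta$ from a $\theta$-indexed family of distinct ordinals; this is immediate when $\theta$ is regular uncountable (any such family contains an increasing subsequence of the same cardinality), which is the setting in every later use of the lemma. For completely general uncountable $\theta$ the same conclusion holds after a short argument using $\cf(\theta)$, but this refinement is not needed for the applications in Section \ref{sec_separation}.
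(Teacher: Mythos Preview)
Your argument is correct and is the standard one; note that the paper does not actually prove this lemma but merely cites \cite[Theorem 8.3]{CMU24}, so there is no alternative approach to compare with. One small correction to your closing remark: the claim that for singular $\theta$ ``the same conclusion holds after a short argument using $\cf(\theta)$'' is not right---already the classical $\Delta$-system lemma fails at singular cardinals (for instance, at $\theta=\aleph_\omega$ the family of pairs $\{n,\alpha\}$ with $n<\omega$ and $\alpha\in[\aleph_n,\aleph_{n+1})$ admits no $\Delta$-subsystem of size $\aleph_\omega$). The statement should therefore be read with $\theta$ regular, which, as you observe, is the only case used in Section~\ref{sec_separation}.
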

		
		\begin{dfn}(\cite[Definition 8.5]{CMU24})
			\label{dfn_UFI-limit_condition_for_iteration}
			Let $h\in H$, $\bar{\varepsilon}\colon\omega\to(0,1)_\mathbb{Q}$, $\bar{p}=\langle p_l:l<\omega\rangle\in(\p^h_\gamma)^\omega$ be an $(h,\bar{\varepsilon})$-uniform (countable) $\Delta$-system with root $\nabla$.
			We define the limit condition $p^\infty=\lim^h\bar{p}\in\p_\gamma$ as follows:
			\begin{enumerate}
				\item $\dom(p^\infty)\coloneq\nabla$.
				\item For $\xi\in\nabla$,
				$\Vdash_{\p^-_\xi}p^\infty(\xi):=\lim^{(\dot{D}^h_\xi)^-}\langle p_l(\xi):l<\omega\rangle$.
			\end{enumerate}
		\end{dfn}
		
		\begin{lem}(\cite[Theorem 8.6]{CMU24})
			\label{lem_UFI_principle}
			$p^\infty\Vdash_{\p_\gamma}\{k<\omega:\dns_k(\bar{p})\geq1-\sum_{n<n^\prime}\bar{\varepsilon}(n)\}\in \dot{D}_\gamma^h$.
			
			In particular, for any $\varepsilon\geq\sum_{n<n^\prime}\bar{\varepsilon}(n)$, 
			\begin{equation}
				\label{eq_UFI_principle}
				p^\infty\Vdash_{\p_\gamma}\exists^\infty k<\omega~\dns_{k}(\bar{p})\geq1-\varepsilon.
			\end{equation}
		\end{lem}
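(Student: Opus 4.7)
The plan is to decompose the density estimate coordinate-by-coordinate along the $\Delta$-system root $\nabla$ and the ``petals'' of the $\Delta$-system, and then recombine via the fact that $\dot{D}^h_\gamma$ is a filter.

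First, I would fix $n<n'$ and look at the sequence $\langle p_l(\xi_{n,l}):l<\omega\rangle$ of $l$-th coordinates. By Definition \ref{dfn_delta_system_UFI}, this sequence sequentially follows $h$ with constant $\bar{\varepsilon}(n'-n-1)$ (the descending-vs-increasing reindexing converts the domain enumeration into the one governing $\bar{\varepsilon}$). There are two cases. If $n\in r'$, then $\xi_{n,l}$ is a constant $\xi^*_n\in\nabla$ and the sequence lies in the single centered piece $\dot{Q}^{\xi^*_n}_{h(\xi^*_n)}$; applying \eqref{eq_constant_ufi} to the limit $p^\infty(\xi^*_n)=\lim^{(\dot{D}^h_{\xi^*_n})^-}\langle p_l(\xi^*_n):l<\omega\rangle$ from Definition \ref{dfn_UFI-limit_condition_for_iteration} gives that
\begin{equation*}
A_n:=\left\{k<\omega:\frac{|\{l\in I_k:p_l(\xi_{n,l})\in\dot{H}_{\xi_{n,l}}\}|}{|I_k|}\geq 1-\bar{\varepsilon}(n'-n-1)\right\}
\end{equation*}
is forced into $\dot{D}^h_{\xi^*_n+1}\subseteq\dot{D}^h_\gamma$ by $p^\infty\on(\xi^*_n+1)$. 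If $n\notin r'$, then $\langle\xi_{n,l}:l<\omega\rangle$ is strictly increasing, so \eqref{eq_increasing_ufi} applies directly and $A_n\in\dot{D}^h_\gamma$ is forced outright by $\one_{\p_\gamma}$, hence by $p^\infty$ as well.

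Next I would intersect. Since $\dot{D}^h_\gamma$ is forced to be a filter and $n'$ is finite, $p^\infty\Vdash_{\p_\gamma}\bigcap_{n<n'}A_n\in\dot{D}^h_\gamma$. For any $k\in\bigcap_{n<n'}A_n$, the bad set at level $n$ inside $I_k$ has size at most $\bar{\varepsilon}(n'-n-1)|I_k|$, so by a union bound the set of $l\in I_k$ such that $p_l(\xi_{n,l})\in\dot{H}_{\xi_{n,l}}$ for all $n<n'$ simultaneously has size at least $\bigl(1-\sum_{n<n'}\bar{\varepsilon}(n'-n-1)\bigr)|I_k|=\bigl(1-\sum_{n<n'}\bar{\varepsilon}(n)\bigr)|I_k|$ after reindexing. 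Since $p_l\in\dot{G}_\gamma$ iff every coordinate of $p_l$ lies in the corresponding generic, this yields $\dns_k(\bar{p})\geq 1-\sum_{n<n'}\bar{\varepsilon}(n)$ for every such $k$, proving the first assertion. The ``in particular'' clause follows because $\dot{D}^h_\gamma$ is non-principal, so any set it contains is infinite, giving \eqref{eq_UFI_principle}.

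The only subtle point, which I would treat carefully, is the bookkeeping between the descending enumeration $\langle\a_n\rangle$ used to define ``follows $(h,\bar{\varepsilon})$'' and the increasing enumeration $\langle\xi_{n,l}\rangle$ used in the $\Delta$-system: one must verify that when $p_l$ is rewritten with respect to the increasing enumeration, the constant attached to coordinate $n$ is indeed $\bar{\varepsilon}(n'-n-1)$ uniformly in $l$, so that the two limit principles from Definition \ref{dfn_UFI_iteration_with_UFs} can be invoked with the same $\varepsilon$ for all $l$ at a given $n$. Beyond this indexing check, everything is routine and the ultrafilter filter-property does the rest of the combinatorial work.
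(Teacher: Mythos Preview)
Your proposal is correct and is actually more direct than the paper's argument. The paper proceeds by induction on $\gamma$: in the successor case $\gamma+1$ it applies the induction hypothesis to $\langle p_l\on\gamma:l<\omega\rangle$ to obtain a set $\dot A\in\dot D^h_\gamma$, then invokes \eqref{eq_constant_ufi} at the single new coordinate $\gamma$ to obtain $\dot B\in\dot D^h_{\gamma+1}$, and combines via Lemma~\ref{lem_elem_comb}; in the limit case it sets $\xi=\max(\nabla)+1$, applies the induction hypothesis to $\bar p\on\xi$, and handles the remaining petal coordinates $n\in[n'',n')$ with \eqref{eq_increasing_ufi}. You instead skip the induction entirely and treat all $n'$ coordinates in one shot: \eqref{eq_constant_ufi} for each $n\in r'$ (using that $p^\infty(\xi^*_n)$ is precisely the required limit), \eqref{eq_increasing_ufi} for each $n\notin r'$, and then a single application of the union bound of Lemma~\ref{lem_elem_comb} after intersecting inside the filter $\dot D^h_\gamma$. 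Both routes rely on the same two limit axioms from Definition~\ref{dfn_UFI_iteration_with_UFs} and on the observation that $p_l\in\dot G_\gamma$ iff each coordinate $p_l(\xi_{n,l})$ lies in the corresponding $\dot H_{\xi_{n,l}}$; your version simply makes it explicit that no recursion is needed once those axioms are in hand, while the paper's inductive organization mirrors how the ultrafilters $\dot D^h_\xi$ were themselves constructed along the iteration.
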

		
		For the proof, we use the following elementary combinatorial lemma:
		\begin{lem}
			\label{lem_elem_comb}
			Let $I$ be a non-empty finite set, $n<\omega$, $A_i\subseteq I$ and $\varepsilon_i\in(0,1)_\q$ for each $i<n$.
			Then,
			\begin{equation}
				\label{eq_counting_measure}
				\text{if }\frac{|A_i|}{|I|}\geq 1-\varepsilon_i\text{ for each }i<n, \text{ then }\frac{|\bigcap_{i<n}A_i|}{|I|}\geq1-\textstyle\sum_{i<n}\varepsilon_i.
			\end{equation}
		\end{lem}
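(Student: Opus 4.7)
The plan is to reduce the claim to the subadditivity (union bound) of the counting measure by passing to complements. For each $i<n$ set $B_i := I \setminus A_i$; the hypothesis $|A_i|/|I| \geq 1-\varepsilon_i$ is equivalent to $|B_i|/|I| \leq \varepsilon_i$. The target inequality is then equivalent to
\begin{equation*}
\frac{\left|\bigcup_{i<n} B_i\right|}{|I|} \leq \sum_{i<n}\varepsilon_i,
\end{equation*}
since $I \setminus \bigcap_{i<n} A_i = \bigcup_{i<n} B_i$.

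Next I would establish this reformulated inequality by the standard subadditivity of the counting measure, namely $\left|\bigcup_{i<n} B_i\right| \leq \sum_{i<n} |B_i|$. This follows either by a trivial induction on $n$ using $|X \cup Y| \leq |X|+|Y|$, or simply by noting that the characteristic function of $\bigcup_{i<n} B_i$ is pointwise bounded by $\sum_{i<n} \mathbf{1}_{B_i}$ and summing over $I$. Dividing by $|I|$ and applying the hypothesis termwise yields the desired bound.

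There is essentially no obstacle here; the lemma is a purely finitary counting statement, and the only thing to be careful about is the direction of the inequalities when switching between $A_i$ and its complement $B_i$. The case $n=0$ is trivially handled since $\bigcap_{i<0} A_i = I$ and the empty sum is $0$, so $|I|/|I| = 1 \geq 1-0$ holds with equality.
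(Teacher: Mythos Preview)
Your proof is correct and is essentially identical to the paper's: both pass to complements $B_i = I \setminus A_i$ and reduce the statement to the subadditivity of the counting measure for $\bigcup_{i<n} B_i$. The paper does not spell out the $n=0$ case or the induction for subadditivity, so your write-up is in fact slightly more detailed.
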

		
		\begin{proof}
			Put $B_i\coloneq I\setminus A_i$. Then, \eqref{eq_counting_measure} is equivalent to:
			\begin{equation*}
				\text{if }\frac{|B_i|}{|I|}\leq \varepsilon_i\text{ for each }i<n, \text{ then }\frac{|\bigcup_{i<n}B_i|}{|I|}\leq\textstyle\sum_{i<n}\varepsilon_i,
			\end{equation*}
			which is obvious by the subadditivity of the counting measure.
		\end{proof}


		\begin{proof}[Proof of Lemma \ref{lem_UFI_principle}]
			Induct on $\gamma$.
			
			$\textit{Successor step.}$ Let $\overline{p*q}=\langle(p_l,\dot{q}_l):l<\omega\rangle\in(\p_{\gamma+1})^\omega$ be an $(h,\bar{\varepsilon})$-uniform $\Delta$-system and for this sequence $\overline{p*q}$ we use the same parameters as in Definition \ref{dfn_delta_system_UFI}.	
			To avoid triviality, we may assume that $\gamma\in\nabla$.
			Let $p^\infty\coloneq\lim^h\langle p_l:l<\omega\rangle\in\p_\gamma$ and 
			$\Vdash_{\p^-_\gamma}\dot{q}^\infty\coloneq\lim^{(\dot{D}^h_\gamma)^-}\langle \dot{q}_l:l<\omega\rangle$.
			Let $\varepsilon^+\coloneq\bar{\varepsilon}(0)$ and $\varepsilon^-\coloneq\sum_{0<n<n^\prime}\bar{\varepsilon}(n)$ (note that $n^\prime>0$ since $\gamma\in\nabla$).
			By Induction Hypothesis, $p^\infty\Vdash_{\p_\gamma}\dot{A}\coloneq\{k<\omega:\dns_k(\bar{p})\geq 1-\varepsilon^-\}\in\dot{D}^h_\gamma$.
			By \eqref{eq_constant_ufi}, $\Vdash_{\p_\gamma}\dot{q}^\infty\Vdash_{\qd_\gamma}\dot{B}\coloneq\{k<\omega:\dns_k(\bar{q})\geq 1-\varepsilon^+\}\in\dot{D}^h_{\gamma+1}$.	
			For any $k<\omega$, $\forces_{\p_{\gamma+1}}\inG_k(\bar{p})\cap\inG_k(\bar{q})\subseteq\inG_k(\overline{p*q})$ holds, so
			$p^\infty*\dot{q}^\infty$ forces that for all $k\in\dot{A}\cap\dot{B}$, $\displaystyle\frac{|\inG_k(\overline{p*q})|}{|I_k|}\geq\frac{|\inG_k(\bar{p})\cap\inG_k(\bar{q})|}{|I_k|}\geq1-(\varepsilon^-+\varepsilon^+)=1-\textstyle\sum_{n<n^\prime}\bar{\varepsilon}(n)$ by Lemma \ref{lem_elem_comb} and hence we obtain 
			$p^\infty*\dot{q}^\infty\forces_{\p_{\gamma+1}}\{k<\omega:\dns_k(\overline{p*q})\geq 1-\sum_{n<n^\prime}\bar{\varepsilon}(n)\}\supseteq\dot{A}\cap\dot{B}\in\dot{D}^h_{\gamma+1}$.

			%
			%
			%
			%
			%
			
			$\textit{Limit step.}$ Let $\gamma$ be limit and $\bar{p}=\langle p_l:l<\omega\rangle\in(\p^h_\gamma)^\omega$ be an $(h,\bar{\varepsilon})$-uniform-$\Delta$-system and for this sequence $\bar{p}$ we use the same parameters as in Definition \ref{dfn_delta_system_UFI}.
			Let $\xi\coloneq\sup(\nabla)+1<\gamma$ (note that $\nabla$ might be empty) and $\bar{p}\on\xi\coloneq\langle p_l\on\xi:l<\omega\rangle$.
			Since 
			$\bar{p}\on\xi$ is also an $(h,\bar{\varepsilon})$-uniform $\Delta$-system with root $\nabla$,
			$p^\infty\coloneq\lim^h\bar{p}\on\xi=\lim^h\bar{p}$ by definition.
			Let $n^{\prime\prime}\coloneq\sup({r^\prime})+1$.
			By Induction Hypothesis, $p^\infty\Vdash_{\p_\xi}\dot{A}\coloneq\{k<\omega:\dns_k(\bar{p}\on\xi)\geq 1-\sum_{n<n^{\prime\prime}}\bar{\varepsilon}(n^\prime-n-1)\}\in\dot{D}^h_\xi$.
			Since $\langle \xi_{n,l}:l<\omega \rangle$ is increasing for $n\in\left[n^{\prime\prime},n^\prime\right)$,
			by \eqref{eq_increasing_ufi}, 
			$\Vdash_{\p_\gamma}\dot{B}_n\coloneq\{k<\omega:\dns_k(\bar{p}_n)\geq1-\bar{\varepsilon}(n^\prime-n-1)\}\in\dot{D}^h_\gamma$ for $n\in\left[n^{\prime\prime},n^\prime\right)$ where $\bar{p}_n\coloneq\langle p_l(\xi_{n,l}):l<\omega\rangle$, and hence
			$p^\infty$ forces $\dot{X}\coloneq\dot{A}\cap\bigcap_{n\in\left[n^{\prime\prime},n^\prime\right)} \dot{B}_n\in \dot{D}^h_\gamma$. 
			Since $\dom(p_l)=\dom(p_l\on\xi)\cup\{\xi_{n,l}:n\in\left[n^{\prime\prime},n^\prime\right)\}$ for any $l<\omega$, 
			$\forces_{\p_{\gamma}}\inG_k(\bar{p})\supseteq\inG_k(\bar{p}\on\xi)\cap \bigcap_{n\in\left[n^{\prime\prime},n^\prime\right)}\inG_k(\bar{p}_n)$ for any $k<\omega$. 
			Thus by Lemma \ref{lem_elem_comb}, $p^\infty$ forces that for each $k\in\dot{X}$, $\dns_k(\bar{p})\geq1-\left(\sum_{n<n^{\prime\prime}}\bar{\varepsilon}(n^\prime-n-1)+\sum_{n\in\left[n^{\prime\prime},n^\prime\right)}\bar{\varepsilon}(n^\prime-n-1)\right)=1-\sum_{n<n^{\prime}}\bar{\varepsilon}(n)$ and hence $p^\infty\forces_{\p_\gamma}\{k<\omega:\dns_k(\bar{p})\geq1-\sum_{n<n^\prime}\bar{\varepsilon}(n)\}\supseteq\dot{X}\in \dot{D}^h_\gamma$.
		\end{proof}

		\subsection{Application to $\none$}
		
		To control $\none$ using $\ufi$-limits, we first iterate Cohen forcings as in the case of the standard ultrafilter-limits:
		\begin{ass}
			\label{ass_first_Cohen_UFI}
			\begin{enumerate}
				\item $\kappa\leq \lambda$ are uncountable cardinals, $\kappa$ is regular and $\gamma=\lambda+\lambda$.
				\item $\p_\gamma$ is a $\kappa$-$\Gamma_\mathrm{ufi}$-iteration with $\Gamma_\mathrm{ufi}$-limits on $H$ (with the same parameters as in Definition \ref{dfn_UFI_iteration_settings} and \ref{dfn_UFI_iteration_with_UFs}).
				\item $H$ is complete and $|H|<\kappa$.
				\item For $\xi<\lambda$, $\Vdash_{\p^-_\xi}\qd_\xi=\mathbb{C}$, Cohen forcing. Note that $\mathbb{C}$ is $\kappa$-$\Gamma_\mathrm{ufi}$-linked by Example \ref{lem_size_UFIC_linked}.
			\end{enumerate}
		\end{ass}
		We show that UFI-limits keep $\none$ small:
		\begin{thm}(\cite[Theorem 10.4]{CMU24})
			\label{thm_UFI_keeps_nonE_small}
			Suppose Assumption \ref{ass_first_Cohen_UFI}. 
			Then, $\p=\p_\gamma$ forces $C_{[\lambda]^{<\kappa}}\lq C_\mathcal{E}$, in particular, $\none\leq\kappa$.
		\end{thm}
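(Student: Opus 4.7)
The plan is to imitate the proofs of Theorems \ref{thm_uf_limit_keeps_b_small} and \ref{thm_cUF_keep_estar_small}, but now using the UFI-limit machinery developed in this section and targeting the relational system $C_\mathcal{E}$. As in those proofs, the natural candidate for a witnessing sequence of $C_{[\lambda]^{<\kappa}}\lq C_\mathcal{E}$ in the sense of Fact \ref{fac_Tukey_order_equivalence_condition} is the family of Cohen reals $\langle\dot{c}_\xi:\xi<\lambda\rangle$ added by the first $\lambda$ stages. So it suffices to show that $\p_\gamma$ forces: for every $\dot{E}\in\mathcal{E}$ we have $|\{\xi<\lambda:\dot{c}_\xi\in\dot{E}\}|<\kappa$. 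By ccc and the $\sigma$-ideal structure of $\mathcal{E}$, one may reduce to $\dot{E}$ being a closed null set.

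Suppose toward contradiction that some $p\in\p_\gamma$ forces $|\{\xi<\lambda:\dot{c}_\xi\in\dot{E}\}|\geq\kappa$. I would invoke the Bartoszynski-style combinatorial coding of closed null sets adapted to the interval partition $\bari$ with $|I_k|=2^k$: there is a $\p_\gamma$-name $\langle\dot{J}_k\subseteq 2^{I_k}:k<\omega\rangle$ with vanishing density $|\dot{J}_k|/2^{|I_k|}\to 0$ such that $\dot{c}_\xi\in\dot{E}$ entails $\dot{c}_\xi\upharpoonright I_k\in\dot{J}_k$ for all sufficiently large $k$. For $\kappa$ many $\xi$ I pick $p_\xi\leq p$ and a common threshold $N^*$ with $p_\xi\Vdash\forall k\geq N^*\ \dot{c}_\xi\upharpoonright I_k\in\dot{J}_k$. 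Fixing $\bar{\varepsilon}\colon\omega\to(0,1)_\mathbb{Q}$ with $\sum_n\bar{\varepsilon}(n)$ arbitrarily small, the assumptions $|H|<\kappa$ and completeness of $H$ let me restrict to a $\kappa$-sized subfamily of $p_\xi$'s all following a common $(h,\bar{\varepsilon})$.

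I would then apply the Uniform $\Delta$-system Lemma (Lemma \ref{lem_uniform_Delta_System_Lemma}) to extract an $(h,\bar{\varepsilon})$-uniform $\Delta$-system, from which I take a countable sub-$\Delta$-system $\bar{p}=\langle p_l:l<\omega\rangle$. The limit condition $p^\infty:=\lim^h\bar{p}$ is given by Definition \ref{dfn_UFI-limit_condition_for_iteration}, and by Lemma \ref{lem_UFI_principle} it forces $\{k<\omega:\dns_k(\bar{p})\geq 1-\sum_n\bar{\varepsilon}(n)\}\in\dot{D}^h_\gamma$. In particular this set is infinite, and for each such $k$ the fraction of indices $l\in I_k$ with $p_l\in\dot{G}_\gamma$ is at least $1-\sum_n\bar{\varepsilon}(n)$; for each such $l$, the statement $\dot{c}_{\xi_l}\upharpoonright I_k\in\dot{J}_k$ is forced.

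The hardest step, and the combinatorial heart of the argument, is the final contradiction: translating this UFI-density statement into a clash with the vanishing measure density of $\dot{J}_k$. Following \cite[Theorem 10.4]{CMU24}, one exploits the mutual Cohen-genericity of the $\dot{c}_{\xi_l}$ at distinct stages $\xi_l$, together with the guardrail-controlled centered components of Cohen forcing whose stems can be prescribed via the completeness of $H$, to show that the forced large fraction of $l\in I_k$ with restriction in $\dot{J}_k$ forces $|\dot{J}_k|/2^{|I_k|}$ to stay bounded away from $0$ on a UFI-positive set of $k$'s, contradicting the assumed vanishing density. I expect this delicate measure-versus-density bookkeeping, particularly calibrating how the choice $|I_k|=2^k$ absorbs the influence of the Cohen stems $p_l(\xi_l)$, to be where most of the technical work lies.
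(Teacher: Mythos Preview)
Your outline follows the paper's strategy up through the extraction of an $(h,\bar{\varepsilon})$-uniform $\Delta$-system, but there is a genuine gap at precisely the point you flag as hardest, and your handwave does not cover it. The missing idea is that you must \emph{modify the Cohen conditions before taking the limit}, not after. Concretely: once you have thinned so that all $p_l(\beta_l)$ equal a common Cohen stem $s\in 2^{<\omega}$ and the $\beta_l$ lie outside the root $\nabla$, you fix a bijection $\sigma\colon\omega\to 2^{<\omega}$ with $\{\sigma(l):l\in I_k\}=2^k$ for every $k$ (this is exactly where $|I_k|=2^k$ is used), and define $q_l\leq p_l$ by setting $q_l(\beta_l):=s^\frown\sigma(l)$. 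Since the $\beta_l$ are outside $\nabla$ and are trivial (Cohen) stages, the $q_l$ follow a new common countable partial guardrail, which by completeness of $H$ extends to some $h'\in H$; you then take $q^\infty:=\lim^{h'}\bar{q}$, \emph{not} $\lim^h\bar{p}$. Now for $l\in I_k$ with $q_l\in\dot{G}$ you get $s^\frown\sigma(l)\in\dot{T}$ directly, so a $(1-\varepsilon)$-fraction of $l\in I_k$ in $\dot{G}$ translates immediately into $|\dot{T}\cap[s]\cap 2^{|s|+k}|/2^k\geq 1-\varepsilon$, and monotonicity of this ratio in $k$ gives $\mu([\dot{T}])>0$, the contradiction.

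Without this stem-prescription step, taking the limit of the original $\bar{p}$ gives you nothing usable: the conditions $p_l$ all force the \emph{same} stem $s$ at their respective Cohen coordinates, so ``many $p_l\in\dot{G}$'' only says many distinct generic reals extend $s$ and lie in $[\dot{T}]$, which is no contradiction at all. Your appeal to ``mutual Cohen-genericity'' cannot repair this, since the conditions only determine finite initial segments. Relatedly, the paper does not use your $\dot{J}_k\subseteq 2^{I_k}$ interval coding; it works directly with the tree $\dot{T}$ of a single closed null set and counts initial segments of length $|s|+k$. The tree coding is what makes the bijection $\sigma$ land exactly where it needs to.
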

		
		The theorem is obtained from \cite[Theorem 10.4]{CMU24} by just interpreting it in our framework, but we give a proof of it for the sake of completeness.

		\begin{proof}
			We may assume that $\mathcal{E}$ is defined in $2^\omega$ (see footnote \ref{footnote:E}). 
			We show that the first $\lambda$-many Cohen reals $\{\dot{c}_\b:\b<\lambda\}$ witness $C_{[\lambda]^{<\kappa}}\lq C_\mathcal{E}$.
			Assume towards contradiction that there exist a $\p$-name $\dot{C}$ of a member of $\mathcal{E}$ and a condition $p\in\p$ such that $p\forces |\{\b<\lambda:\dot{c}_\b\in \dot{C}\}|\geq\kappa$. 
			Since $\forces\dot{C}\in\mathcal{E}$, there exist $\p$-names $\{\dot{A}_j:j<\omega\}$ of closed null sets in $2^\omega$ such that $\forces\dot{C}\subseteq\bigcup_{j<\omega}\dot{A}_j$.
			Since each $\dot{A}_j$ is closed in $2^\omega$, there is a $\p$-name $\dot{T}_j$ of a tree in $2^{<\omega}$ such that $\dot{A}_j=[\dot{T}_j]=\{x\in2^\omega:x\on n \in \dot{T}_j \text{ for }n<\omega\}$. For each $\a<\kappa$, inductively take $p_\a\leq p$, $\b_\a<\lambda$ and $j_\a<\omega$ such that 
			$p_\a\forces \b_\a\in\{\b<\lambda:\dot{c}_\b\in \dot{C}\}\setminus\{\b_{\a^\prime}:\a^\prime<\a\}\text{ and }\dot{c}_{\b_\a}\in \dot{A}_{j_\a}=[\dot{T}_{j_\a}]$, which is possible since $p\forces |\{\b<\lambda:\dot{c}_\b\in \dot{C}\}|\geq\kappa$ and $\a<\kappa$.
			
			Fix some $\bar{\varepsilon}\colon\omega\to(0,1)_\mathbb{Q}$ with $\varepsilon\coloneq\sum_{n<\omega}\bar{\varepsilon}(n)<1$. 
			By extending and thinning, 	we may assume the following: 
			\begin{enumerate}
				\item $\b_\a\in\dom(p_\a)$.
				\item $p_\a$ follows some $(h_\a,\bar{\varepsilon})$ where $h_\a\in H$.
				\item All $h_\a$ are the same $h$.
				\item $\langle p_\a:\a<\kappa\rangle$ forms an $(h,\bar{\varepsilon})$-uniform $\Delta$-system with root $\nabla$.
				\item All $\dot{T}_{j_\a}$ are the same $\dot{T}$.
				\item All $p_\a(\b_\a)$ are the same Cohen condition $s\in\sq$.
			\end{enumerate}
			Note that $\b_\a\notin\nabla$ since all $\b_\a$ are distinct. Pick the first $\omega$-many $p_\a$.
			Fix some bijection $\sigma\colon\omega\to\sq$ such that for any $k<\omega$, $\{\sigma(l):l\in I_k\}=2^k$ (recall $|I_k|=2^k$).
			For each $l<\omega$, define $q_l\leq p_l$ by extending the $\b_l$-th position to $q_l(\b_l)\coloneq s^\frown\sigma(l)$.
			Since $\b_l\notin\nabla$, all $q_l$ follow a new common partial countable guardrail, which is extended to some $h^\prime\in H$ by the completeness of $H$.
			Thus, $\bar{q}\coloneq\langle q_l:l<\omega\rangle$ forms an $(h^\prime,\bar{\varepsilon})$-uniform $\Delta$-system with root $\nabla$ (note that $\bar{\varepsilon}$ is the same since $\b_l<\kappa$ are trivial stages and hence Lemma \ref{lem_size_UFIC_linked} can be applied), so we can take the UFI-limit $q^\infty\coloneq\lim^{h^\prime}\bar{q}$.
			By \eqref{eq_UFI_principle}, we have:
			\begin{equation}
				\label{eq_q_ufi}
				q^\infty\Vdash_{\p_\gamma}\exists^\infty k<\omega~\dns_{k}(\bar{q})\geq1-\varepsilon,
			\end{equation}
			since $\varepsilon>\sum_{n<n^{\prime}}\bar{\varepsilon}(n)$ where $n^\prime$ is the same size of all $\dom(q_l)$.
			By the definition of $q_l$, note that for each $k<\omega$ and $l\in I_k$, we have:
			\begin{equation}
				\label{eq_ql}
				q_l\forces\dot{c}_{\b_l}\in[\dot{T}]\text{ and } \dot{c}_{\b_l}\on (|s|+k)=s^\frown\sigma(l), \text{ so }s^\frown\sigma(l)\in\dot{T}.
			\end{equation}
			Let $G$ be any $\p$-generic filter containing $q^\infty$ and work in $V[G]$.
			For $k<\omega$, let $L_k\coloneq\inG_k(\bar{q})[G]=\{l\in I_k:q_l\in G\}$.
			By \eqref{eq_q_ufi} and \eqref{eq_ql}, we have:
			\begin{equation}
				\exists^\infty k<\omega~\dns_{k}(\bar{q})[G]=\frac{|L_k|}{|I_k|}\geq1-\varepsilon\text{ and for }l\in L_k, s^\frown\sigma(l)\in\dot{T}[G].
			\end{equation}
			Thus, for infinitely many $k<\omega$, we obtain (recall $|I_k|=2^k$):
			\begin{equation}
				\label{eq_measure}
				\dfrac{|\dot{T}[G]\cap[s]\cap2^{|s|+k}|}{2^k}\geq 1-\varepsilon,
			\end{equation}
			hence \eqref{eq_measure} holds for all $k<\omega$ since the value on the left-hand side is non-increasing in $k$.
			Therefore, we have $\dfrac{\mu([T])}{\mu([s])}\geq 1-\varepsilon>0$ where $\mu$ denotes the Lebesgue measure on $2^\omega$, which contradicts that $[T]$ is null.
		\end{proof}
		
		\subsection{Poset with $\ufic$-limits and $\sigma$-centeredness}
		In this subsection, we introduce the forcing notion $\lebb$ which satisfies all the required properties mentioned in Section \ref{sec_intro}, and this is the main contribution of this paper. 
		
		First, fix a function $b\in\oo$ with $b(n)\geq \exp(n)\coloneq 2^n$ for all $n<\omega$.
		
		\begin{dfn}
			For $x,y\in\prod b$ and $D\in\ooo$, $x\neq^*(D,y)$ if $x(n)\neq y(n)$ for all but finitely many $n\in D$.
			Define the relational system $\lebf=\lebf_b\coloneq\langle \prod b, \ooo\times\prod b,\neq^*\rangle$ (short for ``locally eventually different'').
		\end{dfn}
		

		\begin{lem}
			$C_\m\lq\lebf$. In particular, $\bb(\lebf)\leq\nonm$.
		\end{lem}
		
		\begin{proof}
			We may consider the meager ideal $\m$ in the product space $\prod b$ of discrete spaces $\{ b(n):n<\omega\}$.
			For any $m<\omega$ and $(D,y)\in\ooo\times\prod b$, the set $\bigcap_{m\leq n\in D}\{x\in\prod b: x(n)\neq y(n)\}$ is closed nowhere dense in the space $\prod b$, which induces the Tukey order $C_\m\lq\lebf$. 
		\end{proof}
		
		\begin{rem}
			$\bb(\lebf)<\nonm$ consistently holds: Brendle and L{\"o}we's work \cite[Theorem 12]{BL11} implies that Hechler forcing is ``$\lebf$-good'', (a property concerning the preservation of $\lebf$-unbounded families in the context of fsi, see e.g. \cite{JS90}, \cite{Bre91}, \cite{CM19}), and hence $\aleph_1=\bb(\lebf)<\bb=\nonm=2^{\aleph_0}$ holds in the Hechler model.
		\end{rem}
		
		
		Thus, in the upcoming forcing iteration construction we use this relational system $\lebf$ instead to calculate the value of $\nonm$. 
		For this purpose, we introduce the following poset $\lebb$ which generically adds an $\lebf$-dominating real and hence increases $\bb(\lebf)\leq\nonm$:

		\begin{dfn}
			\begin{enumerate}
				\item The poset $\lebb=\lebb_b$ is defined as follows:
				\begin{enumerate}
					\item The conditions are triples $(d,s,\varphi)$ where $d\in\sq$, $s\in\prod_{n<|d|} b(n)$ and $\varphi\in\prod_{n<\omega}\mathcal{P}(b(n))$ such that some $k<\omega$ satisfies:
					\begin{equation}
						\dfrac{|\varphi(n)|}{b(n)}\leq\exp\left(-\dfrac{n}{2^k}\right)\text{ for all }n\geq|d|.
					\end{equation} 
					\item $(d^\prime,s^\prime,\varphi^\prime)\leq(d,s,\varphi)$ if $d^\prime\supseteq d$, $s^\prime\supseteq s, \varphi^\prime(n)\supseteq\varphi(n)$ for $n<\omega$ and:
					\begin{equation}
						\text{for all }n\in(d^\prime)^{-1}(\{1\})\setminus d^{-1}(\{1\}), s^\prime(n)\notin\varphi(n).
					\end{equation}
				\end{enumerate}
				
				\item Put $Q_k\coloneq\left\{(d,s,\varphi)\in\lebb:\dfrac{|\varphi(n)|}{b(n)}\leq\exp\left(-\dfrac{n}{2^k}\right)\text{ for }n\geq|d|\right\}$ for $k<\omega$ (thus $\p=\bigcup_{k<\omega}Q_k$).
				Note $Q_k\subseteq Q_{k^\prime}$ for $k<k^\prime$.
				\item In the generic extension $V[G]$, define $D_G\coloneq\bigcup_{(d,s,\varphi)\in G}\din$ and  $y_G\coloneq\bigcup_{(d,s,\varphi)\in G}s$.
			\end{enumerate}

		\end{dfn}

		We introduce some useful notation:
		\begin{dfn}
			
			\begin{itemize}
				\item For $N<\omega$, let $\zero_N $ denote the sequences of length $N$ whose values are all $0$. 
				Namely, $\zero_N \coloneq N\times \{0\}$. 
				\item For $d,d^\prime\in\sq$ with $d^\prime\supseteq d$, $\New(d^\prime,d)\coloneq\newd$. 
			\end{itemize}
			
		\end{dfn}
		\begin{lem}
			\label{lem_LE_extend}
			Let $k<\omega$ and $(d,s,\varphi)\in Q_k$. Then, for any $m<\omega$, there is an extension $(d^\prime,s^\prime,\varphi)\leq(d,s,\varphi)$ such that $d^\prime(n)=1$ for some $n> m$.
			(Note that $(d^\prime,s^\prime,\varphi)\in Q_k$ here).
			
			In particular, $D_G$ is (forced to be) infinite.
		\end{lem}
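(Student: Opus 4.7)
The plan is straightforward: to extend $(d,s,\varphi)\in Q_k$ so as to plant a new $1$-position in $d$ beyond $m$, we just need to pick a fresh length $n^*\ge m$ and a value $s'(n^*)\in b(n^*)\setminus\varphi(n^*)$; the third coordinate $\varphi$ is left unchanged, so staying inside $Q_k$ is automatic.

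More precisely, first I would choose $n^* \ge \max(m,|d|,1)$. The crucial observation is that for any $n\ge 1$ we have
\begin{equation*}
\exp\!\left(-\dfrac{n}{2^k}\right)<1,
\end{equation*}
so the defining inequality of $Q_k$ forces $|\varphi(n^*)|<b(n^*)$, and hence we may pick some $a\in b(n^*)\setminus\varphi(n^*)$. Then I would define $d'\in\sq$ of length $n^*+1$ by setting $d'\on|d|=d$, $d'(\ell)=0$ for $|d|\le \ell<n^*$, and $d'(n^*)=1$; correspondingly extend $s$ to $s'\in\prod_{n<|d'|}b(n)$ by picking arbitrary values for $|d|\le \ell<n^*$ and putting $s'(n^*)=a$. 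Then $\New(d',d)=\{n^*\}$ and $s'(n^*)=a\notin\varphi(n^*)$, so the order relation is satisfied; and the $Q_k$-inequality on $\varphi$ still holds for all $n\ge|d'|\ge|d|$, so $(d',s',\varphi)\in Q_k$ as required.

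For the ``in particular'' clause, I would run the standard density argument: for each $m<\omega$, the set
\begin{equation*}
E_m\coloneq\{(d,s,\varphi)\in\lebb:\exists n\ge m~d(n)=1\}
\end{equation*}
is dense in $\lebb$ by the first part (applied inside whatever $Q_k$ contains the given condition), so any generic filter $G$ meets every $E_m$, which forces $D_G\cap[m,\omega)\ne\emptyset$ for all $m$, i.e.\ $D_G$ is infinite.

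There is no substantive obstacle here; the only thing to check is that enlarging $d$ (with $\varphi$ fixed) does not violate the $Q_k$-inequality, and this is immediate because the constraint is indexed by $n\ge|d|$ and is monotone when $|d|$ grows. The content of the lemma is essentially the quantitative fact $|\varphi(n)|<b(n)$ for $n\ge 1$ built into the definition of $Q_k$, which guarantees a legal choice of $s'(n^*)$.
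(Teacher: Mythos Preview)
Your proof is correct and follows essentially the same approach as the paper: pick a new position beyond $m$, use the inequality $\exp(-n/2^k)<1$ to find a legal value avoiding $\varphi$, and leave $\varphi$ unchanged. The only cosmetic difference is that the paper extends by $d^\frown\one_N$ (all new positions are $1$, so it chooses an $a_n\in b(n)\setminus\varphi(n)$ for every new $n$), whereas you extend with $0$'s except for a single $1$ at $n^*$; both are trivially valid.
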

		\begin{proof}
			Let $d^\prime\coloneq d^\frown(\zero_N)^\frown1$ where $N$ is so large that $n\coloneqq|d^\prime|-1>m$.
			Take some extension $s^\prime\in\prod_{i<|d^\prime|} b(i)$ of $s$ such that $s^\prime(n)\in b(n)\setminus\varphi(n)$, which is possible since $\dfrac{|\varphi(n)|}{b(n)}\leq\exp\left(-\dfrac{n}{2^k}\right)<1$ (note $n>m\geq0$).
			Then, $(d^\prime,s^\prime,\varphi)$ is as required.
		\end{proof}
		

		\begin{lem}
			\label{lem_LE_increases_bbLE}
			$(D_G,y_G)$ is a generic $\lebf$-dominating real and hence $\lebb$ increases $\bb(\lebf)$ (by iteration).
		\end{lem}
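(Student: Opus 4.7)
The plan is to establish that $(D_G, y_G)$ $\lebf$-dominates every ground-model element of $\prod b$ via a density argument, and then invoke standard iteration theory for the second clause. Concretely, for each $x \in \prod b \cap V$, I would show that the set
$$D_x := \{(d, s, \varphi) \in \lebb : x(n) \in \varphi(n) \text{ for all } n \geq |d|\}$$
is dense in $\lebb$. Once this is established, picking any $(d, s, \varphi) \in D_x \cap G$ forces every further extension $(d', s', \varphi')$ to satisfy $s'(n) \notin \varphi(n) \ni x(n)$ on $\New(d', d)$; hence $y_G(n) \neq x(n)$ for all $n \in D_G$ with $n \geq |d|$, yielding $x \neq^* (D_G, y_G)$.

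To prove density of $D_x$, given $(d, s, \varphi) \in Q_k$ I first pad it to $(d^*, s^*, \varphi)$ where $d^* = d^\frown \zero_N$ with $|d^*| \geq 2^{k+1}$ and $s^*$ is any extension of $s$; since $\varphi$ is unchanged and $\New(d^*, d) = \emptyset$, this trivially stays in $Q_k$. Then I define $\varphi'(n) := \varphi(n) \cup \{x(n)\}$ for $n \geq |d^*|$ and $\varphi'(n) := \varphi(n)$ otherwise. The triple $(d^*, s^*, \varphi')$ is in $D_x$ by construction and clearly extends $(d, s, \varphi)$, so the only substantive check is that it is a genuine condition of $\lebb$ --- specifically, that it lies in $Q_{k+1}$.

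This last verification is the technical core, and the only place where the specific weight in the definition of $Q_k$ is used. For $n \geq |d^*| \geq 2^{k+1}$, using $b(n) \geq \exp(n)$, one computes
$$\frac{|\varphi'(n)|}{b(n)} \leq \exp\!\left(-\frac{n}{2^k}\right) + \frac{1}{b(n)} \leq 2\exp\!\left(-\frac{n}{2^k}\right) = \exp\!\left(1 - \frac{n}{2^k}\right) \leq \exp\!\left(-\frac{n}{2^{k+1}}\right),$$
where the last inequality reduces to $n \geq 2^{k+1}$ --- exactly what the padding step arranged. In other words, the doubly exponential weight $\exp(-n/2^k)$ is calibrated so that adjoining one new element to each $\varphi(n)$ costs precisely one increment of the parameter $k$, and this is what makes the family $\{Q_k\}_{k<\omega}$ stable under the ``forbid one extra value'' operation; this arithmetic is the only nontrivial obstacle in the argument.

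For the parenthetical claim that iterating $\lebb$ increases $\bb(\lebf)$, the argument is standard: in a finite support iteration of length $\lambda$ (regular uncountable) in which each stage forces an $\lebf$-dominating real, any $F \subseteq \prod b$ of size $<\lambda$ in the final extension lies in some intermediate model $V_\alpha$ by a cofinality argument, and the generic pair $(D_\alpha, y_\alpha)$ added at stage $\alpha$ then $\lebf$-dominates all of $F$; hence no such $F$ is $\lebf$-unbounded, which forces $\bb(\lebf) \geq \lambda$.
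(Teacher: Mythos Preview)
Your proposal is correct and follows essentially the same density argument as the paper: both show the set $\{(d,s,\varphi): x(n)\in\varphi(n)\text{ for }n\geq|d|\}$ is dense by first enlarging $|d|$ to at least $2^{k+1}$ and then adjoining $x(n)$ to each $\varphi(n)$, verifying the result lands in $Q_{k+1}$ via the same arithmetic. The only cosmetic difference is that you pad with $\zero_N$ whereas the paper invokes Lemma~\ref{lem_LE_extend} (which pads with $\one_N$ and chosen values of $s'$); your zero-padding is in fact slightly more direct here, and the paper relies on that earlier lemma also to have already established that $D_G$ is infinite---a point you use implicitly but do not state.
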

		\begin{proof}
			Since $D_G$ is already forced to be infinite, we show that for $x\in\prod b$,  $E\coloneq\{(d,s,\varphi):x(n)\in\varphi(n)\text{ for }n\geq |d|\}$ is dense.
			Given $(d,s,\varphi)\in Q_k$ for some $k$,
			we may assume that $2\leq\exp\left(\dfrac{|d|}{2^{k+1}}\right)$ by Lemma \ref{lem_LE_extend}.
			Define a function $\psi$ on $\omega$ by $\psi(n)\coloneq\varphi(n)\cup\{x(n)\}$.
			For $n\geq|d|$, we have
			$\dfrac{|\psi(n)|}{b(n)}\leq\dfrac{|\varphi(n)|+1}{b(n)}\leq\exp\left(-\dfrac{n}{2^k}\right)+\exp\left(-n\right)\leq 2\exp\left(-\dfrac{n}{2^k}\right)\leq\exp\left(\dfrac{|d|}{2^{k+1}}\right)\exp\left(-\dfrac{n}{2^k}\right)\leq\exp\left(-\dfrac{n}{2^{k+1}}\right)$ since $\exp(n)\leq b(n)$, so $(d,s,\psi)$ is a valid condition and in $E$.
		\end{proof}
		
		The following lemma is easy to prove, while it plays a key role in the upcoming forcing construction as mentioned in Remark \ref{rem_key_centered}.
		\begin{lem}
			\label{lem_LE_sigma_centered}
			For any $d\in\sq$, $s\in\prod_{n<|d|} b(n)$ and $k<\omega$,  $Q=Q_{d,s,k}\coloneq\{(d^\prime,s^\prime,\varphi)\in Q_k:d^\prime=d, s^\prime=s\}$ is centered.
			In particular, $\lebb$ is $\sigma$-centered.
		\end{lem}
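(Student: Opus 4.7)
The plan is to show that $Q\coloneq Q_{d,s,k}$ is centered by constructing, for any finitely many conditions $(d,s,\varphi_i)\in Q$ with $i<N$, a common extension of the form $(d',s',\varphi^*)$, where $\varphi^*(n)\coloneq\bigcup_{i<N}\varphi_i(n)$ is the pointwise union and $(d',s')$ extends $(d,s)$ by appending zeros to $d$ (and arbitrary values to $s$). The ``in particular'' clause then follows at once, because the index set $\{(d,s,k):d\in\sq,\ s\in\prod_{n<|d|}b(n),\ k<\omega\}$ is countable and every $(d,s,\varphi)\in\lebb$ belongs to $Q_{d,s,k}$ for any $k$ witnessing its validity condition.

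The main obstacle is verifying that $(d',s',\varphi^*)$ is itself a valid $\lebb$-condition. The union bound yields $|\varphi^*(n)|/b(n)\le N\cdot\exp(-n/2^k)$ for all $n\ge|d|$, and comparing this with the target $\exp(-n/2^{k+1})$ reduces to the requirement $n\ge 2^{k+1}\log_2 N$. This can fail for small $n$ (already at $n=1$ with $k=0$ and $N=2$, the ratio may equal $1>\exp(-1/2)$), so simply taking $(d,s,\varphi^*)$ as the common extension is insufficient. The remedy is to lengthen the stem: pick $|d'|\ge\max\{|d|,\ \lceil 2^{k+1}\log_2 N\rceil\}$, set $d'\coloneq d^\frown\zero_{|d'|-|d|}$, and take $s'$ to be any extension of $s$ to length $|d'|$. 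Then for every $n\ge|d'|$ the desired estimate $|\varphi^*(n)|/b(n)\le\exp(-n/2^{k+1})$ holds, witnessing $(d',s',\varphi^*)\in Q_{k+1}\subseteq\lebb$.

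It remains to verify $(d',s',\varphi^*)\le(d,s,\varphi_i)$ for each $i<N$. The inclusions $d'\supseteq d$, $s'\supseteq s$, and $\varphi^*(n)\supseteq\varphi_i(n)$ hold by construction, and the side clause ``$s'(n)\notin\varphi_i(n)$ for $n\in\New(d',d)$'' is vacuous: since we appended only zeros, $(d')^{-1}(\{1\})=d^{-1}(\{1\})$ and hence $\New(d',d)=\emptyset$. The only substantive content of the argument is the exponential bookkeeping in the validity check; once the stem is lengthened past the threshold, everything else reduces to direct inspection of the definitions.
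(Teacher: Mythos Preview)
Your proof is correct and follows essentially the same route as the paper: lengthen the stem by appending zeros past the threshold $|d'|\ge 2^{k+1}\log_2 N$ (equivalently, the paper's condition $N\le\exp(|d'|/2^{k+1})$), take the pointwise union $\varphi^*$, verify the resulting condition lies in $Q_{k+1}$, and observe that $\New(d',d)=\emptyset$ makes the side clause in the extension relation vacuous. Your explicit counterexample showing why the stem must be lengthened is a nice touch that the paper leaves implicit.
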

		\begin{proof}
			Assume $l<\omega$ and $\langle r_i=(d,s,\varphi_i)\rangle_{i<l}\in Q^l$ are given. Let $d^\prime\coloneq d^\frown\zero_N$ where $N$ is so large that $l\leq\exp\left(\dfrac{|d^\prime|}{2^{k+1}}\right)$ holds and let $s^\prime\in\prod_{n<|d^\prime|} b(n)$ be any extension of $s$. Define a function $\psi$ on $\omega$ by $\psi(n)\coloneq\bigcup_{i<l}\varphi_i(n)$ for $n<\omega$.
			For $n\geq|d^\prime|$, 
			$\dfrac{|\psi(n)|}{b(n)}\leq l\exp\left(-\dfrac{n}{2^k}\right)\leq \exp\left(\dfrac{|d^\prime|}{2^{k+1}}\right)\exp\left(-\dfrac{n}{2^k}\right)\leq\exp\left(-\dfrac{n}{2^{k+1}}\right)$, so $(d^\prime,s^\prime,\psi)$ is a valid condition and extends all $r_i$ for $i<l$, since $\New(d^\prime,d)=\emptyset$.
		\end{proof}
		
		\begin{rem}
			This is the point where the first coordinate $d$ is essentially needed. In fact, without this parameter $d$, the corresponding forcing notion will also satisfy the three other required properties: it increases $\nonm$, it has UF-limits and it has UFI-limits, but will not be $\sigma$-centered. However, once we add this $d$, the $\sigma$-centeredness is satisfied, without any loss of the other properties.
		\end{rem}
		
		$\lebb$ has (the standard) closed-UF-limits:
		\begin{lem}
			\label{lem_LE_cUF}
			Each $Q_{d,s,k}$ is closed-$D$-limit-linked for any ultrafilter $D$.
			In particular, 
			$\lebb$ is $\sigma$-$\left(\Lambda(\mathrm{centered})\cap\Lambda^\mathrm{lim}_\mathrm{cuf}\right)$-lim-linked. 
		\end{lem}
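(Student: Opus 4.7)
\textbf{Proof plan for Lemma \ref{lem_LE_cUF}.} I will exhibit an explicit limit function $\lim^D\colon Q_{d,s,k}^\omega\to Q_{d,s,k}$ and then invoke Lemma \ref{lem_chara_UF_linked} to reduce the $D$-lim-linked requirement to a purely combinatorial compatibility statement about finitely many extensions. The construction mirrors what one does for many ``measure-small'' style forcings: average the third coordinates through the ultrafilter.

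Fix an ultrafilter $D$ and a sequence $\bar{r}=\langle r_m=(d,s,\varphi_m):m<\omega\rangle\in Q_{d,s,k}^\omega$. Since $b(n)$ is finite, so is $\mathcal{P}(b(n))$, hence for each $n<\omega$ there is a unique $A_n\subseteq b(n)$ with $\{m<\omega:\varphi_m(n)=A_n\}\in D$. Define $\varphi^\infty(n)\coloneq A_n$ and set $\lim^D(\bar{r})\coloneq(d,s,\varphi^\infty)$. Picking any $m$ in the $D$-large set witnesses $|A_n|/b(n)\leq \exp(-n/2^k)$ for $n\geq|d|$, so indeed $\lim^D(\bar{r})\in Q_{d,s,k}$; in particular the range is ``closed'' in $Q_{d,s,k}$.

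To apply Lemma \ref{lem_chara_UF_linked}, fix $N<\omega$, sequences $\bar{r}^j=\langle(d,s,\varphi_m^j)\rangle_{m<\omega}\in Q_{d,s,k}^\omega$ for $j<N$, and $r=(d^*,s^*,\psi)\leq\lim^D\bar{r}^j$ for every $j<N$; write $\varphi^{\infty,j}$ for the third coordinate of $\lim^D\bar{r}^j$ and $E\coloneq (d^*)^{-1}(\{1\})\setminus d^{-1}(\{1\})$, a finite set. For each pair $(j,n')\in N\times E$ the set $\{m:\varphi_m^j(n')=\varphi^{\infty,j}(n')\}$ lies in $D$, so their finite intersection $M$ also lies in $D$. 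For $m\in M$ I build a common extension $r''=(d^{**},s^{**},\psi^{**})$ of $r$ and of every $r_m^j$ by taking $\psi^{**}(n')\coloneq\psi(n')\cup\bigcup_{j<N}\varphi_m^j(n')$, putting $d^{**}\coloneq d^*{}^\frown\zero_L$ for suitably large $L$, and extending $s^*$ to any $s^{**}\in\prod_{n<|d^{**}|}b(n)$. The new ones in $d^{**}$ are only zeros, so the extension clauses of $\leq$ reduce to the constraints already guaranteed on $E$: $s^*(n')\notin\varphi^{\infty,j}(n')\supseteq\varphi_m^j(n')$ for $m\in M$ and $n'\in E$.

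The main technical point, and really the only obstacle, is the measure bookkeeping on $\psi^{**}$. If $r\in Q_{k'}$, then for $n'\geq|d^*|$,
\[
\frac{|\psi^{**}(n')|}{b(n')}\leq \exp\!\left(-\tfrac{n'}{2^{k'}}\right)+N\exp\!\left(-\tfrac{n'}{2^{k}}\right)\leq (N+1)\exp\!\left(-\tfrac{n'}{2^{\tilde k-1}}\right),
\]
with $\tilde k\coloneq \max(k,k')+1$; this is bounded by $\exp(-n'/2^{\tilde k})$ once $n'\geq 2^{\tilde k}\log_2(N+1)$, so choosing $L$ with $|d^{**}|$ exceeding this threshold places $r''$ in $Q_{\tilde k}\subseteq \lebb$. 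Hence condition $(\star)_N$ of Lemma \ref{lem_chara_UF_linked} holds for all $N$, giving the first sentence. The ``in particular'' is then immediate: $\lebb=\bigcup_{d,s,k}Q_{d,s,k}$ is a countable union, and each $Q_{d,s,k}$ is centered by Lemma \ref{lem_LE_sigma_centered} and c-uf-lim-linked by the previous paragraph, so $\lebb$ is $\sigma$-$(\Lambda(\mathrm{centered})\cap\Lambda^\mathrm{lim}_\mathrm{cuf})$-lim-linked.
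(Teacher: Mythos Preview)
Your proof is correct and follows essentially the same route as the paper's: define the limit pointwise via the ultrafilter on the finitely many possible values of $\varphi_m(n)$, then verify $(\star)_N$ from Lemma \ref{lem_chara_UF_linked} by intersecting the finitely many $D$-large sets indexed by $N\times E$ and amalgamating the $\varphi$-parts. The only cosmetic difference is that the paper first extends $r$ so that $N+1\leq\exp(|d'|/2^{k'+1})$ and then takes $r'=(d',s',\psi)$ with the same $d',s'$, whereas you postpone the extension and pad $d^*$ with zeros at the end; both work. One trivial slip: you write $\varphi^{\infty,j}(n')\supseteq\varphi_m^j(n')$, but by your definition of $M$ this is actually an equality---the argument is unaffected.
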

		
		\begin{proof}
			Put $Q\coloneq Q_{d,s,k}$. 
			Given $\bar{q}=\langle q_m=(d,s,\varphi_m)\rangle_{m<\omega}\in Q^\omega$,
			define $q^\infty=\lim^D\bar{q}=(d,s,\varphi_\infty)\in\lebb$ by:
			\begin{equation}
				\label{eq_def_of_uflimit_of_LE}
				A=\varphi_\infty(n):\Leftrightarrow \{m<\omega:A=\varphi_m(n)\}\in D,
			\end{equation}
			for $n<\omega$ and $A\subseteq b(n)$, which is possible since $\mathcal{P}(b(n))$ is finite.
			It is easy to see that $q^\infty\in Q_k$ and hence in $Q$, since each $\varphi_\infty(n)$ is equal to some $\varphi_m(n)$. Thus, the closedness of the limit function is proved.
			
			To show $(\star)_N$ in Lemma \ref{lem_chara_UF_linked} for $N\geq 1$, assume that $\bar{q}^j=\langle q^j_m=(d,s,\varphi_m^j)\rangle_{m<\omega}\in Q^\omega$ for $j<N$ and $r=(d^\prime,s^\prime,\varphi^\prime)\leq\lim^D\bar{q}^j=(d,s,\varphi_\infty^j)$ for all $j<N$.
			Let $k^\prime<\omega$ witness $r\in Q_{k^\prime}$ and we may assume that $k^\prime\geq k$ and $N+1\leq\exp\left(\dfrac{|d^\prime|}{2^{k^\prime+1}}\right)$ by extending.
			For $j<N$ and $n\in\New(d^\prime,d)$, let $X_{j,n}\coloneq\{m<\omega:\varphi^j_m(n)=\varphi_\infty^j(n)\}\in D$ and $X\coloneq\bigcap\{X_{j,n}:j<N, n\in\New(d^\prime,d)\}\in D$. Let $m\in X$ and we show $r$, $\{q^j_m:j<N\}$ have a common extension. 
			Define a function $\psi$ on $\omega$ by $\psi(n)\coloneq\varphi^\prime(n)\cup\bigcup_{i<N}\varphi_m^i(n)$ for $n<\omega$ and let $r^\prime\coloneq(d^\prime,s^\prime,\psi)$.
			For $n\geq|d^\prime|$, 
			$\dfrac{|\psi(n)|}{b(n)}\leq(N+1)\exp\left(-\dfrac{n}{2^{k^\prime}}\right)\leq \exp\left(\dfrac{|d^\prime|}{2^{k^\prime+1}}\right)\exp\left(-\dfrac{n}{2^{k^\prime}}\right)\leq\exp\left(-\dfrac{n}{2^{k^\prime+1}}\right)$, so $r^\prime=(d^\prime,s^\prime,\psi)$ is a valid condition and obviously extends $r=(d^\prime,s^\prime,\varphi^\prime)$. 
			For $j<N$, since $r=(d^\prime,s^\prime,\varphi^\prime)\leq(d,s,\varphi_\infty^j)=\lim^D\bar{q}^j$, $s^\prime(n)\notin \varphi_\infty^j(n)=\varphi_m^j(n)$ for $n\in\New(d^\prime,d)$.
			Thus, $r^\prime$ also extends all $q^j_m$ for $j<N$.
		\end{proof}
		
		To show that $\lebb$ has UFI-limits, we use the notion of ``pseudo-fusion'' introduced in \cite{Mej24}. 
		\begin{dfn}(\cite[Lemma 3.12]{Mej24})
			Let $\p$ be a poset.
			
			\begin{enumerate}
				\item Let $0<L<\omega$, $\bar{p}\coloneq\langle p_l:l<L\rangle\in\p^L$ and $\varepsilon\in(0,1)_\mathbb{Q}$.
				A condition $q\in\p$ is an $\varepsilon$-pseudo-fusion of $\bar{p}$ if:
				\begin{equation*}
					q\Vdash \dfrac{|\{l<L:p_l\in\dot{G}\}|}{L}\geq1-\varepsilon.
				\end{equation*}
				\item  For $\varepsilon\in(0,1)_\mathbb{Q}$, $Q\subseteq\p$ is $\varepsilon$-pseudo-fusion-linked if for any $0<L<\omega$ and $\bar{p}\coloneq\langle p_l:l<L\rangle\in Q^L$, there is an $\varepsilon$-pseudo-fusion of $\bar{p}$.
			\end{enumerate}
			We often say $\p$ has pseudo-fusions if for any $\varepsilon\in(0,1)_\mathbb{Q}$, $\p$ has a dense subposet which is a countable union of $\varepsilon$-pseudo-fusion-linked components. 
		\end{dfn}

		We will show that $\lebb$ has pseudo-fusions:
		\begin{dfn}
			\label{dfn_Rdsk}
			We say $d\in\sq$ is long if $|d|\geq 2 \text{ and }\exp\left(-\dfrac{n}{2^k}\right)\leq\dfrac{1}{n^2(n+1)^2}\text{ holds for }n\geq|d|$.
			Note that for each $\varepsilon\in(0,1)_\mathbb{Q}$, the set:
			\[\bigcup\left\{Q_{d,s,k}:d\in\sq\text{ is long and }\frac{1}{|d|}\leq\varepsilon,s\in\prod_{n<|d|}b(n),k<\omega\right\}\]
			is dense in $\lebb$ since the inequality  $\exp\left(-\dfrac{n}{2^k}\right)\leq\dfrac{1}{n^2(n+1)^2}$ holds for all but finitely many $n$.
%

%
%
%
%
		\end{dfn}
		
		\begin{lem}
			\label{lem_LE_has_pseudo_fusions}
			For each long $d$, $Q_{d,s,k}$ is $\frac{1}{|d|}$-pseudo-fusion-linked and moreover the pseudo-fusion conditions are always in $Q_{d,s,k+1}$.
			In particular, $\lebb$ has pseudo-fusions.
		\end{lem}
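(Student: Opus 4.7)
Given $\bar{p}=\langle p_l:l<L\rangle$ with $p_l=(d,s,\varphi_l)\in R_{d,s,k}^{\varepsilon'}$, the plan is to construct the pseudo-fusion $q$ by thresholding on the popularity counts $c_a(n):=|\{l<L:a\in\varphi_l(n)\}|$. Setting $t_n:=L\exp(-n/2^{k+1})$, I define $\psi(n):=\{a\in b(n):c_a(n)>t_n\}$ for $n\geq|d|$ and $\psi(n):=\emptyset$ for $n<|d|$, then put $q:=(d,s,\psi)$. A direct double-counting argument shows $q\in Q_{d,s,k+1}$: from $\sum_a c_a(n)=\sum_l|\varphi_l(n)|\leq L\exp(-n/2^k)b(n)$ and the fact that every element of $\psi(n)$ contributes strictly more than $t_n$ to this sum, one gets $|\psi(n)|\cdot t_n<L\exp(-n/2^k)b(n)$, which rearranges (using $\exp(-n/2^k)=(\exp(-n/2^{k+1}))^2$) to $|\psi(n)|/b(n)<\exp(-n/2^{k+1})$.

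To verify $q$ is an $\varepsilon'$-pseudo-fusion, I fix any generic $G\ni q$. Using separativity of $\lebb$ together with genericity applied to the dense set of conditions that are either below $p_l$ or incompatible with $p_l$, one shows that $p_l\in G$ iff $y_G(n)\notin\varphi_l(n)$ for every $n\in D_G\cap[|d|,\infty)$: if $p_l\notin G$ then some $q'\in G$ is incompatible with $p_l$, and incompatibility of $q'\leq q$ with $p_l$ is witnessed precisely by some $n\in(d_{q'})^{-1}(\{1\})\setminus d^{-1}(\{1\})$ with $s_{q'}(n)\in\varphi_l(n)$. Combined with the fact that $q\in G$ forces $y_G(n)\notin\psi(n)$ for $n\in D_G\cap[|d|,\infty)$, whenever $p_l\notin G$ there is some $n$ in this range with $y_G(n)\in\varphi_l(n)\setminus\psi(n)$; setting $a:=y_G(n)$, the condition $a\notin\psi(n)$ means $c_a(n)\leq t_n$, so at most $t_n$ many $l$'s can be spoiled at $n$. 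Summing over such $n$ and applying the $R_{d,s,k}^{\varepsilon'}$-bound $\exp(-n/2^k)\leq 1/(n^2(n+1)^2)$ (which gives $\exp(-n/2^{k+1})\leq 1/(n(n+1))$ for $n\geq|d|$), one obtains
\[
|\{l<L:p_l\notin G\}|\leq\sum_{n\geq|d|}t_n\leq L\sum_{n\geq|d|}\frac{1}{n(n+1)}=\frac{L}{|d|}\leq\varepsilon'L
\]
by telescoping and $1/|d|\leq\varepsilon'$, so $q\Vdash|\{l<L:p_l\in\dot G\}|/L\geq1-\varepsilon'$ as required.

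The conceptually subtle step is the compatibility characterization of $p_l\in G$ in terms of the generic data $(D_G,y_G)$: one must see via separativity that the adversary's only way to force $p_l\notin\dot G$ is to aim $y_G(n)$ into $\varphi_l(n)\setminus\psi(n)$ at some $n\in D_G\cap[|d|,\infty)$. The popularity thresholding is then engineered precisely to cap the number of $l$'s that can be simultaneously spoiled by any such choice at $t_n=L\exp(-n/2^{k+1})$, and the quadratic decay guaranteed by the $R_{d,s,k}^{\varepsilon'}$-condition supplies the summable tail telescoping to $1/|d|\leq\varepsilon'$. The ``in particular'' clause then follows immediately, as $\bigcup_{d,s,k}R_{d,s,k}^{\varepsilon'}$ is a countable union of $\varepsilon'$-pseudo-fusion-linked sets that is dense in $\lebb$ by the remark after Definition~\ref{dfn_Rdsk}.
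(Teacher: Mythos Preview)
Your proof is correct and follows essentially the same approach as the paper's: define the pseudo-fusion by thresholding the popularity counts $c_a(n)$, bound $|\psi(n)|$ via a Markov/double-counting inequality to land in $Q_{d,s,k+1}$, and then control the number of excluded $p_l$'s by the telescoping sum $\sum_{n\geq|d|}1/(n(n+1))=1/|d|\leq\varepsilon'$. The only differences are cosmetic: the paper uses the threshold $L\sqrt{\varepsilon_n}$ with $\varepsilon_n=\max_l|\varphi_l(n)|/b(n)$ (invoking Lemma~\ref{lem_fusion_principle}) where you use the uniform $t_n=L\exp(-n/2^{k+1})$, and the paper verifies the pseudo-fusion property by a direct density argument (finding an explicit extension below $\geq(1-\varepsilon')L$ many $r_l$) where you pass through the generic filter and the characterization of $p_l\in G$ in terms of $(D_G,y_G)$---but these unwind to the same computation, since your compatibility analysis of $q'\perp p_l$ is exactly what drives the paper's construction of the common extension.
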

		

		To prove this lemma, we use the following combinatorial lemma:
		\begin{lem}
			\label{lem_fusion_principle}
			Suppose:
			\begin{enumerate}
				\item $0<M,L<\omega$, $0<\varepsilon\leq \delta<1$.
				\item For $l<L$, $A_l\subseteq M$ and $\dfrac{|A_l|}{M}\geq1-\varepsilon$.
				\item For $m<M$, $a_m\coloneq\{l<L:m\in A_l\}$ and $\a_m\coloneq\dfrac{|a_m|}{L}$.
				\item $X\coloneq\{m<M:\a_m\geq1-\delta\}$, $x\coloneq\dfrac{|X|}{M}$
			\end{enumerate}
			Then, $x\geq1-\dfrac{\varepsilon}{\delta}$.
		\end{lem}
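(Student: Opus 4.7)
The plan is to prove this via a standard double-counting / Markov-type argument, tallying the set of pairs $\{(l,m) : m \in A_l\}$ in two different ways and comparing.

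First I would count column-wise: by the hypothesis on each $A_l$,
\begin{equation*}
|\{(l,m) : m \in A_l\}| \;=\; \sum_{l<L} |A_l| \;\geq\; L M (1-\varepsilon).
\end{equation*}
Then I would count row-wise: for fixed $m$, the number of $l$ with $m \in A_l$ is exactly $|a_m| = L\alpha_m$, so
\begin{equation*}
|\{(l,m) : m \in A_l\}| \;=\; \sum_{m<M} L\alpha_m \;=\; L \sum_{m<M} \alpha_m.
\end{equation*}
Equating these two expressions and dividing by $L$ gives $\sum_{m<M} \alpha_m \geq M(1-\varepsilon)$, which rearranges to $\sum_{m<M}(1-\alpha_m) \leq M\varepsilon$.

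Next I would use the definition of $X$ to extract a lower bound on the left-hand sum in terms of $|M \setminus X|$. For $m \notin X$, we have $\alpha_m < 1-\delta$, hence $1 - \alpha_m > \delta$. If $M \setminus X = \emptyset$ then $x = 1 \geq 1 - \varepsilon/\delta$ and we are done; otherwise
\begin{equation*}
M\varepsilon \;\geq\; \sum_{m<M}(1-\alpha_m) \;\geq\; \sum_{m \notin X}(1-\alpha_m) \;>\; \delta\cdot |M\setminus X| \;=\; \delta M(1-x),
\end{equation*}
whence $1 - x < \varepsilon/\delta$, i.e. $x > 1 - \varepsilon/\delta$, which is more than required.

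There is no real obstacle here: the lemma is just a quantitative Markov inequality disguised as a combinatorial statement, and every step is elementary. The only thing worth being careful about is the boundary case where every $m$ already lies in $X$, which must be handled separately (trivially) since the strict inequality $1 - \alpha_m > \delta$ for $m \notin X$ gives an empty estimate there.
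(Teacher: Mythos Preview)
Your proof is correct. Both your argument and the paper's rest on the same double-counting identity $\sum_{m<M}|a_m|=\sum_{l<L}|A_l|\geq(1-\varepsilon)ML$, but you then apply a direct Markov-inequality step to the quantities $1-\alpha_m$: bound $\sum_m(1-\alpha_m)\leq M\varepsilon$ from above, and from below by $\delta\cdot|M\setminus X|$ using the strict inequality $1-\alpha_m>\delta$ for $m\notin X$. The paper instead splits $M$ into $X$ and $Y=M\setminus X$, introduces the averages $\alpha=\frac{1}{|X|}\sum_{m\in X}\alpha_m$ and $\beta=\frac{1}{|Y|}\sum_{m\in Y}\alpha_m$, derives $\alpha x+\beta(1-x)\geq1-\varepsilon$, and then solves algebraically for $x$ using $\alpha\leq1$ and $\beta<1-\delta$. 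Your route is shorter and avoids the extra bookkeeping of the two averages; the paper's route is slightly more explicit about where the bound is tight but gains nothing in strength. Your handling of the boundary case $M\setminus X=\emptyset$ is fine, and in fact the paper also treats the degenerate cases ($X=\emptyset$ or $Y=\emptyset$) separately before the main computation.
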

		\begin{proof}
			If $X=\emptyset$, then $1-\delta\geq\dfrac{\sum_{m<M}\a_m }{M}=\dfrac{\sum_{m<M}|a_m|}{ML}=\dfrac{\sum_{l\in L}|A_l|}{ML}\geq1-\varepsilon$, so $\varepsilon=\delta$ holds and we have $x=0\geq 1-\dfrac{\varepsilon}{\delta}=0$.
			Assume that $X$ and $Y\coloneq M\setminus X$ are not empty.
			Let $\a\coloneq\dfrac{\sum_{m\in X}\a_m}{|X|}$ and $\b\coloneq\dfrac{\sum_{m\in Y}\a_m}{|Y|}$ and note
			\begin{equation}
				\label{eq_a_delta_b}
				1\geq\a\geq1-\delta>\b\geq0.
			\end{equation}
			Since $\a|X|L+\b|Y|L=\sum_{m<M}|a_m|=\sum_{l\in L}|A_l|\geq(1-\varepsilon)ML$,
			we obtain
			$\a x+\b(1-x)\geq1-\varepsilon$, which implies $(\a-\b)x\geq1-\b-\varepsilon$. By \eqref{eq_a_delta_b}, $(1-\b)x\geq(\a-\b)x\geq1-\b-\varepsilon$ and hence we have (again by \eqref{eq_a_delta_b}): \[x\geq1-\dfrac{\varepsilon}{1-\b}\geq1-\dfrac{\varepsilon}{1-(1-\delta)}=1-\dfrac{\varepsilon}{\delta}.\] 
			
%
		\end{proof}

		
		\begin{proof}[Proof of Lemma \ref{lem_LE_has_pseudo_fusions}]
			Given $0<L<\omega$ and $\bar{r}=\langle r_l=(d,s,\varphi_l)\rangle_{l<L}\in (Q_{d,s,k})^L$, 
			we define $r^*=\pf(\bar{r})=(d,s,\varphi^*)$ 
			as follows:
			For each $n\geq|d|$, define $\varphi^*(n)$ by Lemma \ref{lem_fusion_principle} by putting $M\coloneq  b(n), A_l\coloneq b(n)\setminus\varphi_l(n), \varepsilon\coloneq\varepsilon_n\coloneq\displaystyle{\max_{l<L}}\dfrac{|\varphi_l(n)|}{b(n)}$, $\delta\coloneq\sqrt{\varepsilon_n}\geq\varepsilon_n$, $\varphi^*(n)\coloneq b(n)\setminus X$.
			Note that by definition, for $n\geq|d|$ we have:
			\begin{equation}
				\label{eq_varp_star}
				\varphi^*(n)=\left\{m<b(n):\dfrac{|\{l<L:m\notin\varphi_l(n)\}|}{L}<1-\sqrt{\varepsilon_n}\right\},\text{ and}
			\end{equation}
			\begin{equation}
				\label{eq_vare}
				\varepsilon_n\leq\exp\left(-\dfrac{n}{2^k}\right)\leq\dfrac{1}{n^2(n+1)^2}.
			\end{equation}
			First we show that $r^*=(d,s,\varphi^*)$ is a valid condition and in $Q_{k+1}$. By Lemma \ref{lem_fusion_principle} and \eqref{eq_vare}, for $n\geq|d|$, we have  $\dfrac{|\varphi^*(n)|}{b(n)}\leq\dfrac{\varepsilon_n}{\sqrt{\varepsilon_n}}=\sqrt{\varepsilon_n}\leq\sqrt{\exp\left(-\dfrac{n}{2^k}\right)}=\exp\left(-\dfrac{n}{2^{k+1}}\right)$, so $r^*=(d,s,\varphi^*)$ is a condition in $ Q_{k+1}$.
			
			Then, we show $r^*$ is a $\frac{1}{|d|}$-pseudo-fusion of $\bar{r}$, by proving that $D\coloneq\left\{r:\dfrac{|\{l<L:r\leq r_l\}|}{L}\geq1-\dfrac{1}{|d|}\right\}$ is dense below $r^*$. Let $r=(d^\prime,s^\prime,\varphi^\prime)\leq r^*$ be arbitrary and $k^\prime<\omega$ witness $r\in Q_{k^\prime}$. We may assume that $k^\prime\geq k$ and moreover $L+1\leq\exp\left(\dfrac{|d^\prime|}{2^{k^\prime+1}}\right)$ by extending. 
			For $n\in\New(d^\prime,d)$, let $S_n\coloneq\{l<L:s^\prime(n)\notin\varphi_l(n)\}$.
			For such $n$, since $(d^\prime,s^\prime,\varphi^\prime)=r\leq r^*=(d,s,\varphi^*)$, $s^\prime(n)\notin\varphi^*(n)$ holds, which implies $\dfrac{|\{l<L:s^\prime(n)\notin \varphi_l(n)\}|}{L}=\dfrac{|S_n|}{L}\geq1-\sqrt{\varepsilon_n}\geq 1-\dfrac{1}{n(n+1)}$ by \eqref{eq_varp_star} and \eqref{eq_vare}.
			Thus, $S\coloneq\displaystyle\bigcap_{n\in\New(d^\prime,d)} S_n$ satisfies by Lemma \ref{lem_elem_comb}:
			\begin{equation*}
				\dfrac{|S|}{L} \geq\displaystyle1-\sum_{n\in\New(d^\prime,d)}\dfrac{1}{n(n+1)} 
				\geq1-\sum_{n\geq|d|}\dfrac{1}{n(n+1)}
				=1-\dfrac{1}{|d|}.
			\end{equation*}
			Therefore, it suffices to show that some extension $r^\prime \leq r$ also extends all $r_l$ for $l\in S$, so particularly $r^\prime$ is in $D=\left\{r^{\prime\prime}:\dfrac{|\{l<L:r^{\prime\prime}\leq r_l\}|}{L}\geq1-\dfrac{1}{|d|}\right\}$.
			Define a function $\psi$ on $\omega$ by $\psi(n)\coloneq\varphi^\prime(n)\cup\bigcup_{i\in S}\varphi_i(n)$ for $n<\omega$.
			For $n\geq|d^\prime|$, we have 
			$\dfrac{|\psi(n)|}{b(n)}\leq(1+|S|)\exp\left(-\dfrac{n}{2^{k^\prime}}\right)\leq(1+L)\exp\left(-\dfrac{n}{2^{k^\prime}}\right)\leq \exp\left(\dfrac{|d^\prime|}{2^{k^\prime+1}}\right)\exp\left(-\dfrac{n}{2^{k^\prime}}\right)\leq\exp\left(-\dfrac{n}{2^{k^\prime+1}}\right)$, so $(d^\prime,s^\prime,\psi)$ is a valid condition and extends $r=(d^\prime,s^\prime,\varphi^\prime)$ and all $r_l$ for $l\in S=\bigcap_{n\in\New(d^\prime,d)} S_n$, since $n\in\New(d^\prime,d)$ and $l \in S_n$ imply $s^\prime(n)\notin\varphi_l(n)$. 
		\end{proof}


		By using both the standard UF-limits and pseudo-fusions, i.e., by first taking pseudo-fusions and then taking \textit{their} UF-limits, we obtain that $\lebb$ has $\ufi$-limits:
		
		\begin{thm}
			\label{thm_LE_has_UFIC_limits}
			For each long $d$, $Q_{d,s,k}$ is $(D,\frac{1}{|d|})$-lim-linked for any ultrafilter $D$. 
			As a consequence, $\lebb$ has $\ufic$-limits.

		\end{thm}
		\begin{proof}
			
			Given $\bar{q}=\langle q_l:l<\omega\rangle\in (Q_{d,s,k})^\omega$, 
			we define the $(D,\frac{1}{|d|})$-limit condition $q^\infty\coloneq\lim^{D,\frac{1}{|d|}}\bar{q}$ as follows:
			\begin{enumerate}
				\item First, for each $k<\omega$, take the pseudo-fusion $r_k$ of $\bar{q}_k\coloneq\langle q_l:l\in I_k\rangle$ as in Lemma \ref {lem_LE_has_pseudo_fusions}.
				\item Then, take the standard UF-limit $r^\infty$ of the pseudo-fusions $r_k$, i.e., $r^\infty\coloneq\lim^D\langle r_k :k<\omega \rangle$ and define $q^\infty\coloneq r^\infty$, which is possible since each $r_k$ is in $Q_{d,s,k+1}$ by Lemma \ref {lem_LE_has_pseudo_fusions}, so we can take their $D$-limit by Lemma \ref{lem_LE_cUF}.
			\end{enumerate}
			By Lemma \ref {lem_LE_has_pseudo_fusions}, each $r_k$ forces $\dns_k(\bar{q})=\dfrac{|\{l\in I_k:q_l\in \dot{G}\}|}{|I_k|}\geq 1-\dfrac{1}{|d|}$ and by Lemma \ref{lem_LE_cUF}, $q^\infty=r^\infty$ forces $\{k<\omega :r_k\in\dot{G}\}\in\dot{D}^\prime$ where $\dot{D}^\prime$ is the name of an ultrafilter induced by $D$ and $Q_{d,s,k+1}$.
			Thus, $q^\infty$ forces $\left\{k<\omega:\dns_k(\bar{q})\geq 1-\dfrac{1}{|d|}\right\}\in\dot{D}^\prime$.
			
			The latter statement follows from the fact that for any $\varepsilon\in(0,1)_\q$, $\bigcup\{Q_{d,s,k}:d\text{ is long and }\frac{1}{|d|}\leq\varepsilon, s\in\prod_{n<|d|}b(n), k<\omega\}\subseteq\lebb$ is dense and each component $Q_{d,s,k}$ is both centered by Lemma \ref{lem_LE_sigma_centered} and $(D,\varepsilon)$-lim-linked for any ultrafilter $D$ since $\frac{1}{|d|}\leq\varepsilon$.
		\end{proof}
		

		\section{Iteration}\label{sec_separation}

		%
		
		Now we are ready to prove the main theorem, \textit{Cicho\'n's maximum with $\none$ and $\cove$}. We only construct one model including the evasion number $\ee$ and its variants corresponding to Theorem \ref{teo_CM_e_nonE} in Section \ref{sec_intro}, since Theorem \ref{teo_CM_e_nonE} implies Theorem \ref{teo_CM_nonE}.
		We will follow the presentation in \cite[Section 4]{Yam25}, which stems from the original construction of Cicho\'n's maximum in \cite{GKS} and \cite{GKMS}. That is, we first separate the left side of the diagram by performing a fsi 
		and then the right side by the submodel method introduced in \cite{GKMS}.
		We omit some details at several points below since the way to perform the iteration is almost the same as \cite[Section 4]{Yam25}. 

		\begin{dfn}
			Put:
			\begin{itemize}
				\item $\br_0\coloneq\mathbb{C}$, the Cohen forcing.
				\item $\R_1\coloneq\n$ and $\br_1\coloneq\mathbb{A}$, the Amoeba forcing.
				\item $\R_2\coloneq C_\n^\bot$ and $\br_2\coloneq\mathbb{B}$, the random forcing.
				\item $\R_3\coloneq\mathbf{D}$ and $\br_3\coloneq\mathbb{D}$, the Hechler forcing.
				\item $\R_4\coloneq\mathbf{PR}$, $\R_4^*\coloneq\mathbf{BPR}$ and $\br_4\coloneq\mathbb{PR}$.
				\item $\R_5\coloneq C_\mathcal{E}$, $\R_5^g\coloneq \mathbf{PR}_g$ , and $\br_5^g\coloneq\prp_g$ for $g\in(\omega\setminus2)^\omega$.
				\item $\R_6\coloneq C_\m$, $\R_6^*\coloneq\mathbf{LE}$ and $\br_6\coloneq\lebb$.
			\end{itemize}
			$I\coloneq 7$ and $I^+\coloneq I\setminus \{0\}$ are the index sets.
		\end{dfn}
		
		$\br_\ind$ corresponds to the poset which increases $\bb(\R_\ind)$ for each $\ind\in I^+$.
		Also note that $\R_4^*\lq\R_4$, $\R_5\lq \R_5^g$ and $\R_6\lq\R_6^*$ for $g\in(\omega\setminus2)^\omega$.
		
		\begin{ass}
			\label{ass_card_arith}
			\begin{enumerate}
				\item $\lambda_1\leq\cdots\leq\lambda_6$ are uncountable regular cardinals and $\lambda_7\geq\lambda_6$ is a cardinal.
				\item $\lambda_3=\mu_3^+$, $\lambda_4=\mu_4^+$ and $\lambda_5=\mu_5^+$ are successor cardinals and $\mu_3$ is regular.
				\item \label{item_aleph1_inacc}$\kappa<\lambda_\ind$ implies $\kappa^{\aleph_0}<\lambda_\ind$ for all $\ind\in I^+$.
				\item \label{item_ca_3}
				$\lambda_7^{<\lambda_6}=\lambda_7$, hence $\lambda_7^{<\lambda_\ind}=\lambda_7$ for all $\ind\in I^+$.

			\end{enumerate}
		\end{ass}
		
		\begin{dfn}
			Put $\lambda\coloneq\lambda_7$, $S_0\coloneq\lambda$,  $\gamma\coloneq\lambda+\lambda$, the length of the iteration we shall perform. Fix some partition $S_1\cup\cdots\cup S_6=\gamma\setminus S_0$ such that  for each $\ind\in I^+$, $S_i$ is cofinal in $\gamma$.   
			For $\xi<\gamma$, let $\ind(\xi)$ denote the unique $\ind\in I$ such that $\xi\in S_\ind$. 
		\end{dfn}
		
		We additionally assume the following cardinal arithmetic to obtain complete sets of guardrails:
		\begin{ass}
			\label{ass_for_complete_guardrail_E}
			$\lambda_7\leq2^{\mu_3}$.
		\end{ass}
		
		By Lemma \ref{lem_complete} and Assumption \ref{ass_for_complete_guardrail_E}, we have the following (again note that since $|(0,1)_\q|=\aleph_0$, Lemma \ref{lem_complete} also holds for $\ufic$-iterations.):   
		
		\begin{lem}
			For $j=3,4,5$ and $\xi<\gamma$, let $\theta^j_{\xi}\coloneqq\mu_j$ if $\ind(\xi)\leq j$ and $\theta^j_{\xi}\coloneqq\omega$ if $\ind(\xi)> j$.
			Then, there exist complete sets $H$, $H^\prime$ and $H^{\prime\prime}$ of guardrails of length $\gamma=\lambda_7$ for $\lambda_3$-$\Lambda^\mathrm{lim}_\mathrm{uf}$-iteration of size $<\lambda_3$ with $\langle\theta_\xi^3:\xi<\gamma\rangle$, $\lambda_4$-$\Lambda^\mathrm{lim}_\mathrm{cuf}$-iteration of size $<\lambda_4$ with $\langle\theta_\xi^4:\xi<\gamma\rangle$, and $\lambda_5$-$\ufic$-iteration of size $<\lambda_5$ with $\langle\theta_\xi^5:\xi<\gamma\rangle$, respectively.  
		\end{lem}
		
		\begin{con}
			\label{con_P7}
			We shall construct a ccc finite support iteration $\pst^7_\mathrm{pre}$ of length $\gamma$ satisfying the following items:
			\begin{enumerate}
				\item \label{item_P7_1}
				$\pst^7_\mathrm{pre}$ is a $\lambda_3$-$\Lambda^\mathrm{lim}_\mathrm{uf}$-iteration and has $\Lambda^\mathrm{lim}_\mathrm{uf}$-limits on $H$ 
				with the following witnesses:
				\begin{itemize}
					\item $\langle\p_\xi^-:\xi<\gamma\rangle$, the complete subposets witnessing $\Lambda^\mathrm{lim}_\mathrm{uf}$-linkedness.
					\item $\bar{Q}=\langle\dot{Q}_{\xi,\zeta}:\zeta<\theta_\xi^3,\xi<\gamma\rangle$, the $\Lambda^\mathrm{lim}_\mathrm{uf}$-linked components.
					\item $\bar{D}=\langle \dot{D}^h_\xi:\xi\leq\gamma,~h\in H \rangle$, the ultrafilters.
					\item $S^-\coloneq S_0\cup S_1\cup S_2\cup S_3$, the trivial stages and $S^+\coloneq S_4\cup S_5\cup S_6$, the non-trivial stages.
				\end{itemize}
				\item \label{item_P7_2}
				$\pst^7_\mathrm{pre}$ is a $\lambda_4$-$\Lambda^\mathrm{lim}_\mathrm{cuf}$-iteration and has $\Lambda^\mathrm{lim}_\mathrm{cuf}$-limits on $H^\prime$ 
				with the following witnesses:
				\begin{itemize}
					\item $\langle\p_\xi^-:\xi<\gamma\rangle$, the same complete subposets as \eqref{item_P7_1}, which witness $\Lambda^\mathrm{lim}_\mathrm{cuf}$-linkedness as well.
					\item $\bar{R}=\langle\dot{R}_{\xi,\zeta}:\zeta<\theta_\xi^4,\xi<\gamma\rangle$, the $\Lambda^\mathrm{lim}_\mathrm{cuf}$-linked components.
					\item $\bar{E}=\langle \dot{E}^{h^\prime}_\xi:\xi\leq\gamma,~h^\prime\in H^\prime\rangle$, the ultrafilters.
					\item $T^-\coloneq S_0\cup S_1\cup S_2\cup S_3\cup S_4$, the trivial stages and $T^+\coloneq S_5\cup S_6$, the non-trivial stages.
				\end{itemize}
				\item \label{item_P7_3}
				$\pst^7_\mathrm{pre}$ is a $\lambda_5$-$\ufic$-iteration and has $\ufic$-limits on $H^{\prime\prime}$
				with the following witnesses:
				\begin{itemize}
					\item $\langle\p_\xi^-:\xi<\gamma\rangle$, the same complete subposets as \eqref{item_P7_1} and \eqref{item_P7_2}, which witness $\ufic$-linkedness as well.
					\item $\bar{S}=\langle\dot{S}^\varepsilon_{\xi,\zeta}:\varepsilon\in(0,1)_\mathbb{Q},\zeta<\theta_\xi^5,\xi<\gamma\rangle$, the $\ufic$-linked components.
					\item $\bar{F}=\langle \dot{F}^{h^{\prime\prime}}_\xi:\xi\leq\gamma,~h^{\prime\prime}\in H^{\prime\prime}\rangle$, the ultrafilters.
					\item $U^-\coloneq S_0\cup S_1\cup S_2\cup S_3\cup S_4\cup S_5$, the trivial stages and $U^+\coloneq S_6$, the non-trivial stages.
				\end{itemize}
				\item 
				\label{item_N_7}
				For each $\xi\in\gamma\setminus S_0$, $N_\xi\preccurlyeq H_\Theta$ is a submodel where $\Theta$ is a sufficiently large regular cardinal satisfying:
				\begin{enumerate}
					\item $|N_\xi|<\lambda_{\ind(\xi)}$.
					\item \label{item_sigma_closed_E}
					$N_\xi$ is $\sigma$-closed, i.e., $(N_\xi)^\omega\subseteq N_\xi$.
					
					\item 
					\label{item_N_e_E}For any $\ind\in I^+$, $\eta<\gamma$ and for any set of (nice names of) reals $A$ in $V^{\p_\eta}$ of size $<\lambda_\ind$,
					there is some $\xi\in S_\ind$ (above $\eta$) such that $A\subseteq N_\xi$.
					\item If $\ind(\xi)=4$, then $\{\dot{D}^h_\xi:h\in H \}\subseteq N_\xi$.
					\item If $\ind(\xi)=5$, then $\{\dot{D}^h_\xi:h\in H \},\{\dot{E}^{h^\prime}_\xi:h^\prime\in H^\prime \}\subseteq N_\xi$.
					\item If $\ind(\xi)=6$, then $\{\dot{D}^h_\xi:h\in H \},\{\dot{E}^{h^\prime}_\xi:h^\prime\in H^\prime \}, \{ \dot{F}^{h^{\prime\prime}}_\xi:h^{\prime\prime}\in H^{\prime\prime} \}\subseteq N_\xi$.
					
				\end{enumerate}
				
				\item For $\xi\in S_0$, $\p_\xi^-\coloneq\p_\xi$ and for $\xi\in \gamma\setminus S_0$, $\p_\xi^-\coloneq\p_\xi\cap N_\xi$. 
				
				\item For each $\xi<\gamma$,
				$\p^-_\xi\Vdash\qd_\xi\coloneq\br_{\ind(\xi)}$.
				($\br_5$ denotes $\br_5^g$ for some $g\in(\omega\setminus2)^\omega$ and $g$ runs through all $g\in(\omega\setminus 2)^\omega$ by bookkeeping.)
				\item $\bar{Q},\bar{R}$ and $\bar{S}$ are determined in the canonical way: at trivial stages, they consist of singletons and at non-trivial stages, they consist of $\omega$-many $\Lambda^\mathrm{lim}_\mathrm{uf}/\Lambda^\mathrm{lim}_\mathrm{cuf}/$$\ufic$-linked components, respectively (see Table \ref{table_iteration}).


			\end{enumerate}

		\end{con}
		
		\begin{table}[b]
			\caption{Information of each iterand of $\pst^7_\mathrm{pre}$.}\label{table_iteration} 
			\centering
			\begin{tabular}{cccccc|c}
				\hline
				$\ind(\xi)$ &  iterand  & size & $\mu$-UF-linked & $\mu$-cUF & $\mu$-UFI & $\sigma$-centered\\
				
				\hline\hline
				
				1   & $	\mathbb{A}$  & $<\lambda_1$ &  $<\lambda_1$ & $<\lambda_1$ & $<\lambda_1$\\
				2   & $	\mathbb{B}$  & $<\lambda_2$ &  $<\lambda_2$ & $<\lambda_2$ & $<\lambda_2$\\
				3   & $	\mathbb{D}$  & $<\lambda_3$ &  $<\lambda_3$ & $<\lambda_3$ & $<\lambda_3$ &($	\textcolor{black}{\checkmark}$)\\ 
				4   & $	\mathbb{PR}$  & $<\lambda_4$ &  $\omega$ & $<\lambda_4$ & $<\lambda_4$ & $	\textcolor{black}{\checkmark}$\\
				5   & $	\mathbb{PR}_g$  & $<\lambda_5$ &  $\omega$ & $\omega$ & $<\lambda_5$& $	\textcolor{black}{\checkmark}$\\
				6   & $	\mathbb{LE}$  & $<\lambda_6$ &  $\omega$ & $\omega$ & $\omega$& $	\textcolor{black}{\checkmark}$\\
				
				\hline 
			\end{tabular}
			
		\end{table}
		
		We explain why the construction is possible:
		
		\textit{Successor step.} If $\ind(\xi)\leq5$, we are in a similar case to \cite[Construction 4.7]{Yam25} 
		since the iterand is not the new forcing notion $\lebb$ and $\xi$ is a trivial stage for the $\ufic$-iteration, so we do not have to think about $\lebb$ or $\ufic$-limits. Thus, we may assume $\ind(\xi)=6$. Compared to \cite[Construction 4.7]{Yam25}, what we have to additionally check is whether we can extend ultrafilters $\bar{D},\bar{E},\bar{F}$ when the iterand is $\br_{\ind(\xi)}=\lebb$, which is possible since $\lebb$ is both $\sigma$-$\Lambda^\mathrm{lim}_\mathrm{cuf}$-linked and $\sigma$-$\ufic$-linked by Lemma \ref{lem_LE_cUF} and Theorem \ref{thm_LE_has_UFIC_limits}, and by Lemma \ref{lem_uf_const_succ} and \ref{lem_ufi_const_succ}, which guarantee to extend each ultrafilter at successor steps.

		\textit{Limit step.}
		Direct from Lemma \ref{lem_uf_const_lim} and Lemma \ref{lem_ufi_const_lim}, and and Theorem \ref{thm_LE_has_UFIC_limits}, which guarantee to extend each ultrafilter at limit steps (see also Table \ref{table_iteration}). Again note that here is the point where we \textit{crucially need $\sigma$-centeredness} as mentioned in Remark \ref{rem_key_centered}.

		\begin{thm}
			$\pst^7_\mathrm{pre}$ forces for each $\ind\in I^+$,
			$\R_\ind\cong_T C_{[\lambda_7]^{<\lambda_\ind}}\cong_T[\lambda_7]^{<\lambda_\ind}$, in particular, $\bb(\R_\ind)=\lambda_\ind$ and $\dd(\R_\ind)=2^{\aleph_0}=\lambda_7$ (see Figure \ref{fig_p7_pre_mid}, and the same also holds for $\R_4^*$, $\R_5^g$ for $g\in(\omega\setminus2)^\omega$ and $\R_6^*$).
			
		\end{thm}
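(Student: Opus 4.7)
The plan is to establish $\R_\ind \cong_T C_{[\lambda_7]^{<\lambda_\ind}}$ in $V^{\pst^7_\mathrm{pre}}$ for every $\ind \in I^+$; the remaining equivalence $C_{[\lambda_7]^{<\lambda_\ind}} \cong_T [\lambda_7]^{<\lambda_\ind}$ is immediate from Fact \ref{fac_suff_eq_CI_and_I} together with Fact \ref{fac_cap_V} (computing the final continuum as $\lambda_7$) and Assumption \ref{ass_card_arith}\eqref{item_ca_3}. The desired equalities $\bb(\R_\ind) = \lambda_\ind$ and $\dd(\R_\ind) = \lambda_7$ then follow from Fact \ref{Tukey order and b and d}.

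For the forward direction $C_{[\lambda_7]^{<\lambda_\ind}} \lq \R_\ind$, I will invoke the preservation theorem tailored to $\ind$. For $\ind \in \{1,2\}$ I use Fact \ref{fac_smallness_for_addn_and_covn_and_nonm}(1),(2), observing that every iterand of $\pst^7_\mathrm{pre}$ is $\sigma$-centered (for $\prp, \prp_g$ by Theorem \ref{thm_PR_is_sigma_centered_uf}, and for $\lebb$ by Lemma \ref{lem_LE_sigma_centered}), a random subalgebra, or of bounded name-size. For $\ind = 3,4,5$ I apply Theorem \ref{thm_uf_limit_keeps_b_small}, Theorem \ref{thm_cUF_keep_estar_small}, and Theorem \ref{thm_UFI_keeps_nonE_small} respectively, with $\kappa = \lambda_\ind$ and $\lambda = \lambda_7$; Construction \ref{con_P7} has been set up precisely so that Assumption \ref{ass_first_Cohen} (resp.\ Assumption \ref{ass_first_Cohen_UFI}) is satisfied, the first $\lambda_7$ Cohen stages furnishing the required Cohen block and $|H|, |H'|, |H''|$ being suitably small. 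For $\ind = 6$ I apply Fact \ref{fac_smallness_for_addn_and_covn_and_nonm}(3), since every iterand is named inside some $N_\xi$ of size $<\lambda_6$.

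For the reverse direction $\R_\ind \lq C_{[\lambda_7]^{<\lambda_\ind}}$, equivalently $\bb(\R_\ind) \geq \lambda_\ind$ by Fact \ref{fac_Tukey_order_equivalence_condition}(1), I will run the standard bookkeeping argument: given $F \subseteq X_{\R_\ind}$ with $|F| < \lambda_\ind$, ccc and Fact \ref{fac_cap_V} place $F$ in some $V^{\p_\eta}$; the bookkeeping clause inside Construction \ref{con_P7}\eqref{item_N_7} then supplies $\xi \in S_\ind$ with $\eta \leq \xi$ and $F \subseteq N_\xi$, and the $\br_\ind$-generic real introduced at stage $\xi$ meets every member of $F$ in the $\R_\ind$-sense. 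The additional claims for $\R_4^*, \R_5^g, \R_6^*$ follow by transferring through the Tukey connections $\R_4^* \lq \R_4$, $\R_5 \lq \R_5^g$, $\R_6 \lq \R_6^*$; the corresponding upper bounds on their $\bb$-numbers come either directly from the preservation theorems above (Theorem \ref{thm_cUF_keep_estar_small} already produces $C_{[\lambda_7]^{<\lambda_4}} \lq \mathbf{BPR}$) or by transfer through the same Tukey connections.

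The genuinely new point is that three limit technologies (UF-, cUF-, and UFI-limits) must operate simultaneously on a single iteration. All preservation invocations are routine once Construction \ref{con_P7} is in place; the delicate step, which I expect to be the main conceptual obstacle, is the limit-stage construction of UF- and cUF-ultrafilters at stages $\xi \in S_6$, where the new iterand is $\lebb$. This is exactly where the $\sigma$-centeredness of $\lebb$ (Lemma \ref{lem_LE_sigma_centered}) becomes indispensable, on top of its $\sigma$-$\ufic$-linkedness (Theorem \ref{thm_LE_has_UFIC_limits}); see Remark \ref{rem_key_centered}. Once these two properties of $\lebb$ are in hand, no further obstacle arises and the argument proceeds as in \cite[Section 4]{Yam24}.
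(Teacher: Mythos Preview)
Your proposal is correct and follows essentially the same approach as the paper: the paper's proof is even terser, listing only the six ``new'' verifications forced by the introduction of $\lebb$ and the $\ufic$-limit notion (namely Fact~\ref{fac_smallness_for_addn_and_covn_and_nonm} plus Lemma~\ref{lem_LE_sigma_centered} for $\ind=1,2$; Theorems~\ref{thm_uf_limit_keeps_b_small}, \ref{thm_cUF_keep_estar_small} plus Lemma~\ref{lem_LE_cUF} for $\ind=3,4$; Theorems~\ref{thm_UFI_keeps_nonE_small}, \ref{thm_LE_has_UFIC_limits} for $\ind=5$; and Lemma~\ref{lem_LE_increases_bbLE} for the reverse direction at $\ind=6$), deferring everything else to \cite[Construction~4.7, Section~4.4]{Yam24}. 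Your outline makes the bookkeeping for $\R_\ind \lq C_{[\lambda_7]^{<\lambda_\ind}}$ and the transfer to $\R_4^*,\R_5^g,\R_6^*$ more explicit than the paper does, but the ingredients and the logic are identical.
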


		\begin{figure}
			\centering
			\begin{tikzpicture}
				\tikzset{
					textnode/.style={text=black}, 
				}
				\tikzset{
					edge/.style={color=black, thin, opacity=0.4}, 
				}
				\newcommand{\w}{2.4}
				\newcommand{\h}{2.0}
				
				\node[textnode] (addN) at (0,  0) {$\addn$};
				\node (t1) [fill=lime, draw, text=black, circle,inner sep=1.0pt] at (-0.25*\w, 0.8*\h) {$\lambda_1$};
				
				\node[textnode] (covN) at (0,  \h*3) {$\covn$};
				\node (t2) [fill=lime, draw, text=black, circle,inner sep=1.0pt] at (0.15*\w, 3.3*\h) {$\lambda_2$};

				\node[textnode] (addM) at (\w,  0) {$\cdot$};
				\node[textnode] (b) at (\w,  1.3*\h) {$\bb$};
				\node (t3) [fill=lime, draw, text=black, circle,inner sep=1.0pt] at (0.68*\w, 1.2*\h) {$\lambda_3$};
				
				\node[textnode] (nonM) at (\w,  \h*3) {$\nonm$};
				\node (t6) [fill=lime, draw, text=black, circle,inner sep=1.0pt] at (1.35*\w, 3.3*\h) {$\lambda_6$};
				
				\node[textnode] (covM) at (\w*2,  0) {$\covm$};
				\node (t7) [fill=lime, draw, text=black, circle,inner sep=1.0pt] at (3.5*\w, 1.5*\h) {{\scalebox{3.0}{$\lambda_7$}}};
				
				\node[textnode] (d) at (\w*2,  1.7*\h) {$\dd$};
				\node[textnode] (cofM) at (\w*2,  \h*3) {$\cdot$};

				\node[textnode] (nonN) at (\w*3,  0) {$\nonn$};
				
				\node[textnode] (cofN) at (\w*3,  \h*3) {$\cofn$};
				
				\node[textnode] (aleph1) at (-\w,  0) {$\aleph_1$};
				\node[textnode] (c) at (\w*4,  \h*3) {$2^{\aleph_0}$};
				
				\node[textnode] (e) at (0.5*\w,  1.7*\h) {$\mathfrak{e}$}; 
				\node[textnode] (estar) at (\w,  1.7*\h) {$\ee^*$};
				\node (t4) [fill=lime, draw, text=black, circle,inner sep=1.0pt] at (0.15*\w, 1.7*\h) {$\lambda_4$};

				\node[textnode] (pr) at (2.5*\w,  1.3*\h) {$\mathfrak{pr}$}; 
				\node[textnode] (prstar) at (\w*2,  1.3*\h) {$\mathfrak{pr}^*$};
				
				\node[textnode] (eubd) at (\w*0.2,  \h*2.1) {$\ee_{ubd}$};
				\node[textnode] (nonE) at (\w*0.75,  \h*2.55) {$\non(\mathcal{E})$};
				\node (t5) [fill=lime, draw, text=black, circle,inner sep=1.0pt] at (0.5*\w, 2.1*\h) {$\lambda_5$};
				
				\node[textnode] (prubd) at (\w*2.8,  \h*0.9) {$\mathfrak{pr}_{ubd}$};
				\node[textnode] (covE) at (\w*2.25,  \h*0.45) {$\cov(\mathcal{E})$};

				\draw[->, edge] (addN) to (covN);
				\draw[->, edge] (addN) to (addM);
				\draw[->, edge] (covN) to (nonM);	
				\draw[->, edge] (addM) to (b);
				\draw[->, edge] (addM) to (covM);
				\draw[->, edge] (nonM) to (cofM);
				\draw[->, edge] (d) to (cofM);
				\draw[->, edge] (b) to (prstar);
				\draw[->, edge] (covM) to (nonN);
				\draw[->, edge] (cofM) to (cofN);
				\draw[->, edge] (nonN) to (cofN);
				\draw[->, edge] (aleph1) to (addN);
				\draw[->, edge] (cofN) to (c);
				
				\draw[->, edge] (e) to (covM);
				\draw[->, edge] (addN) to (e);
				
				\draw[->, edge] (covM) to (prstar);
				\draw[->, edge] (nonM) to (pr);
				\draw[->, edge] (pr) to (cofN);
				
				\draw[->, edge] (e) to (estar);
				\draw[->, edge] (b) to (estar);C
				\draw[->, edge] (estar) to (nonM);
				\draw[->, edge] (estar) to (d);
				\draw[->, edge] (e) to (eubd);
				
				\draw[->, edge] (prstar) to (d);
				\draw[->, edge] (prstar) to (pr);
				
				\draw[->, edge] (prstar) to (pr);
				\draw[->, edge] (prubd) to (pr);
				\
				
				\draw[->, edge] (eubd) to (nonE);
				\draw[->, edge] (addM) to (nonE);
				\draw[->, edge] (nonE) to (nonM);
				
				\draw[->, edge] (covE) to (prubd);
				\draw[->, edge] (covE) to (cofM);
				\draw[->, edge] (covM) to (covE);
				
				\draw[blue,thick] (-0.5*\w,1.5*\h)--(1.5*\w,1.5*\h);
				\draw[blue,thick] (1.5*\w,-0.5*\h)--(1.5*\w,3.5*\h);
				
				\draw[blue,thick] (-0.5*\w,-0.5*\h)--(-0.5*\w,3.5*\h);
				
				\draw[blue,thick] (0.5*\w,-0.5*\h)--(0.5*\w,1.5*\h);
				
				\draw[blue,thick] (-0.1*\w,1.9*\h)--(1.5*\w,1.9*\h);
				\draw[blue,thick] (-0.1*\w,2.7*\h)--(1.5*\w,2.7*\h);
				\draw[blue,thick] (-0.1*\w,1.5*\h)--(-0.1*\w,2.7*\h);
				\draw[blue,thick] (0.5*\w,2.7*\h)--(0.5*\w,3.5*\h);
				

				\draw[->, edge] (nonE) to (nonN);
				\draw[->, edge] (covN) to (covE);

			\end{tikzpicture}
			\caption{Constellation of $\p^7_\mathrm{pre}$ and $\p^7_\mathrm{mid}$.}\label{fig_p7_pre_mid}
		\end{figure}

		\begin{proof}
			Compared with $\p_\mathrm{pre}$ in \cite[Construction 4.7]{Yam25} (or rather the iteration in \cite[Section 4.4]{Yam25} where $\prp_g$ was used), what we have to additionally deal with concerning the new poset $\lebb$ and the new limit notion UFIC are the following:
			\begin{enumerate}
				\item Why $C_{[\lambda_7]^{<\lambda_1}}\lq\R_1$ holds: By Fact \ref{fac_smallness_for_addn_and_covn_and_nonm} and Lemma \ref{lem_LE_sigma_centered}.
				\item Why $C_{[\lambda_7]^{<\lambda_2}}\lq\R_2$ holds: By Fact \ref{fac_smallness_for_addn_and_covn_and_nonm} and Lemma \ref{lem_LE_sigma_centered}.
				\item Why $C_{[\lambda_7]^{<\lambda_3}}\lq\R_3$ holds: By Theorem \ref{thm_uf_limit_keeps_b_small} and Lemma \ref{lem_LE_cUF}.
				\item Why $C_{[\lambda_7]^{<\lambda_4}}\lq\R_4^*$ holds: By Theorem \ref{thm_cUF_keep_estar_small} and Lemma \ref{lem_LE_cUF}.
				\item Why $C_{[\lambda_7]^{<\lambda_5}}\lq\R_5$ holds: By Theorem \ref{thm_UFI_keeps_nonE_small}  and \ref{thm_LE_has_UFIC_limits}.
				\item Why $\R_6^*\lq C_{[\lambda_7]^{<\lambda_6}}$ holds: By 
				Lemma \ref{lem_LE_increases_bbLE}.
			\end{enumerate}
		\end{proof}
		
		As in \cite[Section 4.2]{Yam25}, now we abandon Assumption \ref{ass_for_complete_guardrail_E} and instead assume eventual GCH (which means GCH above some cardinal) to use the \textit{submodel method} to separate the right side, which was introduced in \cite{GKMS}. Note that we will finally need no cardinal arithmetic assumptions in Theorem \ref{thm_p7fin}, but before having the theorem we have to be able to separate the left side even under eventual GCH, to make the final construction argument work (see the proof of Theorem \ref{thm_p7fin}).
		\begin{thm}
			\label{thm_p7_GCH}
			Assume Assumption \ref{ass_card_arith} and for some cardinal $\kappa_0<\lambda_1$, $2^\kappa=\kappa^+$ holds for every $\kappa\geq\kappa_0$.
			Then, there exists a ccc poset $\pst_\mathrm{mid}^7$ which forces for each $\ind\in I^+$,
			$\R_\ind\cong_T C_{[\lambda_7]^{<\lambda_\ind}}\cong_T[\lambda_7]^{<\lambda_\ind}$, in particular, $\bb(\R_\ind)=\lambda_\ind$ and $\dd(\R_\ind)=2^{\aleph_0}=\lambda_7$ (see Figure \ref{fig_p7_pre_mid}, the same also holds for $\R_4^*$, $\R_5^g$ for $g\in(\omega\setminus2)^\omega$ and $\R_6^*$).
		\end{thm}
		\begin{proof}
			See \cite[Section 4.2]{Yam25}.
		\end{proof}
		
				\begin{figure}
			\centering
			\begin{tikzpicture}
				\tikzset{
					textnode/.style={text=black}, 
				}
				\tikzset{
					edge/.style={color=black, thin, opacity=0.4}, 
				}
				\newcommand{\w}{2.4}
				\newcommand{\h}{2.0}
				
				\node[textnode] (addN) at (0,  0) {$\addn$};
				\node (t1) [fill=lime, draw, text=black, circle,inner sep=1.0pt] at (-0.25*\w, 0.8*\h) {$\theta_1$};
				
				\node[textnode] (covN) at (0,  \h*3) {$\covn$};
				\node (t2) [fill=lime, draw, text=black, circle,inner sep=1.0pt] at (0.15*\w, 3.3*\h) {$\theta_2$};

				\node[textnode] (addM) at (\w,  0) {$\cdot$};
				\node[textnode] (b) at (\w,  1.3*\h) {$\bb$};
				\node (t3) [fill=lime, draw, text=black, circle,inner sep=1.0pt] at (0.68*\w, 1.2*\h) {$\theta_3$};
				
				\node[textnode] (nonM) at (\w,  \h*3) {$\nonm$};
				\node (t6) [fill=lime, draw, text=black, circle,inner sep=1.0pt] at (1.35*\w, 3.3*\h) {$\theta_6$};
				
				\node[textnode] (covM) at (\w*2,  0) {$\covm$};
				\node (t7) [fill=lime, draw, text=black, circle,inner sep=1.0pt] at (1.65*\w, -0.3*\h) {$\theta_7$};
				
				\node[textnode] (d) at (\w*2,  1.7*\h) {$\dd$};
				\node (t10) [fill=lime, draw, text=black, circle,inner sep=1.0pt] at (2.32*\w, 1.8*\h) {$\theta_{10}$};
				\node[textnode] (cofM) at (\w*2,  \h*3) {$\cdot$};

				\node[textnode] (nonN) at (\w*3,  0) {$\nonn$};
				\node (t11) [fill=lime, draw, text=black, circle,inner sep=1.0pt] at (2.85*\w, -0.3*\h) {$\theta_{11}$};
				
				\node[textnode] (cofN) at (\w*3,  \h*3) {$\cofn$};
				\node (t12) [fill=lime, draw, text=black, circle,inner sep=1.0pt] at (3.25*\w, 2.2*\h) {$\theta_{12}$};
				
				\node[textnode] (aleph1) at (-\w,  0) {$\aleph_1$};
				\node[textnode] (c) at (\w*4,  \h*3) {$2^{\aleph_0}$};
				\node (t10) [fill=lime, draw, text=black, circle,inner sep=1.0pt] at (3.67*\w, 3.4*\h) {$\theta_\cc$};
				
				\node[textnode] (e) at (0.5*\w,  1.7*\h) {$\mathfrak{e}$}; 
				\node[textnode] (estar) at (\w,  1.7*\h) {$\ee^*$};
				\node (t4) [fill=lime, draw, text=black, circle,inner sep=1.0pt] at (0.15*\w, 1.7*\h) {$\theta_4$};

				\node[textnode] (pr) at (2.5*\w,  1.3*\h) {$\mathfrak{pr}$}; 
				\node[textnode] (prstar) at (\w*2,  1.3*\h) {$\mathfrak{pr}^*$};
				\node (t9) [fill=lime, draw, text=black, circle,inner sep=1.0pt] at (2.85*\w, 1.3*\h) {$\theta_9$};
				
				\node[textnode] (eubd) at (\w*0.2,  \h*2.1) {$\ee_{ubd}$};
				\node[textnode] (nonE) at (\w*0.75,  \h*2.55) {$\non(\mathcal{E})$};
				\node (t5) [fill=lime, draw, text=black, circle,inner sep=1.0pt] at (0.5*\w, 2.1*\h) {$\theta_5$};
				
				\node[textnode] (prubd) at (\w*2.8,  \h*0.9) {$\mathfrak{pr}_{ubd}$};
				\node[textnode] (covE) at (\w*2.25,  \h*0.45) {$\cov(\mathcal{E})$};
				\node (t8) [fill=lime, draw, text=black, circle,inner sep=1.0pt] at (2.47*\w, 0.9*\h) {$\theta_8$};

				\draw[->, edge] (addN) to (covN);
				\draw[->, edge] (addN) to (addM);
				\draw[->, edge] (covN) to (nonM);	
				\draw[->, edge] (addM) to (b);
				\draw[->, edge] (addM) to (covM);
				\draw[->, edge] (nonM) to (cofM);
				\draw[->, edge] (d) to (cofM);
				\draw[->, edge] (b) to (prstar);
				\draw[->, edge] (covM) to (nonN);
				\draw[->, edge] (cofM) to (cofN);
				\draw[->, edge] (nonN) to (cofN);
				\draw[->, edge] (aleph1) to (addN);
				\draw[->, edge] (cofN) to (c);
				
				\draw[->, edge] (e) to (covM);
				\draw[->, edge] (addN) to (e);
				
				\draw[->, edge] (covM) to (prstar);
				\draw[->, edge] (nonM) to (pr);
				\draw[->, edge] (pr) to (cofN);
				
				\draw[->, edge] (e) to (estar);
				\draw[->, edge] (b) to (estar);C
				\draw[->, edge] (estar) to (nonM);
				\draw[->, edge] (estar) to (d);
				\draw[->, edge] (e) to (eubd);
				
				\draw[->, edge] (prstar) to (d);
				\draw[->, edge] (prstar) to (pr);
				
				\draw[->, edge] (prstar) to (pr);
				\draw[->, edge] (prubd) to (pr);
				\
				
				\draw[->, edge] (eubd) to (nonE);
				\draw[->, edge] (addM) to (nonE);
				\draw[->, edge] (nonE) to (nonM);
				
				\draw[->, edge] (covE) to (prubd);
				\draw[->, edge] (covE) to (cofM);
				\draw[->, edge] (covM) to (covE);
				
				\draw[blue,thick] (-0.5*\w,1.5*\h)--(3.5*\w,1.5*\h);
				\draw[blue,thick] (1.5*\w,-0.5*\h)--(1.5*\w,3.5*\h);
				
				\draw[blue,thick] (-0.5*\w,-0.5*\h)--(-0.5*\w,3.5*\h);
				\draw[blue,thick] (3.5*\w,-0.5*\h)--(3.5*\w,3.5*\h);
				
				\draw[blue,thick] (0.5*\w,-0.5*\h)--(0.5*\w,1.5*\h);
				\draw[blue,thick] (2.5*\w,1.5*\h)--(2.5*\w,3.5*\h);
				
				\draw[blue,thick] (-0.1*\w,1.9*\h)--(1.5*\w,1.9*\h);
				\draw[blue,thick] (-0.1*\w,2.7*\h)--(1.5*\w,2.7*\h);
				\draw[blue,thick] (-0.1*\w,1.5*\h)--(-0.1*\w,2.7*\h);
				\draw[blue,thick] (0.5*\w,2.7*\h)--(0.5*\w,3.5*\h);
				
				\draw[blue,thick] (3.1*\w,1.1*\h)--(1.5*\w,1.1*\h);
				\draw[blue,thick] (3.1*\w,0.3*\h)--(1.5*\w,0.3*\h);
				\draw[blue,thick] (3.1*\w,1.5*\h)--(3.1*\w,0.3*\h);
				\draw[blue,thick] (2.5*\w,0.3*\h)--(2.5*\w,-0.5*\h);

				\draw[->, edge] (nonE) to (nonN);
				\draw[->, edge] (covN) to (covE);

			\end{tikzpicture}
			\caption{Constellation of $\p^7_\mathrm{fin}$.}\label{fig_p7fin}
		\end{figure}
		
		Now we separate the right side of the diagram using the submodel method from \cite{GKMS}. The original method assumed eventual GCH, but thanks to an observation from Elliot Glazer (see \cite[Theorem 7.1]{BCM21} and \cite[Remark 5.18]{Mej24Vienna}), we do not need any cardinal arithmetic assumptions for the construction (except for $\theta_\cc^{\aleph_0}=\theta_\cc$) and we can state the theorem in the following way:
		\begin{thm}
			\label{thm_p7fin}
			Let $\aleph_1\leq\theta_1\leq\cdots\leq\theta_{12}$ be regular cardinals and $\theta_\cc$ an infinite cardinal with $\theta_\cc\geq\theta_{12}$ and $\theta_\cc^{\aleph_0}=\theta_\cc$.
			Then, there exists a ccc poset $\pst^7_{\mathrm{fin}}$ which forces $\bb(\R_\ind)=\theta_\ind$ and $\dd(\R_\ind)=\theta_{13-\ind}$ for each $\ind\in I^+$ (the same also holds for $\R_4^*$, $\R_5^g$ for $g\in(\omega\setminus2)^\omega$ and $\R_6^*$) and $2^{\aleph_0}=\theta_\cc$ (see Figure \ref{fig_p7fin}).
		\end{thm}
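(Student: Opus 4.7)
The plan is to apply the submodel method of \cite{GKMS}, developed for this setting in \cite[Section 4.3]{Yam24}, to the poset $\pst^7_\mathrm{mid}$ supplied by Theorem \ref{thm_p7_GCH}. First I would set $\lambda_\ind = \theta_\ind$ for $\ind = 1,\ldots,6$ (replacing by the next successor where Assumption \ref{ass_card_arith} demands a successor cardinal; this shift is routine under GCH and does not affect the final values), and take $\lambda_7$ to be the least cardinal $\geq\theta_7$ satisfying Assumption \ref{ass_card_arith}. Invoking Theorem \ref{thm_p7_GCH} then produces the ccc poset $\pst^7_\mathrm{mid}$ which forces $\R_\ind \cong_T [\lambda_7]^{<\lambda_\ind}$ for each $\ind \in I^+$, so the $\bb$-side is already arranged with $\bb(\R_\ind) = \theta_\ind$, but every dual is collapsed to $\lambda_7$.

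To separate the duals upward to $\theta_7,\theta_8,\ldots,\theta_{12}$, I would fix in the GCH ground model a $\subseteq$-increasing chain of $\sigma$-closed elementary submodels $N_6\preceq N_5\preceq\cdots\preceq N_1\preceq H_\chi$ with $|N_\ind| = \theta_{13-\ind}$, each containing $\pst^7_\mathrm{mid}$ together with all its relevant $\p_\xi^-$, ultrafilter-name sequences $\bar{D},\bar{E},\bar{F}$, linked components $\bar{Q},\bar{R},\bar{S}$, and Tukey witnesses produced in Construction \ref{con_P7}. Define $\pst^7_\mathrm{fin}$ as the restriction of $\pst^7_\mathrm{mid}$ to those conditions $p$ such that $\dom(p)\cap S_\ind \subseteq N_\ind \cap \gamma$ for each $\ind\in I^+$. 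A standard submodel argument (using ccc of $\pst^7_\mathrm{mid}$ together with the $\sigma$-closedness of each $N_\ind$, plus Fact \ref{fac_cap_V}) shows that $\pst^7_\mathrm{fin}$ is a complete ccc subforcing of $\pst^7_\mathrm{mid}$.

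After forcing with $\pst^7_\mathrm{fin}$, combining the Tukey equivalences from Theorem \ref{thm_p7_GCH} with Fact \ref{fac_cap_V} and Fact \ref{fac_suff_eq_CI_and_I} (noting that GCH guarantees $\theta_{13-\ind}^{<\theta_\ind} = \theta_{13-\ind}$) yields
\begin{equation*}
\R_\ind \cong_T [\lambda_7 \cap N_\ind]^{<\lambda_\ind} \cong_T [\theta_{13-\ind}]^{<\theta_\ind},
\end{equation*}
which gives $\bb(\R_\ind) = \theta_\ind$ and $\dd(\R_\ind) = \theta_{13-\ind}$ simultaneously for all $\ind\in I^+$; a bookkeeping argument arranges $2^{\aleph_0} = \theta_\cc$ using the outermost submodel of size $\theta_\cc$.

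The main obstacle is verifying that each lower-bound Tukey witness $C_{[\lambda_7]^{<\lambda_\ind}} \preceq_T \R_\ind$ established for $\pst^7_\mathrm{mid}$ survives restriction to $\pst^7_\mathrm{fin}$, i.e., can be chosen inside $N_\ind$. Using Fact \ref{fac_Tukey_order_equivalence_condition}\eqref{item_small_equiv}, this amounts to showing that the $\lambda_7$-sized family of reals witnessing the connection in $V^{\pst^7_\mathrm{mid}}$ restricts in $V^{\pst^7_\mathrm{fin}}$ to a $\theta_{13-\ind}$-sized family still witnessing unboundedness in $\R_\ind$. This is precisely where the $\sigma$-closedness of $N_\ind$ and the inclusion of all ultrafilter-limit data from Construction \ref{con_P7} are essential: the UF/cUF/UFI-limit witnesses used to derive the inequalities in Theorem \ref{thm_uf_limit_keeps_b_small}, Theorem \ref{thm_cUF_keep_estar_small}, and Theorem \ref{thm_UFI_keeps_nonE_small} must all be visible inside the relevant $N_\ind$ so that the corresponding smallness arguments transfer verbatim to the submodel.
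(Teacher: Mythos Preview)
Your proposal is correct and takes essentially the same approach as the paper, which simply writes ``See \cite{GKMS}'' and defers all details of the submodel method to that reference (and implicitly to \cite[Section 4.3]{Yam24}). Your sketch fleshes out what that citation entails; the only caveat is that some technical choices (e.g.\ the exact size of $\lambda_7$ and the precise form of the restriction defining $\pst^7_\mathrm{fin}$) should be taken verbatim from \cite{GKMS} rather than improvised, since the method there is somewhat more delicate than a plain domain restriction.
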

		
		\begin{proof}
			 In Glazer's method, first we take a regular cardinal $\kappa^*$ and a set $W$ of ordinals coding $V_{\kappa^*}$, and work inside the inner model $L[W]$. Since eventual GCH holds in $L[W]$, we can construct $\pst_\mathrm{mid}^7$ by Theorem \ref{thm_p7fin} (for appropriate $\lambda_i$'s). By using the method from \cite{GKMS}, we can take a submodel $N\preccurlyeq H_\chi$ ($\chi$ is a sufficiently large regular cardinal) such that the poset $\pst^7_{\mathrm{fin}}\coloneqq\pst_\mathrm{mid}^7\cap N$ forces the constellation of Figure \ref{fig_p7fin} and here we use eventual GCH in $L[W]$ to construct such $N$. Finally, we go back to the ground model $V$ and conclude that $\pst^7_{\mathrm{fin}}$ actually forces the same constellation in $V$ as well. For the details, see \cite{GKMS}, \cite[Theorem 7.1]{BCM21} and \cite[Remark 5.18]{Mej24Vienna}.
		\end{proof}
		
		\begin{rem}
			In \cite{BCM21} and \cite{Mej24Vienna}, $\kappa^*$ was supposed to be large enough, but through a private communication with Glazer we found that $\kappa^*$ can be just $\theta_{\mathfrak{c}}^+$.  
		\end{rem}

		\section{Question and Discussion}\label{sec_question}
		\begin{que}
			What about ``closed-UFI-limits''?
		\end{que}
		As in the case of UF-limits, let us define that $(D,\varepsilon)$-lim-linked $Q$ (witnessed by $\lim^{D,\varepsilon}\colon Q\to\p)$ is closed if $\ran(\lim^{D,\varepsilon})\subseteq Q$.
		In the case of $\lebb$, each $(D,\frac{1}{|d|})$-lim-linked component $Q_{d,s,k}$ for a long $d$ is not necessarily closed, since we only know that $\ran(\lim^{D,\varepsilon})\subseteq Q_{d,s,k+1}$, but not $\subseteq Q_{d,s,k}$, though $Q_{d,s,k}$ is closed-UF-lim-linked.
		That is, in the case of $\lebb$, UF-limits are closed but pseudo-fusions are not.
		We have no idea whether there is a forcing notion which has (non-trivial) pseudo-fusions that are closed.
		Also, even if there is a forcing notion with closed-UFI-limits, we are not sure if it has an essentially new application to controlling cardinal invariants.
		\begin{que}
			Does $\lebf_b$ behave differently depending on $b\in\oo$?
		\end{que}
		$\lebf_b$ is similar to $\prbf_g$ in the sense that in both relational systems responses are trying to guess the values of challenges on their infinite set $D\in\ooo$,
		so let us take a look at $\prbf_g$.
		Its bounding number $\bb(\prbf_g)=\ee_g$ is known to depend on $g\in\oo$: Brendle showed in \cite{Bre95} that in the Mathias model $\ee_{ubd}<\ee_2$ holds ($2$ denotes the constant function of the value $2$ on $\omega$), and Spinas proved that if $f,g\in\oo$ are sufficiently different then $\ee_f<\ee_g$ consistently holds (\cite[Theorem 1.9]{Spi98}). However, at this moment, we are not sure if similar arguments work in the case of $\lebf_b$.

		\begin{que}
			Can we separate more numbers in Figure \ref{fig_p7fin}?
		\end{que}
		Focusing on the left side, the remaining parts are the separation of:
		\begin{enumerate}
			\item\label{item_que_sep_one} $\ee$ and $\ee^*$, and 
			\item\label{item_que_sep_two} $\ee_{ubd}$ and $\none$.
		\end{enumerate}
		The first item \eqref{item_que_sep_one} was also considered in \cite[Question 5.3]{Yam25}, and we are still not sure even on the consistency of $\max\{\ee,\bb\}<\ee^*$. On the second item  \eqref{item_que_sep_two}, $\ee_{ubd}<\none$ holds in the Hechler model: Brendle (essentially) showed in \cite{Bre95} that $\ee_{ubd}\leq\ee_{\mathrm{id}}=\aleph_1$ in the Hechler model ($\mathrm{id}$ denotes the identity function on $\omega$), 
		and in that model $\none\geq\adde=\addm=\min\{\bb,\covm\}=2^{\aleph_0}$ holds. 
		However, we have no idea on if we can force $\ee_{ubd}<\none$ in addition to the separation of Figure \ref{fig_p7fin}.
		
		\begin{acknowledgements}
			The author thanks his supervisor J\"{o}rg Brendle for his supportive advice throughout this work. The author is also grateful to Diego~A. Mej\'{\i}a and Miguel~A. Cardona for their helpful comments, particularly on the background of $\none$ and of the limit methods in Section \ref{sec_intro} and on the theory of UFI-limits in Section \ref{sec_UFI}. The author is thankful to the anonymous referee for his/her corrections and suggestions. This work was supported by JST SPRING, Japan Grant Number JPMJSP2148 and JSPS KAKENHI Grant Number JP25KJ1818.
		\end{acknowledgements}
		\normalsize


\begin{thebibliography}{GKMS21}
			
			
			\normalsize
			\baselineskip=17pt
			
			
			\bibitem[BCM21]{BCM21}
			J\"{o}rg Brendle, Miguel~A. Cardona, and Diego~A. Mej\'{\i}a. Filter-linkedness and its effect on preservation of cardinal
			characteristics. {\em Annals of Pure and Applied Logic}, 172(1):Paper No. 102856, 30, 2021.
			
			
			
			\bibitem[BL11]{BL11}
			J\"{o}rg Brendle and Benedikt L{\"o}we. Eventually different functions and inaccessible cardinals. {\em Journal of the Mathematical Society of Japan}, 63(1):137--151, 2011.
			
			
			
			
			\bibitem[Bre91]{Bre91}
			J\"{o}rg Brendle. Larger cardinals in Cicho{\'n}'s diagram. {\em The Journal of Symbolic Logic}, 56(3):795--810, 1991.
			
			\bibitem[Bre95]{Bre95}
			J\"{o}rg Brendle. Evasion and prediction-the Specker phenomenon and Gross spaces. {\em Forum Mathematicum}, 7(Jahresband), 513-542, 1995.
			
			
			\bibitem[BS92]{BS92}
			Tomek Bartoszy\'{n}ski and Saharon Shelah.
			Closed measure zero sets.
			{\em Annals of Pure and Applied Logic},
			58(2):93--110, 1992.
			
			\bibitem[BS96]{BS_E_and_P_2}
			J\"{o}rg Brendle and Saharon Shelah. Evasion and prediction {I}{I}.
			{\em Journal of the London Mathematical Society}, 53(1):19--27, 1996.
			
			\bibitem[Car23]{Car23}
			Miguel~A. Cardona. A friendly iteration forcing that the four cardinal characteristics of $\mathcal{E}$ can be pairwise different. {\em Colloquium Mathematicum}, 173:123--157, 2023.
			
			
			\bibitem[Car24]{Car24}
			Miguel~A. Cardona. The cardinal characteristics of the ideal generated by the $F_\sigma$ measure zero subsets of the reals. Ky\={o}to Daigaku S\={u}rikaiseki Kenky\={u}sho K\={o}ky\={u}roku 2290:18--42, 2024.
			
			
			\bibitem[CM19]{CM19}
			Miguel~A. Cardona and Diego~A. Mej\'{\i}a. On cardinal characteristics of Yorioka ideals. {\em Mathematical Logic Quarterly}, 65(2):170--199, 2019.
			
			
			%
			\bibitem[CM22]{forcing_constellations}
			Miguel~A. Cardona and Diego~A. Mej\'{\i}a. Forcing constellations of {C}icho\'{n}'s diagram by using the {T}ukey order. Ky\={o}to Daigaku S\={u}rikaiseki Kenky\={u}sho K\={o}ky\={u}roku, 2213:14--17, 2022.
			
			\bibitem[CM23]{CM23}
			Miguel~A Cardona and Diego~A Mej{\i}a.
			\newblock Localization and anti-localization cardinals.
			\newblock {\em Kyoto Daigaku Surikaiseki Kenkyusho Kokyuroku}, 2261:47--77,
			2023.
			
			
			\bibitem[CM24]{CM24}
			Miguel~A. Cardona and Diego~A. Mej\'{\i}a.
			More about the cofinality and the covering of the ideal of strong measure zero sets.
			{\em Annals of Pure and Applied Logic}, 176(4):103537, 2024.
			
			\bibitem[CMU24]{CMU24}
			Miguel~A. Cardona, Diego~A. Mej\'{\i}a, and Andr{\'e}s Felipe Uribe-Zapata.
			A general theory of iterated forcing using finitely additive measures.
			Preprint, {\em arXiv:2406.09978}, 2024.
			
			
			
			
			
			%
			\bibitem[EK65]{EK65}
			Ryszard Engelking and Monika Kar{\l}owicz. Some theorems of set theory and their topological consequences. {\em Fundamenta Mathematicae}, 57(3):275--285, 1965.
			
			%
			\bibitem[GKMS21]{GKMS21}
			Martin Goldstern, Jakob Kellner, Diego~A. Mej\'{\i}a, and Saharon Shelah.
			\newblock Adding the evasion number to Cicho\'n's Maximum.
			\newblock XVI International Luminy Workshop in Set Theory,
			\newblock \url{https://dmg.tuwien.ac.at/kellner/2021_Luminy_talk.pdf}, \newblock 2021.
			%
			\bibitem[GKMS22]{GKMS}
			Martin Goldstern, Jakob Kellner, Diego~A. Mej\'{\i}a, and Saharon Shelah. Cicho\'{n}'s maximum without large cardinals. {\em J. Eur. Math. Soc. (JEMS)}, 24(11):3951--3967, 2022.
			%
			\bibitem[GKS19]{GKS}
			Martin Goldstern, Jakob Kellner, and Saharon Shelah. Cicho\'{n}'s maximum.
			{\em Annals of Mathematics (2)}, 190(1):113--143, 2019.
			%
			\bibitem[GMS16]{GMS16}
			Martin Goldstern, Diego~A. Mej\'{\i}a, and Saharon Shelah.
			The left side of Cicho\'{n}'s diagram.
			{\em Proceedings of the American Mathematical Society}, 144(9):4025--4042, 2016.
			%
			\bibitem[JS90]{JS90}
			Haim Judah and Saharon Shelah. The Kunen-Miller chart (Lebesgue measure, the Baire property, Laver reals and preservation theorems for forcing).
			{\em The Journal of Symbolic Logic}, 55(3):909--927, 1990.
			%
			
			
			
			%
			\bibitem[KST19]{KST19}
			Jakob Kellner, Saharon Shelah, and Anda Ramona T{\u{a}}nasie.
			Another ordering of the ten cardinal characteristics in {C}icho\'{n}'s diagram.
			{\em Commentationes Mathematicae Universitatis Carolinae}, 60(1):61--95, 2019.
			
			%
			\bibitem[Mej19]{Mej19}
			Diego~A. Mej\'{\i}a
			Matrix iterations with vertical support restrictions.
			In {\em InProceedings of the 14th and 15th Asian Logic Conferences}, pages 213–248. World Sci. Publ., Hackensack, NJ, 2019.
			
			
			\bibitem[Mej24a]{Mej24}
			Diego~A. Mej\'{\i}a. Anatomy of $\tilde{\mathbb{E}}$. Ky\={o}to Daigaku S\={u}rikaiseki Kenky\={u}sho K\={o}ky\={u}roku 2290:43--61, 2024.
			
			\bibitem[Mej24b]{Mej24Vienna}
			Diego~A. Mej\'{\i}a. Forcing techniques for Cicho\'n's Maximum: Lecture notes for the mini-course at the University of Vienna. Preprint, {\em arXiv:2402.11852}, 2024.
			
			
			
			\bibitem[She00]{She00}
			Saharon Shelah.
			Covering of the null ideal may have countable cofinality.
			{\em Fundamenta Mathematicae}, 166(1-2):109--136, 2000.
			
			\bibitem[Spi98]{Spi98}
			Otmar Spinas.
			Evading predictors with creatures.
			{\em Proceedings of the American Mathematical Society}, 126(7):2103--2115, 1998.
			
			
			
			
			
			
			%
			\bibitem[Yam25]{Yam25}
			Takashi Yamazoe. Cicho\'n's maximum with evasion number. {\em The Journal of Symbolic Logic}, 1-31, published online, 2025.
			
		\end{thebibliography}
	\end{document}